\documentclass[english]{article}
\usepackage[T1]{fontenc}
\usepackage[utf8]{inputenc}
\usepackage{babel}
\usepackage{float}
\usepackage{units}
\usepackage{dsfont}
\usepackage{amsmath}
\usepackage{amsthm}
\usepackage{amssymb}
\usepackage[numbers]{natbib}
\usepackage[pdfusetitle,
 bookmarks=true,bookmarksnumbered=false,bookmarksopen=false,
 breaklinks=false,pdfborder={0 0 1},backref=false,colorlinks=false]
 {hyperref}

\makeatletter

\floatstyle{ruled}
\newfloat{algorithm}{tbp}{loa}
\providecommand{\algorithmname}{Algorithm}
\floatname{algorithm}{\protect\algorithmname}

\theoremstyle{plain}
\newtheorem{assumption}{\protect\assumptionname}[section]
\theoremstyle{remark}
\newtheorem{rem}{\protect\remarkname}[section]
\theoremstyle{plain}
\newtheorem{thm}{\protect\theoremname}[section]
\theoremstyle{plain}
\newtheorem{lem}{\protect\lemmaname}[section]

\@ifundefined{date}{}{\date{}}
\usepackage[paperwidth=8.5in, paperheight=11in, margin=1in]{geometry}
\usepackage{enumitem}
\usepackage{algorithmic}

\makeatother

\providecommand{\assumptionname}{Assumption}
\providecommand{\lemmaname}{Lemma}
\providecommand{\remarkname}{Remark}
\providecommand{\theoremname}{Theorem}

\begin{document}
\global\long\def\E{\mathbb{E}}%
\global\long\def\F{\mathcal{F}}%
\global\long\def\N{\mathbb{N}}%
\global\long\def\R{\mathbb{R}}%
\global\long\def\X{\boldsymbol{X}}%
\global\long\def\bg{\boldsymbol{g}}%
\global\long\def\bv{\boldsymbol{v}}%
\global\long\def\bx{\boldsymbol{x}}%
\global\long\def\by{\boldsymbol{y}}%
\global\long\def\bz{\boldsymbol{z}}%
\global\long\def\sx{x}%
\global\long\def\sy{y}%
\global\long\def\sz{z}%
\global\long\def\p{p}%
\global\long\def\bxi{\boldsymbol{\xi}}%
\global\long\def\bzero{\boldsymbol{0}}%
\global\long\def\defeq{\triangleq}%
\global\long\def\argmin{\mathrm{argmin}}%
\global\long\def\dom{\mathrm{dom}}%
\global\long\def\interior{\mathrm{int}}%
\global\long\def\mydots{\dots}%
\global\long\def\d{\mathrm{d}}%
\global\long\def\Breg{\mathrm{D}}%
\global\long\def\SMD{\textsf{SMD}}%
\global\long\def\ASMD{\textsf{ASMD}}%
\global\long\def\SGD{\textsf{SGD}}%
\global\long\def\SGDM{\textsf{SGDM}}%
\global\long\def\NSGD{\textsf{NSGD}}%
\global\long\def\AdaGrad{\textsf{AdaGrad}}%
\global\long\def\AdaGradNorm{\textsf{AdaGrad-Norm}}%
\global\long\def\Adam{\textsf{Adam}}%
\global\long\def\Mini{\textsf{Mini-batch SGD}}%
\global\long\def\metric{\textsf{METRIC}}%
\global\long\def\rate{\textsf{RATE}}%

\title{In-Expectation Convergence of Stochastic Gradient Methods under Heavy-Tailed
Noise}
\author{Zijian Liu\thanks{Stern School of Business, New York University, zl3067@stern.nyu.edu.}}
\maketitle
\begin{abstract}
Many stochastic gradient methods are believed not to converge when
the noise in stochastic gradients has only a finite $p$-th moment
for $p\in\left(1,2\right)$, a setting known as the heavy-tailed noise
assumption. However, some recent studies have found that Stochastic
Gradient Descent ($\textsf{SGD}$), without any modification to its
update rule, can surprisingly converge in expectation for convex problems
with bounded domains, highlighting the potential of classical stochastic
gradient methods. Inspired by this recent progress, we provide a comprehensive
study of stochastic optimization under heavy-tailed noise and establish
new in-expectation convergence results for Stochastic Mirror Descent
($\textsf{SMD}$) and Accelerated Stochastic Mirror Descent ($\textsf{ASMD}$)
in convex optimization, and for $\textsf{SGD}$ and Stochastic Gradient
Descent with Momentum ($\textsf{SGDM}$) in nonconvex optimization.
Notably, our results not only hold without algorithmic changes but
also avoid restrictive assumptions, such as bounded domains, imposed
in prior work. More importantly, our analysis provides a new, elegant,
and powerful framework for studying heavy-tailed stochastic optimization,
opening a new route to understanding first-order stochastic gradient
methods.
\end{abstract}

\section{Introduction\label{sec:introduction}}

The heavy-tailed noise condition, which means that the noise in stochastic
gradients has only a finite $p$-th moment for $p\in\left(1,2\right]$,
was introduced in the pioneering works \citep{pmlr-v97-simsekli19a,NEURIPS2020_b05b57f6},
and has been observed in different areas of machine learning, including
deep learning \citep{ahn2024linear,pmlr-v238-battash24a} and reinforcement
learning \citep{pmlr-v139-garg21b}. However, under heavy-tailed noise,
many stochastic gradient methods, such as the classical Stochastic
Gradient Descent ($\SGD$) \citep{10.1214/aoms/1177729586} algorithm,
are believed to fail to converge when $\p\in\left(1,2\right)$. For
instance, \citep{NEURIPS2020_b05b57f6} provides a simple example
in which $\E[\left\Vert \nabla F(\bx_{t})\right\Vert _{2}^{2}]$ can
be $+\infty$ at an arbitrary point $\bx_{t}$ in the optimization
trajectory, where $F$ denotes the objective function. Motivated by
this issue, various techniques have been proposed to address the non-convergence
phenomenon in stochastic gradient methods. In particular, two mechanisms,
namely gradient clipping \citep{pmlr-v28-pascanu13,NEURIPS2020_b05b57f6,cutkosky2021high,NEURIPS2022_349956de,liu2023stochastic,pmlr-v195-liu23c,NEURIPS2023_4c454d34,pmlr-v202-sadiev23a,pmlr-v235-liu24bo,pmlr-v235-gorbunov24a,parletta2024improved,pmlr-v238-puchkin24a,pmlr-v258-armacki25a}
and gradient normalization \citep{nesterov1984minimization,pmlr-v258-hubler25a,liu2025nonconvex,JMLR:v26:24-1991},
are known to handle heavy-tailed noise both in theory and practice.
However, to the best of our knowledge, existing mechanisms for handling
heavy-tailed noise inevitably introduce additional tuning parameters
(e.g., the clipping parameter in gradient clipping) and incur extra
computational costs (e.g., computing the norm of the stochastic gradient). 

Although, as discussed above, many stochastic gradient methods are
expected to fail to converge in expectation, three insightful works
\citep{pmlr-v178-vural22a,fatkhullin2025can,liu2025online} have challenged
this popular view and obtained interesting results in both convex
and nonconvex optimization through the study of two classical first-order
methods, $\SGD$ and Stochastic Mirror Descent ($\SMD$) \citep{nemirovskij1983problem,BECK2003167}.

\paragraph{Convex optimization.}

All three works, \citep{pmlr-v178-vural22a,fatkhullin2025can,liu2025online},
provide new results for convex optimization, which we discuss below.
Note that \citep{fatkhullin2025can} and \citep{liu2025online} also
investigate strongly convex optimization. To save space, we defer
the discussion of the strongly convex case to Section \ref{sec:cvx}.

\begin{itemize}[leftmargin=*]

\item \citep{pmlr-v178-vural22a} studies $\SMD$ under heavy-tailed
stochastic gradients rather than heavy-tailed noise and shows that,
for Lipschitz objectives, the average iterate of $\SMD$ converges
at the optimal rate $1/T^{1-\frac{1}{\p}}$ after $T$ iterations,
provided that the mirror map $\psi$ is $\frac{\p}{\p-1}$-uniformly
convex with respect to the underlying general norm $\left\Vert \cdot\right\Vert $
(which needs not be $\left\Vert \cdot\right\Vert _{2}$) \citep{ZALINESCU1983344,AFST_1995_6_4_4_705_0}.
Later, \citep{liu2024revisiting} extends the idea to the standard
heavy-tailed noise setting and establishes the same optimal rate (including
for the last iterate) for composite convex optimization over a broader
class of objective functions satisfying $(L,M)$-smoothness (see Equation
(4.2.2) in \citep{lan2020first}).

However, a known issue is that even in the case $\left\Vert \cdot\right\Vert =\left\Vert \cdot\right\Vert _{2}$,
the $\SMD$ algorithm in \citep{pmlr-v178-vural22a} cannot recover
the standard $\SGD$ method due to the requirement on $\psi$. Moreover,
the corresponding update rule is complicated compared with standard
$\SGD$ (see Section 3.1 of \citep{pmlr-v178-vural22a}). More critically,
the implementation of $\SMD$ in \citep{pmlr-v178-vural22a} requires
the exact value of the tail index $\p$, making the algorithm hard
to tune, since even estimating $\p$ is known to be difficult in practice.

\item \citep{fatkhullin2025can} studies the standard projected $\SGD$
method and establishes the optimal $1/T^{1-\frac{1}{\p}}$ rate in
expectation. Moreover, the authors also investigate high-probability
lower bounds, providing further evidence for the intrinsic difficulty
of stochastic optimization under heavy-tailed noise.

Although \citep{fatkhullin2025can} provides results for any closed
convex set $\X\subseteq\R^{d}$, it does not yield a practical output
unless $\X$ is compact. In addition, \citep{fatkhullin2025can} also
considers heavy-tailed stochastic gradients, which implicitly requires
the objective to be Lipschitz and hence restricts its applicability.

\item \citep{liu2025online} studies a broader problem, online convex
optimization with heavy tails, for different classical algorithms.
In particular, the technique of \citep{liu2025online} can be applied
to stochastic convex optimization, also yielding an optimal $1/T^{1-\frac{1}{\p}}$
rate in expectation for both the average and last iterates of projected
$\SGD$ for $(L,M)$-smooth functions. Moreover, as pointed out in
Remarks 1 and 3 of \citep{liu2025online}, the results in \citep{liu2025online}
can be provably extended to $\SMD$ for composite objectives.

However, as discussed in \citep{liu2025online}, the analysis developed
there critically relies on the boundedness of the domain $\X$. This
compactness requirement can be restrictive in stochastic convex optimization,
since many standard constrained, unconstrained, and regularized problems
naturally involve unbounded feasible regions. Therefore, removing
this requirement is important for extending the applicability of such
results to more general and practical settings.

\end{itemize}

\paragraph{Nonconvex optimization.}

Both \citep{fatkhullin2025can} and \citep{liu2025online} study nonconvex
optimization as well. However, we limit our attention to \citep{fatkhullin2025can},
since the algorithm developed in \citep{liu2025online} contains additional
operations that go far beyond standard $\SGD$.

More precisely, under the assumption of heavy-tailed stochastic gradients,
\citep{fatkhullin2025can} shows that $\E[\sum_{t=1}^{T}\left\Vert \nabla F(\bx_{t})\right\Vert _{2}^{2}/T]$
converges at the rate of $1/T^{1-\frac{1}{\p}}$, but under the unusual
condition that the objective $F$ is $\p$-H\"{o}lder smooth. In
this same setting, they also show a lower sample complexity bound
specific to the $\SGD$ algorithm for finding stationary points of
convex objectives.

Under heavy-tailed noise and when $F$ is smooth in the classical
sense, i.e., $\nabla F$ is Lipschitz, they show that $\Mini$ guarantees
$\E[\sum_{t=1}^{T}\left\Vert \nabla F(\bx_{t})\right\Vert _{2}^{\p}/T]\leq\varepsilon^{\p}$
using at most $(1/\varepsilon)^{\frac{2\p}{\p-1}}$ stochastic gradients.
Note that their convergence metric is weaker than the commonly considered
$\E[\sum_{t=1}^{T}\left\Vert \nabla F(\bx_{t})\right\Vert _{2}^{2}/T]$
in the literature, and therefore does not contradict \citep{NEURIPS2020_b05b57f6}.
However, as explicitly discussed at the end of Section 5.3 in \citep{fatkhullin2025can},
their analysis cannot be applied to standard $\SGD$ without gradient
batching and thus leaves open an unsolved problem for $\SGD$.

Taken together, these results suggest that classical first-order methods
such as $\SMD$ and $\SGD$ can still guarantee convergence without
algorithmic changes. However, as mentioned previously, existing results
often require restrictive assumptions and hence limit their value.
Naturally, one may ask:
\begin{center}
\textit{Can classical stochastic gradient methods converge in both
convex and nonconvex optimization under heavy-tailed noise, without
imposing additional restrictive assumptions?}
\par\end{center}

\paragraph{Our contributions.}

The key finding of this work is that, under a properly chosen convergence
criterion, two classical first-order stochastic gradient methods,
$\SMD$ and $\SGD$, converge in expectation without imposing additional
restrictive assumptions, answering the above question affirmatively. 

Concretely, inspired by existing works (e.g., \citep{fatkhullin2025can}
for strongly convex optimization), let $\metric$ denote one of the
commonly adopted convergence metrics, such as $F(\bar{\bx}_{T+1})-F_{\star}$
and $F(\bx_{T+1})-F_{\star}$ for (strongly) convex optimization (where
$\bar{\bx}_{T+1}$ denotes an average iterate over the optimization
trajectory up to time $T+1$ and $F_{\star}$ denotes the optimal
function value), and $\sum_{t=1}^{T}\left\Vert \nabla F(\bx_{t})\right\Vert _{2}^{2}/T$
for nonconvex optimization. We show that $\E[\left(\metric\right)^{\frac{\p}{2}}]$
converges at a rate of the form $\left(\rate\right)^{\frac{\p}{2}}$.

More specifically, under the standard heavy-tailed noise condition,
we establish the following results:

\begin{itemize}[leftmargin=*]

\item For convex optimization, $\SMD$ converges at $\rate=1/T^{1-\frac{1}{\p}}$
for composite objectives over a general function class satisfying
the $(L,M)$-relative smoothness condition (see Assumption \ref{assu:SMD}).
Notably, our results hold for both the average and last iterates,
require only that the mirror map $\psi$ satisfies the standard $1$-strong
convexity condition, and apply to any domain that is not necessarily
compact. Under an additional Bregman growth condition (see Assumption
\ref{assu:SMD}), we obtain an improved bound with $\rate=1/T^{2-\frac{2}{\p}}$.
Additionally, we extend our results to Accelerated Stochastic Mirror
Descent ($\ASMD$), obtaining a better rate with a higher-order term
that decays faster than that of $\SMD$. We also show that both $\SMD$
and $\ASMD$ guarantee convergence even when the value of $\p$ is
unknown in advance, provided that $\p\in\left(\nicefrac{4}{3},2\right]$.

\item For nonconvex optimization, $\SGD$ converges at $\rate=1/T^{1-\frac{1}{\p}}$
for $L$-upper smooth functions (see Assumption \ref{assu:SGD}).
Notably, our results do not require gradient batching, thereby resolving
the problem left by \citep{fatkhullin2025can}, and they also yield
a better sample complexity bound than the one in \citep{fatkhullin2025can}.
Under an additional condition, we show that $\SGD$ guarantees convergence
in the form of $\E\left[\metric\right]\leq\rate$, removing the previously
mentioned $\frac{\p}{2}$ exponent. Furthermore, we extend our results
to Stochastic Gradient Descent with Momentum ($\SGDM$), establishing
its first convergence bound under heavy-tailed noise. We also show
that both $\SGD$ and $\SGDM$ guarantee convergence even when the
value of $\p$ is unknown in advance, provided that $\p\in\left(\nicefrac{4}{3},2\right]$.

\end{itemize}

More importantly, our analysis provides a new, elegant, and powerful
framework for studying stochastic optimization under the heavy-tailed
noise assumption, opening a new route to understanding first-order
stochastic gradient methods.

\subsection{Related Work\label{subsec:related-work}}

A large body of work has studied heavy-tailed noise and produced a
rich literature. In particular, two mechanisms, namely gradient clipping
\citep{pmlr-v28-pascanu13,NEURIPS2020_b05b57f6,cutkosky2021high,NEURIPS2022_349956de,liu2023stochastic,pmlr-v195-liu23c,NEURIPS2023_4c454d34,pmlr-v202-sadiev23a,pmlr-v235-liu24bo,pmlr-v235-gorbunov24a,parletta2024improved,pmlr-v238-puchkin24a,pmlr-v258-armacki25a}
and gradient normalization \citep{nesterov1984minimization,NEURIPS2020_b05b57f6,pmlr-v258-hubler25a,liu2025nonconvex,JMLR:v26:24-1991},
are known to handle heavy-tailed noise both in theory and practice.
However, this work focuses on the original $\SMD$ and $\SGD$ methods
without algorithmic modifications. Therefore, \citep{pmlr-v178-vural22a,fatkhullin2025can,liu2025online}
are the three most related works as discussed above. In addition,
we would like to mention one additional work \citep{NEURIPS2021_9cdf2656},
which also studies vanilla $\SGD$ for convex optimization under heavy-tailed
noise, but under a rare condition called the $\p$-positive definite
Hessian assumption (see Definition 1 and Assumption 1 in \citep{NEURIPS2021_9cdf2656}).
To keep this work focused on the standard setting, we do not discuss
it further.

We also summarize the existing minimax in-expectation lower bounds
in different settings. For Lipschitz convex and strongly convex optimization,
we have $\E\left[F(\bx_{T+1})-F_{\star}\right]\gtrsim1/T^{1-\frac{1}{\p}}$
\citep{nemirovskij1983problem,pmlr-v178-vural22a,liu2026clipped}
and $\E\left[F(\bx_{T+1})-F_{\star}\right]\gtrsim1/T^{2-\frac{2}{\p}}$
\citep{liu2026clipped}, respectively. To the best of our knowledge,
there is no known lower bound for smooth convex optimization. As for
smooth strongly convex optimization, we have $\E\left[F(\bx_{T+1})-F_{\star}\right]\gtrsim1/T^{2-\frac{2}{\p}}$
as proved in \citep{NEURIPS2020_b05b57f6} but under the assumption
of heavy-tailed stochastic gradients. For smooth nonconvex optimization,
we have $\E\left[\sum_{t=1}^{T}\left\Vert \nabla F(\bx_{t})\right\Vert _{2}/T\right]\gtrsim1/T^{\frac{\p-1}{3\p-2}}$
\citep{NEURIPS2020_b05b57f6,liu2025nonconvex}. \citep{fatkhullin2025can}
also shows an algorithm-dependent lower bound for $\SGD$ to find
stationary points (see Theorem 5.5 in \citep{fatkhullin2025can} for
details).

\section{Preliminary}

\paragraph{Notation.}

$\N$ is the set of natural numbers excluding $0$. For any $T\in\N$,
we denote $\left[T\right]\defeq\left\{ 1,\mydots,T\right\} $. We
write $a\land b\defeq\min\left\{ a,b\right\} $. $\left\lceil \cdot\right\rceil $
and $\left\lfloor \cdot\right\rfloor $ are the ceiling and floor
functions, respectively. $\left\langle \cdot,\cdot\right\rangle $
denotes the standard Euclidean inner product on $\R^{d}$. $\left\Vert \cdot\right\Vert $
is a general norm on $\R^{d}$, and $\left\Vert \cdot\right\Vert _{\star}$
represents its dual norm, i.e., $\left\Vert \cdot\right\Vert _{\star}\defeq\sup_{\bv\in\R^{d}:\left\Vert \bv\right\Vert =1}\left\langle \cdot,\bv\right\rangle $.
For a set $\X\subseteq\R^{d}$, $\interior\X$ denotes the interior
of $\X$, and $\iota_{\X}$ is the indicator function of $\X$ (i.e.,
$\iota_{\X}(\bx)=0$ if $\bx\in\X$ and $\iota_{\X}(\bx)=+\infty$,
otherwise). Given a function $h:\R^{d}\to\R\cup\left\{ +\infty\right\} $,
$\dom h\defeq\left\{ \bx\in\R^{d}:h(\bx)<+\infty\right\} $ denotes
its domain, and $\partial h(\by)$ stands for the subdifferential
of $h$ at $\by\in\R^{d}$. Moreover, we write $\Breg_{h}(\bx,\by)\defeq h(\bx)-h(\by)-\left\langle \nabla h(\by),\bx-\by\right\rangle ,\forall\bx\in\R^{d},\by\in\dom h$,
where $\nabla h(\by)$ is the gradient of $h$ at $\by$ when $h$
is differentiable, or an arbitrary element of $\partial h(\by)$ when
$h$ is non-differentiable and $\partial h(\by)$ is nonempty.

\paragraph{Assumption.}

Our analysis relies on Assumption \ref{assu:oracle} below.
\begin{assumption}
\label{assu:oracle}We make the following assumption on the stochastic
gradients:

\begin{itemize}[leftmargin=*]

\item For a fixed function $h$, given a point $\bx_{t}$ at the
$t$-th iteration, one can query a stochastic gradient $\bg_{t}$
satisfying $\E_{t-1}\left[\bg_{t}\right]=\nabla h(\bx_{t})$, where
$\E_{t}\left[\cdot\right]\defeq\E\left[\cdot\mid\F_{t}\right]$ denotes
conditional expectation and $\F_{t}\defeq\sigma\left(\bg_{1},\mydots,\bg_{t}\right)$
is the natural filtration.

\item The stochastic noise $\bxi_{t}\defeq\bg_{t}-\nabla h(\bx_{t})\in\F_{t}$
satisfies $\E_{t-1}\left[\left\Vert \bxi_{t}\right\Vert _{\star}^{\p}\right]\leq\sigma^{\p}$,
where $\p\in\left(1,2\right]$ is the tail index and $\sigma\geq0$
denotes the noise level.

\end{itemize}
\end{assumption}
The first requirement in Assumption \ref{assu:oracle} is known as
unbiasedness and is standard in stochastic optimization \citep{nemirovskij1983problem,Solr-KOHA-OAI-TEST:19722,doi:10.1137/16M1080173,lan2020first}.
The second condition in Assumption \ref{assu:oracle} is commonly
referred to as the heavy-tailed noise condition, which was introduced
in the pioneering works \citep{pmlr-v97-simsekli19a,NEURIPS2020_b05b57f6}.

\section{Convex Optimization\label{sec:cvx}}

In this section, we study convex optimization. Before proceeding,
we introduce some preliminaries.

\paragraph{Objective.}

We are interested in minimizing an objective in the form of $F\defeq f+r$
over $\R^{d}$, where $f:\R^{d}\to\R$ and $r:\R^{d}\to\R\cup\left\{ +\infty\right\} $.
We remark that $\dom F=\dom r$ in this setting.

\paragraph{Mirror map.}

There exists a function $\psi:\R^{d}\to\R\cup\left\{ +\infty\right\} $
such that $\psi$ is strictly convex and differentiable on $\interior\dom\psi$.
Moreover, we assume that $\dom F\subseteq\interior\dom\psi$, and
that $\psi$ is $1$-strongly convex with respect to $\left\Vert \cdot\right\Vert $
on $\dom F$, i.e., $\Breg_{\psi}(\bx,\by)\geq\frac{1}{2}\left\Vert \bx-\by\right\Vert ^{2},\forall\bx,\by\in\dom F$.
\begin{rem}
Alternative sets of conditions for the mirror map can be found in,
for example, \citep{doi:10.1137/S0363012995281742}, Section 6.3 of
\citep{orabona2019modern}, and Section 3.2 of \citep{lan2020first}.
\end{rem}

\paragraph{Convergence metric.}

For convex optimization, as mentioned before and as will be seen later,
we prove our convergence rates in the form of $\E\left[\left(\metric\right)^{\frac{\p}{2}}\right]\lesssim\left(\rate\right)^{\frac{\p}{2}}$
(up to possible logarithmic factors), where $\metric$ could be, for
example, the standard function value gap evaluated at a weighted average
iterate $\bar{\bx}_{T+1}$. Certainly, such results are weaker than
bounds of the form $\E\left[\metric\right]\leq\rate$ provided for
some results in the three most relevant works \citep{pmlr-v178-vural22a,fatkhullin2025can,liu2025online},
because $\E\left[\left(\metric\right)^{\frac{\p}{2}}\right]\leq\left(\E\left[\metric\right]\right)^{\frac{\p}{2}}$
holds by H\"{o}lder's inequality since $\frac{\p}{2}\leq1$ . However,
when discussing rates and comparing our results with existing ones,
we focus on the $\rate$ term and ignore its exponent $\frac{\p}{2}$
in our bound to avoid unnecessary complexity.

\subsection{Convergence of Stochastic Mirror Descent}

\begin{algorithm}[H]
\caption{\label{alg:SMD}Stochastic Mirror Descent ($\protect\SMD$)}

\begin{algorithmic}[1]

\STATE\textbf{Input:} initial point $\bx_{1}\in\dom F$, stepsize
$\eta_{t}>0$, mirror map $\psi$

\FOR{$t=1$ \textbf{to} $T$}

\STATE$\bx_{t+1}=\argmin_{\bx\in\R^{d}}r(\bx)+\left\langle \bg_{t},\bx\right\rangle +\frac{\Breg_{\psi}(\bx,\bx_{t})}{\eta_{t}}$

\ENDFOR

\end{algorithmic}
\end{algorithm}

\begin{rem}
As one may observe, Algorithm \ref{alg:SMD} in fact coincides with
the composite version of $\SMD$ proposed in \citep{duchi2010composite}.
However, for simplicity, we call it $\SMD$.
\end{rem}
We consider the classical Stochastic Mirror Descent ($\SMD$) method
\citep{nemirovskij1983problem,BECK2003167}, presented in Algorithm
\ref{alg:SMD}. When $\left\Vert \cdot\right\Vert =\left\Vert \cdot\right\Vert _{2}$,
Algorithm \ref{alg:SMD} recovers the classical $\SGD$ method (with
a proximal update) by taking $\psi=\frac{1}{2}\left\Vert \cdot\right\Vert ^{2}$.

\paragraph{Assumption.}

To analyze $\SMD$ for convex optimization, we require Assumption
\ref{assu:SMD} below.
\begin{assumption}
\label{assu:SMD}We make the following assumptions on the objective
function $F$:

\begin{itemize}[leftmargin=*]

\item $F$ has an optimal solution, i.e., $\exists\bx_{\star}\in\R^{d}$
such that $F_{\star}\defeq F(\bx_{\star})=\inf_{\bx\in\R^{d}}F(\bx)$.
In addition, we let $\X_{\star}\defeq\argmin_{\bx\in\R^{d}}F(\bx)$
be the set of all optimal solutions.

\item $f$ is convex, $r$ is proper closed convex, and $F$ satisfies
the $\mu$-Bregman growth condition, i.e., $\exists\mu\geq0$ such
that $F(\bx)-F_{\star}\geq\mu\Breg_{\psi}(\X_{\star},\bx),\forall\bx\in\dom F$,
where $\Breg_{\psi}(\X_{\star},\bx)\defeq\inf_{\bx_{\star}\in\X_{\star}}\Breg_{\psi}(\bx_{\star},\bx)$.

\item $f$ is $(L,M)$-relatively smooth, i.e., $\exists L\geq0,M\geq0$
such that $L+M>0$ and $\Breg_{f}(\bx,\by)\leq L\Breg_{\psi}(\bx,\by)+M\sqrt{2\Breg_{\psi}(\bx,\by)},\forall\bx,\by\in\dom F$.

\end{itemize}
\end{assumption}
\begin{rem}
The Bregman growth condition was introduced in \citep{gutman2018unified,bauschke2019linear,doi:10.1287/moor.2019.1047}
and can be viewed as a natural generalization of the classical quadratic
growth condition under the Euclidean norm, which has been extensively
studied in the literature (see, e.g., \citep{necoara2019linear}).
\end{rem}
\begin{rem}
The notion of $(L,M)$-relative smoothness is a further relaxation
of $L$-relative smoothness \citep{doi:10.1287/moor.2016.0817,doi:10.1137/16M1099546}
and $M$-relative Lipschitz continuity \citep{doi:10.1287/ijoo.2018.0008,NEURIPS2020_b67fb336},
and also generalizes the concept of $(L,M)$-smoothness introduced
in \citep{lan2020first}.
\end{rem}
With Assumption \ref{assu:SMD}, let $\Breg\defeq\Breg_{\psi}(\X_{\star},\bx_{1})$.
We are now ready to present our results.

\paragraph{The case $\mu=0$.}

We first consider general convex optimization.
\begin{thm}
\label{thm:main-SMD-cvx-exp}Under Assumptions \ref{assu:oracle}
(with $h=f$) and \ref{assu:SMD} (with $\mu=0$), for any $T\in\N$,
$\SMD$ (Algorithm \ref{alg:SMD}) guarantees that, for $\bar{\bx}_{T+1}\defeq\frac{\sum_{t=1}^{T}\bx_{t+1}}{T}$,

\begin{itemize}[leftmargin=*]

\item $\E\left[\left(F(\bar{\bx}_{T+1})-F_{\star}\right)^{\frac{\p}{2}}\right]\lesssim\left(\frac{L\Breg}{T}+\frac{M\sqrt{\Breg}}{\sqrt{T}}+\frac{\sigma\sqrt{\Breg}}{T^{1-\frac{1}{\p}}}\right)^{\frac{\p}{2}}\log T$
under $\left\{ \eta_{t}=\frac{1}{2L}\land\frac{\sqrt{\Breg}}{M\sqrt{t}}\land\frac{\sqrt{\Breg}}{\sigma t^{\frac{1}{\p}}}\right\} _{t=1}^{T}$.

\item $\E\left[\left(F(\bar{\bx}_{T+1})-F_{\star}\right)^{\frac{\p}{2}}\right]\lesssim\left(\frac{L\Breg}{T}+\frac{M\sqrt{\Breg}}{\sqrt{T}}+\frac{\sigma\sqrt{\Breg}}{T^{1-\frac{1}{\p}}}\right)^{\frac{\p}{2}}$
under $\left\{ \eta_{t}=\frac{1}{2L}\land\frac{\sqrt{\Breg}}{M\sqrt{T}}\land\frac{\sqrt{\Breg}}{\sigma T^{\frac{1}{\p}}}\right\} _{t=1}^{T}$.

\item $\E\left[\left(F(\bx_{T+1})-F_{\star}\right)^{\frac{\p}{2}}\right]\lesssim\left(\frac{L\Breg}{T}+\frac{M\sqrt{\Breg}}{\sqrt{T}}+\frac{\sigma\sqrt{\Breg}}{T^{1-\frac{1}{\p}}}\right)^{\frac{\p}{2}}$
under $\left\{ \eta_{t}=\frac{l_{t}}{2LT}\land\frac{\sqrt{\Breg}l_{t}}{MT^{\frac{3}{2}}}\land\frac{\sqrt{\Breg}l_{t}}{\sigma T^{1+\frac{1}{\p}}}\right\} _{t=1}^{T}$
and $\left\{ l_{t}=T-t+1\right\} _{t=1}^{T}$.

\item $\E\left[\left(F(\bar{\bx}_{T+1})-F_{\star}\right)^{\frac{\p}{2}}\right]\lesssim\left(\frac{\Breg}{\eta\sqrt{T}}+\frac{M^{2}\eta}{\sqrt{T}}+\frac{\sigma^{2}\eta\log^{\frac{2\chi(\p)}{\p}}T}{T^{\frac{3}{2}-\frac{2}{\p}}}\right)^{\frac{\p}{2}}$
under $\left\{ \eta_{t}=\frac{\eta}{\sqrt{t}}\right\} _{t=1}^{T}$
for $\eta\in\left(0,\frac{1}{2L}\right]$ where $\chi(\p)\defeq\mathds{1}\left[\p=2\right]$,
when $\p\in\left(\nicefrac{4}{3},2\right]$.

\item $\E\left[\left(F(\bar{\bx}_{T+1})-F_{\star}\right)^{\frac{\p}{2}}\right]\lesssim\left(\frac{\Breg}{\eta\sqrt{T}}+\frac{M^{2}\eta}{\sqrt{T}}+\frac{\sigma^{2}\eta}{T^{\frac{3}{2}-\frac{2}{\p}}}\right)^{\frac{\p}{2}}$
under $\left\{ \eta_{t}=\frac{\eta}{\sqrt{T}}\right\} _{t=1}^{T}$
for $\eta\in\left(0,\frac{1}{2L}\right]$, when $\p\in\left(\nicefrac{4}{3},2\right]$.

\end{itemize}
\end{thm}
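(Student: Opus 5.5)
The plan is to start from the standard one-step inequality for composite $\SMD$, keep the noise cross term as a martingale, and bound everything in $\E[(\cdot)^{\frac{\p}{2}}]$ instead of in expectation. That change of metric is what lets us avoid a bounded domain.

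\textbf{Step 1 (one-step inequality).} By the three-point property of the prox step and $(L,M)$-relative smoothness, for any $\bx_{\star}\in\X_{\star}$ and $\eta_{t}\le\frac{1}{2L}$ we aim for
\begin{align*}
\eta_{t}\left(F(\bx_{t+1})-F_{\star}\right)+\Breg_{\psi}(\bx_{\star},\bx_{t+1})\le\Breg_{\psi}(\bx_{\star},\bx_{t})+\eta_{t}\left\langle \bxi_{t},\bx_{\star}-\bx_{t}\right\rangle +\eta_{t}\left\langle \bxi_{t},\bx_{t}-\bx_{t+1}\right\rangle -\tfrac{1}{4}\left\Vert \bx_{t+1}-\bx_{t}\right\Vert ^{2}+\eta_{t}M\left\Vert \bx_{t+1}-\bx_{t}\right\Vert .
\end{align*}
The $M$ term is bounded by Young against part of the quadratic term, giving $O(\eta_{t}^{2}M^{2})$. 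The first noise term is a martingale difference $\eta_{t}\langle\bxi_{t},\bx_{\star}-\bx_{t}\rangle$. For the local term we do not use Young with the square, which would require a second moment. Instead we bound it by $\min\{\eta_{t}\Vert\bxi_{t}\Vert_{\star}\Vert\bx_{t}-\bx_{t+1}\Vert-\frac18\Vert\cdot\Vert^{2}\}\le 2\eta_{t}^{2}\Vert\bxi_{t}\Vert_{\star}^{2}$.

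\textbf{Step 2 (summing and raising to $\frac{\p}{2}$).} Summing over $t$ gives
\[
S_{T}+\Breg_{\psi}(\bx_{\star},\bx_{T+1})\le\Breg+\sum_{t}\eta_{t}^{2}(M^{2}+\Vert\bxi_{t}\Vert_{\star}^{2})+\textstyle\sum_{t}\eta_{t}\langle\bxi_{t},\bx_{\star}-\bx_{t}\rangle,
\]
where $S_{T}$ is the weighted gap sum, so that for constant steps $F(\bar\bx_{T+1})-F_{\star}\le S_{T}/\sum\eta_{t}$ by convexity. We then take the power $\frac{\p}{2}\le1$, which is subadditive, and expectations. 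For the squared noise, $\E[(\sum_{t}\eta_{t}^{2}\Vert\bxi_{t}\Vert_{\star}^{2})^{\frac{\p}{2}}]\le\sum_{t}\eta_{t}^{\p}\sigma^{\p}$. This is exactly where the $\frac{\p}{2}$ exponent pays off, and with $\eta_{t}\sim\sqrt{\Breg}/(\sigma t^{1/\p})$ or $T^{1/\p}$ it gives the claimed $\sigma$ terms.

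\textbf{Step 3 (martingale term; main obstacle).} The key difficulty is $\E[(\sum_{t}\eta_{t}\langle\bxi_{t},\bx_{\star}-\bx_{t}\rangle)_{+}^{\frac{\p}{2}}]$ on an unbounded domain. I would use a Burkholder/von Bahr--Esseen type inequality for exponent $\frac{\p}{2}$ applied to the stopped martingale. This bounds it by $\E[(\sum_{t}\eta_{t}^{2}\Vert\bxi_{t}\Vert_{\star}^{2}\Vert\bx_{\star}-\bx_{t}\Vert^{2})^{\frac{\p}{4}}]$, and then by $\E[\max_{t}\Vert\bx_{\star}-\bx_{t}\Vert^{\frac{\p}{2}}(\sum_{t}\eta_{t}^{2}\Vert\bxi_{t}\Vert_{\star}^{2})^{\frac{\p}{4}}]$. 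Cauchy--Schwarz followed by AM-GM then gives
\[
\epsilon\,\E[\max_{t}\Breg_{\psi}(\bx_{\star},\bx_{t})^{\frac{\p}{2}}]+\epsilon^{-1}\textstyle\sum_{t}\eta_{t}^{\p}\sigma^{\p}.
\]
The max term is controlled self-referentially. Running the summed inequality up to every $s\le T$ and taking the sup over $s$ bounds $\E[\max_{s}\Breg_{\psi}(\bx_{\star},\bx_{s})^{\frac{\p}{2}}]$ by the same right-hand side, after using a Doob-type maximal form of the Burkholder inequality. Absorbing the $\epsilon$ term closes the loop without any boundedness assumption.

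\textbf{Step 4 (assembling the five bullets).}
\begin{itemize}
\item The time-varying step $t^{-1/\p}$ gives the $\log T$ factor: convert $S_{T}$ to $\bar\bx$ by a weighted-average argument and summation by parts, which produces a harmonic sum.
\item The constant steps give the second bullet directly.
\item The last-iterate bullet uses the same machinery combined with the standard Orabona/Zamani--Glineur last-iterate reference-point trick with linearly decaying $\eta_{t}\propto l_{t}$. Each reference point $\bz_{t}$ is a convex combination of $\bx_{\star}$ and past iterates, so it is $\F_{t-1}$-measurable, and the martingale bound of Step 3 still applies.
\item For the $\p$-agnostic steps $\eta/\sqrt{t}$ and $\eta/\sqrt{T}$, the same bounds produce $\sum\eta_{t}^{\p}\sigma^{\p}$. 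Normalizing by $(\sum\eta_{t})^{\p/2}$ gives the rate $T^{\frac{2}{\p}-\frac32}$ inside the bracket, which decays only when $\p>\frac43$.
\item The $\log^{2\chi(\p)/\p}$ factor arises from $\sum t^{-\p/2}$ when $\p=2$.
\end{itemize}
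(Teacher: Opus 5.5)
Your architecture matches the paper's: sum the one-step inequality before taking expectations, raise to the power $\frac{\p}{2}$, control the martingale term, close by self-bounding, and use $\F_{t-1}$-measurable reference points for the last iterate. However, Step 3 rests on an inequality that is false when $\p<2$. A square-function bound $\E[|\sum_{t\le n}X_{t}|^{q}]\lesssim\E[(\sum_{t\le n}X_{t}^{2})^{q/2}]$, or its maximal form, holds for general discrete-time martingales only when $q\geq1$. Von Bahr--Esseen likewise needs an exponent in $[1,2]$. At $q=\frac{\p}{2}<1$, subadditivity only gives $\sum_{t}\E|X_{t}|^{q}$, which scales like $T$ instead of $\sqrt{T}$. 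Here is a concrete failure. Take one dimension with $\by_{t}\equiv1$, and let $X_{t}$ be i.i.d.\ with $X_{t}=-\epsilon$ with probability $1-\delta$ and $X_{t}=\epsilon(1-\delta)/\delta$ otherwise. This noise is mean-zero and bounded, hence admissible under Assumption \ref{assu:oracle}. Choose $n\delta\le\frac12$ and $n\delta^{1-q}\le1$. With probability at least $\frac12$ no large jump occurs, and on that event $|\sum_{t\le n}X_{t}|=n\epsilon$ while the square function equals $\sqrt{n}\epsilon$. One then checks $\E[|\sum_{t\le n}X_{t}|^{q}]\geq\frac14 n^{q/2}\,\E[(\sum_{t\le n}X_{t}^{2})^{q/2}]$. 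So the first link of your chain (square function, then pull out $\max_{t}\|\bx_{\star}-\bx_{t}\|$, then Cauchy--Schwarz) fails. The same objection applies to the ``maximal Burkholder'' step you use to control $\E[\max_{s}\Breg_{\psi}(\bx_{\star},\bx_{s})^{\frac{\p}{2}}]$.

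The missing idea is to go up to exponent $\p$ before applying any martingale inequality, which is exactly Lemma \ref{lem:core}. By H\"older, $\E[|\sum_{s\le t}\langle\bxi_{s},\by_{s}\rangle|^{\frac{\p}{2}}]\le(\E[|\sum_{s\le t}\langle\bxi_{s},\by_{s}\rangle|^{\p}])^{1/2}$. Then von Bahr--Esseen at exponent $\p\in(1,2]$, together with predictability of $\by_{s}$ (so that $\E_{s-1}[|\langle\bxi_{s},\by_{s}\rangle|^{\p}]\le\sigma^{\p}\|\by_{s}\|^{\p}$), gives the bound $\lesssim\sigma^{\frac{\p}{2}}(\sum_{s\le t}\E[\|\by_{s}\|^{\p}])^{1/2}$. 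This involves $\E[\|\bx_{\star}-\bx_{s}\|^{\p}]$ for each $s$ separately, so you only need $\max_{s\le t}\E[\Breg_{s}^{\frac{\p}{2}}]$ rather than $\E[\max_{s}\Breg_{s}^{\frac{\p}{2}}]$. After AM--GM you get $\E[\Breg_{t+1}^{\frac{\p}{2}}]\le A_{t}+\frac12\max_{s\le t}\E[\Breg_{s}^{\frac{\p}{2}}]$ with $A_{t}$ nondecreasing. Induction then gives $\E[\Breg_{s}^{\frac{\p}{2}}]\le2A_{s-1}$, with no maximal inequality, stopping, or absorption needed. If you really want $\E[\max]$, apply Doob at exponent $\p$ and then H\"older, paying a factor $(\frac{\p}{\p-1})^{\p/2}$.

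With this replacement the rest of your plan goes through, apart from two small fixes. First, under $(L,M)$-relative smoothness, keep the $M$-term as $\eta_{t}M\sqrt{2\Breg_{\psi}(\bx_{t+1},\bx_{t})}$ against $-\frac12\Breg_{\psi}(\bx_{t+1},\bx_{t})$, rather than writing it in norms. Second, for the time-varying steps, passing to the uniform average only uses $\eta_{t}\ge\eta_{T}$. The $\log T$ comes from the harmonic sums in $\sum_{t}\eta_{t}^{2}M^{2}$ and $\sum_{t}\eta_{t}^{\p}\sigma^{\p}$, not from summation by parts.
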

\begin{rem}
The last two results also extend to the last iterate, up to additional
logarithmic factors. For simplicity, we omit these extensions here.
\end{rem}

\paragraph{Discussion on rates with known $\protect\p$.}

Theorem \ref{thm:main-SMD-cvx-exp} provides the first provable and
optimal convergence rates for both the average and last iterates of
$\SMD$ under only standard, in fact even weaker, requirements, since
the notion of $(L,M)$-relative smoothness further relaxes several
conditions considered in the existing literature.

As a sanity check, when $\p=2$ and $\left\Vert \cdot\right\Vert =\left\Vert \cdot\right\Vert _{2}$,
all rates coincide with the best-known ones for the corresponding
stepsize choices. Moreover, for general $\p\in\left(1,2\right]$,
we remark that these three kinds of stepsize are inspired by \citep{liu2025online}.
In addition, when $\sigma\to0$, all three rates recover the optimal
bound $L\Breg/T+M\sqrt{\Breg}/\sqrt{T}$ (up to $\log T$ for the
time-varying stepsize) for deterministic optimization.

To better understand Theorem \ref{thm:main-SMD-cvx-exp}, we compare
it with the three most related works \citep{pmlr-v178-vural22a,fatkhullin2025can,liu2025online}
below.

\begin{itemize}[leftmargin=*]

\item Compared with \citep{pmlr-v178-vural22a}, a key distinction
is that our mirror map $\psi$ is assumed only to be $1$-strongly
convex, in contrast to the condition of $\frac{\p}{\p-1}$-uniform
convexity for $\psi$ required in \citep{pmlr-v178-vural22a}. Even
in the case $\left\Vert \cdot\right\Vert =\left\Vert \cdot\right\Vert _{2}$
and $r=0$, the resulting update rule of \citep{pmlr-v178-vural22a}
(see Section 3.1 of \citep{pmlr-v178-vural22a}) is complicated compared
with standard $\SGD$, and its implementation requires additional
computational expense. More critically, the update rule of $\SMD$
in \citep{pmlr-v178-vural22a} requires the exact value of the tail
index $\p$, which makes the algorithm hard to tune, since even estimating
$\p$ is difficult in practice.

\item Compared with \citep{fatkhullin2025can}, our results lead
to several improvements in different aspects. 1. The rates in \citep{fatkhullin2025can}
are established only for the standard projected $\SGD$ algorithm
(i.e., $\SMD$ under the $2$-norm and $r=\iota_{\X}$ for a convex
set $\X\subseteq\R^{d}$). In contrast, Theorem \ref{thm:main-SMD-cvx-exp}
works for $\SMD$ under any general norm with a general $r$. 2. Although
\citep{fatkhullin2025can} provides a convergence result on unbounded
domains, unfortunately, it does not yield a practical output, as discussed
in Section \ref{sec:introduction}. In comparison, our rates are proved
for the feasible choices, namely the average and last iterates. 3.
The assumption made by \citep{fatkhullin2025can} in (strongly) convex
optimization is $\E_{t-1}\left[\left\Vert \bg_{t}\right\Vert _{2}^{\p}\right]\leq G^{\p}$,
which implicitly requires the objective to be $G$-Lipschitz and hence
restricts its applicability.

\item Compared with \citep{liu2025online}, we do not require the
domain to be bounded. This removes a restrictive compactness assumption
and allows our results to apply to more common settings. As a result,
our framework is more broadly applicable and better aligned with practical
optimization scenarios.

\end{itemize}

Lastly, we highlight that our analysis differs significantly from
the three works discussed above, as well as from other works related
to stochastic optimization under heavy-tailed noise. In particular,
our proof does not follow the standard arguments used in prior works
on heavy-tailed noise, but instead relies on a new framework. For
details, we refer the reader to Section \ref{sec:analysis}.

\paragraph{Discussion on rates with unknown $\protect\p$.}

When $\p$ is unknown in advance, Theorem \ref{thm:main-SMD-cvx-exp}
suggests that the classical choices of stepsize $\left\{ \eta_{t}=\frac{\eta}{\sqrt{t}}\right\} _{t=1}^{T}$
and $\left\{ \eta_{t}=\frac{\eta}{\sqrt{T}}\right\} _{t=1}^{T}$ already
guarantee convergence if $\p\in\left(\nicefrac{4}{3},2\right]$. As
far as we know, there are no prior results of this kind for convex
optimization under heavy-tailed noise. If a bounded domain is additionally
imposed, unfortunately, these rates are worse than the bound achieved
by $\AdaGradNorm$ \citep{liu2025online}, which provably converges
at a rate of $1/T^{1-\frac{1}{\p}}$ for all $\p\in\left(1,2\right]$
without knowing $\p$.

\paragraph{The case $\mu>0$.}

We next consider a positive parameter in the Bregman growth condition.
\begin{thm}
\label{thm:main-SMD-str-exp}Under Assumptions \ref{assu:oracle}
(with $h=f$) and \ref{assu:SMD} (with $\mu>0$), for any $T\in\N$,
$\SMD$ (Algorithm \ref{alg:SMD}) guarantees that, for $\kappa\defeq\frac{L}{\mu}$,
$\bar{\bx}_{T+1}\defeq\frac{\sum_{t=1}^{T}2(t+8\kappa+1)\bx_{t+1}}{T(T+16\kappa+3)}$,
and $T^{\prime}\defeq\left\lceil \frac{T}{2}\right\rceil $,

\begin{itemize}[leftmargin=*]

\item $\E\left[\left(F(\bar{\bx}_{T+1})-F_{\star}\right)^{\frac{\p}{2}}\right]\lesssim\left(\frac{\mu(1+\kappa)^{2}\Breg}{T(T+\kappa)}+\frac{M^{2}}{\mu(T+\kappa)}+\frac{\sigma^{2}}{\mu T^{1-\frac{2}{\p}}(T+\kappa)}\right)^{\frac{\p}{2}}$
under $\left\{ \eta_{t}=\frac{4}{\mu(t+8\kappa)}\right\} _{t=1}^{T}$.

\item $\E\left[\left(\Breg_{\psi}(\X_{\star},\bx_{T+1})\right)^{\frac{\p}{2}}\right]\lesssim\left(\frac{(1+\kappa)^{2}\Breg}{(T+\kappa)^{2}}+\frac{M^{2}}{\mu^{2}(T+\kappa)}+\frac{\sigma^{2}}{\mu^{2}(T+\kappa)^{2-\frac{2}{\p}}}\right)^{\frac{\p}{2}}$
under $\left\{ \eta_{t}=\frac{4}{\mu(t+8\kappa)}\right\} _{t=1}^{T}$.

\item $\E\left[\left(F(\bx_{T+1})-F_{\star}\right)^{\frac{\p}{2}}\right]\lesssim\left(\frac{\mu(1+\kappa)^{2}\Breg}{T(T+\kappa)}+\frac{M^{2}}{\mu T}+\frac{\sigma^{2}}{\mu T^{1-\frac{2}{\p}}(T+\kappa)}\right)^{\frac{\p}{2}}$under
$\left\{ \eta_{t}=\frac{4}{\mu(t+8\kappa)}\right\} _{t=1}^{T^{\prime}}$
and $\left\{ \eta_{t}=\frac{T-t+1}{\mu T(T+\kappa)}\right\} _{t=T^{\prime}+1}^{T}$.

\end{itemize}
\end{thm}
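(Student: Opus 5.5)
The plan is to derive all three bullets from one template: a one-step inequality for composite \SMD{}, a weighted sum of that inequality, and a new way to take expectations of the heavy-tailed cross term. I take this template to be the framework of Section \ref{sec:analysis} that also proves Theorem \ref{thm:main-SMD-cvx-exp}.

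\textbf{One-step inequality.} Let $\bx_{\star}$ be a projection of $\bx_t$ onto $\X_\star$ in the Bregman sense. Combining the optimality condition of the update, the three-point identity, convexity of $f$ and $r$, and $(L,M)$-relative smoothness gives, for $\eta_t\le\frac{1}{2L}$,
\begin{equation*}
F(\bx_{t+1})-F_\star\le\frac{\Breg_\psi(\bx_\star,\bx_t)-\Breg_\psi(\bx_\star,\bx_{t+1})}{\eta_t}-\frac{\Breg_\psi(\bx_{t+1},\bx_t)}{2\eta_t}+M\sqrt{2\Breg_\psi(\bx_{t+1},\bx_t)}+\langle\bxi_t,\bx_\star-\bx_{t+1}\rangle .
\end{equation*}
Next I split the noise term as $\langle\bxi_t,\bx_\star-\bx_t\rangle+\langle\bxi_t,\bx_t-\bx_{t+1}\rangle$. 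The first piece is a martingale difference. The second is bounded by $\|\bxi_t\|_\star\|\bx_t-\bx_{t+1}\|$, and I absorb it into the negative term $-\frac{1}{4\eta_t}\|\bx_{t+1}-\bx_t\|^2$ by Young's inequality. The $M$ term is absorbed the same way and contributes $O(\eta_tM^2)$. I do \emph{not} square the noise, since $\E\|\bxi_t\|_\star^2$ may be infinite. Instead I keep $\eta_t\|\bxi_t\|_\star^2\wedge\|\bxi_t\|_\star\|\bx_t-\bx_{t+1}\|$ in a truncated form.

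\textbf{Using $\mu>0$.} I spend half of $F(\bx_{t+1})-F_\star$ on Bregman growth, which gives $\frac{\mu}{2}\Breg_\psi(\X_\star,\bx_{t+1})$. With $\eta_t=\frac{4}{\mu(t+8\kappa)}$ and weights $w_t\propto t+8\kappa+1$, the standard telescoping for strongly convex \SMD{} goes through. I multiply by $w_t$, sum, and check that $\frac{w_t}{\eta_t}-\frac{w_t\mu}{2}\le\frac{w_{t-1}}{\eta_{t-1}}$. The reference point changes with $t$, but this is handled by $\Breg_\psi(\X_\star,\bx_{t+1})\le\Breg_\psi(\bx_\star^{(t)},\bx_{t+1})$. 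The result is a pathwise bound of the form
\begin{equation*}
\sum_t w_t(F(\bx_{t+1})-F_\star)+W_T\,\Breg_\psi(\X_\star,\bx_{T+1})\lesssim \mu\kappa^2\Breg+\sum_t\frac{w_t}{\mu t}M^2+\sum_t w_t\eta_t\,\Xi_t+\sum_t w_t\langle\bxi_t,\bx^{(t)}_\star-\bx_t\rangle .
\end{equation*}
Here $\Xi_t$ is the truncated noise term and $W_T\asymp\mu T^2$.

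\textbf{Taking expectations; the main obstacle.} The key step is to bound the $\frac p2$-th moment of the left-hand side $S$, not $\E S$. Write $S\le A+N$, where $A$ is deterministic and $N$ collects the noise. Since $\frac{p}{2}\le1$, we have $\E S^{p/2}\le A^{p/2}+\E N^{p/2}$. For the truncated terms, $\E[(\sum a_t\Xi_t)^{p/2}]\le\sum a_t^{p/2}\E\|\bxi_t\|_\star^{p}$ by subadditivity, which needs only $p$-th moments. This step should produce the rate $\sigma^2/(\mu T^{1-2/p}(T+\kappa))$.

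The martingale term is the real difficulty, because $\|\bx_t-\bx_\star\|$ is unbounded. I would first try a Burkholder--Davis--Gundy type inequality with exponent $\frac p2$ or $p$:
\begin{equation*}
\E\Big|\sum_t w_t\langle\bxi_t,\bx_\star-\bx_t\rangle\Big|^{p/2}\lesssim\E\Big(\sum_t w_t^2\|\bxi_t\|_\star^2\|\bx_t-\bx_\star\|^2\Big)^{p/4}.
\end{equation*}
Next I would bound $\|\bx_t-\bx_\star\|^2\le2\Breg_\psi$, take the maximum over $t$, and apply Young's inequality to split off $\frac12\E[(\max_t W_t\Breg_\psi(\X_\star,\bx_{t+1}))^{p/2}]$. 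That piece is absorbed into the left-hand side, with a maximal version of the recursion, and what remains is $\E(\sum_t w_t^2\|\bxi_t\|_\star^2/W_t)^{p/4}\le\sum_t(w_t^2/W_t)^{p/2}\sigma^p$. This absorption step, which keeps the sup inside and controls it uniformly, is where I expect the work to lie.

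With this done, bullet one follows from convexity applied to $\bar\bx_{T+1}$, and bullet two from the $W_T\Breg_\psi(\X_\star,\bx_{T+1})$ term.

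For the last iterate (bullet three), I run the first $T'$ steps as above, so that $\Breg_\psi(\X_\star,\bx_{T'+1})$ has the bullet-two rate. Then I apply the last-iterate argument used for the third bullet of Theorem \ref{thm:main-SMD-cvx-exp}, with decaying stepsize $\frac{T-t+1}{\mu T(T+\kappa)}$, started from $\bx_{T'+1}$. This comparison-sequence argument is carried out with the same $\frac p2$-moment treatment of the noise, and yields the stated bound, including the $M^2/(\mu T)$ term.
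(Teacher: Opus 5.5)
\textbf{Where your plan matches the paper.} Your skeleton is the paper's: the one-step inequality at $\by=[\bx_t]_\star$, Bregman growth, and weights $w_t\propto t+8\kappa+1$. These weights are exactly the paper's $\gamma_t=\eta_t\prod_{s<t}(1+\frac{\mu\eta_s}{2})$ for this stepsize. You then raise the summed inequality to the power $\frac{\p}{2}$ and use subadditivity, so only $\|\bxi_t\|_\star^{\p}$ survives; squaring the noise is therefore harmless and your truncation is unnecessary. The two-stage last-iterate argument is also the paper's.

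\textbf{The gap: the martingale term.} Your displayed bound
$\E|\sum_t w_t\langle\bxi_t,\bx_\star-\bx_t\rangle|^{\p/2}\lesssim\E(\sum_t w_t^2\|\bxi_t\|_\star^2\|\bx_t-\bx_\star\|^2)^{\p/4}$
is a discrete-time Burkholder--Davis--Gundy (BDG) inequality with exponent $q=\frac{\p}{2}<1$. That inequality fails for general discrete martingales, including ones of the form $\sum_t\langle\bxi_t,\by_t\rangle$ allowed by Assumption \ref{assu:oracle}. Take a martingale that steps up by $a$ with probability $1-\epsilon$, and otherwise jumps down by $a(1-\epsilon)/\epsilon$ and then freezes, with $\epsilon=n^{-2}$. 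To fit the assumption, put scaled i.i.d.\ two-point variables in $\bxi_t$ and the stopping in the predictable $\by_t$. Then $\E|M_n|^q\gtrsim(na)^q$, while $\E S_n^q\lesssim a^q(n^{q/2}+n^{2q-1})$, so the ratio blows up for $q<1$.

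Your Young split relies on exactly this exponent: it pulls out $\max_tW_t\Breg_\psi(\X_\star,\bx_{t+1})$ and leaves $\E(\sum_tc_t\|\bxi_t\|_\star^2)^{\p/2}$. If you use the valid exponent-$\p$ BDG followed by Jensen instead, you face $\sqrt{\E[X^{\p/2}Y^{\p/2}]}$. Here $X$ is the running maximum and $Y=\sum_tc_t\|\bxi_t\|_\star^2$ depends on it. A pathwise Young split then needs $\E Y^{\p}$, i.e.\ $2\p$-th moments of the noise. The ``maximal version of the recursion'' you would need for the absorption is also left unproved.

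\textbf{The missing idea: keep the maximum outside the expectation.} Lemma \ref{lem:core} applies von Bahr--Esseen at exponent $\p$ and then Jensen. Equivalently, use exponent-$\p$ BDG, apply $(\sum_sa_s)^{\p/2}\le\sum_sa_s^{\p/2}$ to the square function, and condition on $\F_{s-1}$ to integrate out $\|\bxi_s\|_\star^{\p}$. This gives the bound $2^{1-\frac{\p}{2}}\sigma^{\frac{\p}{2}}\sqrt{\sum_{s\le t}\gamma_s^{\p}\E\|[\bx_s]_\star-\bx_s\|^{\p}}$. It involves only the individual moments $\E[\Breg_\psi(\X_\star,\bx_s)^{\p/2}]$.

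You then bound these by $\max_{s\le t}\E[(\frac{\gamma_s}{\eta_s}\Breg_\psi(\X_\star,\bx_s))^{\p/2}]$, a maximum of expectations, and split with AM--GM. The argument closes by induction on $t$, because the summed inequality holds for every prefix $t\in[T]$. No maximal inequality is needed.

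\textbf{Two minor points.} First, the initial term is $\frac{w_1}{\eta_1}\Breg\asymp\mu(1+\kappa)^2\Breg$, not $\mu\kappa^2\Breg$; these differ when $\kappa\lesssim1$. Second, the telescoping condition should read $\frac{w_t}{\eta_t}\le\frac{w_{t-1}}{\eta_{t-1}}+\frac{\mu w_{t-1}}{2}$, which holds with equality for your choices.
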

Theorem \ref{thm:main-SMD-str-exp} shows the first optimal convergence
results for both the average and last iterates of $\SMD$ under heavy-tailed
noise. More importantly, all rates are proved under the Bregman growth
condition and the $(L,M)$-relative smoothness condition, which are
weaker than almost all assumptions in the literature. Even for the
Euclidean norm, the quadratic growth condition (which is equivalent
to the Bregman growth condition when $\psi=\frac{1}{2}\left\Vert \cdot\right\Vert _{2}^{2}$)
is known as the weakest notion among all the assumptions that serve
as relaxations of strong convexity \citep{pmlr-v130-guille-escuret21a}.
Therefore, under heavy-tailed noise, Theorem \ref{thm:main-SMD-str-exp}
is the most general and strongest result to our knowledge.

Regarding the rates, one can again take $\p=2$ as a sanity check.
The first kind of stepsize $\eta_{t}=\frac{4}{\mu(t+8\kappa)},\forall t\in\left[T\right]$
is standard in the literature on strongly convex optimization. The
second kind of stepsize is inspired by \citep{liu2024revisiting}.
Specifically, in the first stage of the procedure, we employ the classical
stepsize; in the second stage of the procedure, we switch to a linearly
decaying stepsize introduced in \citep{doi:10.1137/24M1717762}. Consequently,
we obtain the first last-iterate rate without any extra logarithmic
factors, improving upon \citep{liu2025online}. A notable feature
is that neither kind of stepsize requires knowledge of $\p$, similar
to the existing literature \citep{fatkhullin2025can,liu2025online}.

To better understand Theorem \ref{thm:main-SMD-str-exp}, we further
compare it with the two most closely related studies, \citep{fatkhullin2025can}
and \citep{liu2025online}, below.

\begin{itemize}[leftmargin=*]

\item Compared with \citep{fatkhullin2025can}, the first and third
points discussed before in the convex setting still hold, i.e., the
results in \citep{fatkhullin2025can} are established only for projected
$\SGD$ and require $\E_{t-1}\left[\left\Vert \bg_{t}\right\Vert _{2}^{\p}\right]\leq G^{\p}$.
In addition, we remark that the rates in \citep{fatkhullin2025can}
are also established for the weak convergence metrics $\E\left[\left(F(\bar{\bx}_{T+1})-F_{\star}\right)^{\frac{\p}{2}}\right]$
and $\E\left[\left\Vert \bx_{\star}-\bx_{T+1}\right\Vert _{2}^{\p}\right]$,
which are similar to the first two metrics in Theorem \ref{thm:main-SMD-str-exp}.

\item Compared with \citep{liu2025online}, we still do not require
a compactness assumption on the domain. This allows our results to
cover unbounded feasible regions and therefore makes our setting more
general and more broadly applicable than the one studied in \citep{liu2025online}.

\end{itemize}

Again, we emphasize that our proof technique is novel relative to
the existing literature. More details will be delivered in Section
\ref{sec:analysis}.

\subsection{Convergence of Accelerated Stochastic Mirror Descent}

\begin{algorithm}[H]
\caption{\label{alg:ASMD}Accelerated Stochastic Mirror Descent ($\protect\ASMD$)}

\begin{algorithmic}[1]

\STATE \textbf{Input:} initial point $\bx_{1}=\by_{1}=\bz_{1}\in\dom F$,
weight $w_{t}\in\left[0,1\right]$, stepsize $\eta_{t}>0$, mirror
map $\psi$

\FOR{$t=1$ \textbf{to} $T$}

\STATE $\bx_{t}=w_{t}\by_{t}+(1-w_{t})\bz_{t}$

\STATE $\by_{t+1}=\argmin_{\by\in\R^{d}}r(\by)+\left\langle \bg_{t},\by\right\rangle +\frac{\Breg_{\psi}(\by,\by_{t})}{\eta_{t}}$

\STATE $\bz_{t+1}=w_{t}\by_{t+1}+(1-w_{t})\bz_{t}$

\ENDFOR

\end{algorithmic}
\end{algorithm}

We study the Accelerated Stochastic Mirror Descent ($\ASMD$) method
described in Algorithm \ref{alg:ASMD}. There are several ways to
accelerate (Stochastic) Mirror Descent. The particular form presented
here is due to \citep{lan2012optimal,lan2020first}. Note that Algorithm
\ref{alg:ASMD} recovers $\SMD$ when $w_{t}=1,\forall t\in\left[T\right]$.

\paragraph{Assumption.}

To analyze $\ASMD$ for convex optimization, we require Assumption
\ref{assu:ASMD} below.
\begin{assumption}
\label{assu:ASMD}We make the following assumptions on the objective
function $F$:

\begin{itemize}[leftmargin=*]

\item $F$ has an optimal solution, i.e., $\exists\bx_{\star}\in\R^{d}$
such that $F_{\star}\defeq F(\bx_{\star})=\inf_{\bx\in\R^{d}}F(\bx)$.
In addition, we let $\X_{\star}\defeq\argmin_{\bx\in\R^{d}}F(\bx)$
be the set of all optimal solutions.

\item $f$ is convex and $r$ is proper closed convex.

\item $f$ is $(L,M)$-smooth, i.e., $\exists L\geq0,M\geq0$ such
that $L+M>0$ and $\Breg_{f}(\bx,\by)\leq\frac{L}{2}\left\Vert \bx-\by\right\Vert ^{2}+M\left\Vert \bx-\by\right\Vert ,\forall\bx,\by\in\dom F$.

\end{itemize}
\end{assumption}
\begin{rem}
Compared with Assumption \ref{assu:SMD}, Assumption \ref{assu:ASMD}
differs in two ways. First, we no longer impose the Bregman growth
condition. Second, we replace $(L,M)$-relative smoothness with $(L,M)$-smoothness.
This is because it is known that, in the deterministic case (i.e.,
$\sigma=0$ in Assumption \ref{assu:oracle}) with $r=0$, functions
satisfying the Polyak-{\L}ojasiewicz condition \citep{POLYAK1963864}
(which implies the quadratic growth condition \citep{pmlr-v130-guille-escuret21a},
equivalently, the Bregman growth condition when $\psi=\frac{1}{2}\left\Vert \cdot\right\Vert _{2}^{2}$)
cannot be accelerated \citep{pmlr-v195-yue23a}. Similarly, $(L,0)$-relative
smoothness cannot yield acceleration \citep{dragomir2022optimal}.
\end{rem}
With Assumption \ref{assu:ASMD}, let $\Breg\defeq\Breg_{\psi}(\X_{\star},\bx_{1})$.
We are now ready to present our results.
\begin{thm}
\label{thm:main-ASMD-cvx-exp}Under Assumptions \ref{assu:oracle}
(with $h=f$) and \ref{assu:ASMD}, for any $T\in\N$, $\ASMD$ (Algorithm
\ref{alg:ASMD}) guarantees that

\begin{itemize}[leftmargin=*]

\item $\E\left[\left(F(\bz_{T+1})-F_{\star}\right)^{\frac{\p}{2}}\right]\lesssim\left(\frac{L\Breg}{T^{2}}+\frac{M\sqrt{\Breg}}{\sqrt{T}}+\frac{\sigma\sqrt{\Breg}}{T^{1-\frac{1}{\p}}}\right)^{\frac{\p}{2}}\log T$
under $\left\{ w_{t}=\frac{2}{t+1}\right\} _{t=1}^{T}$ and $\left\{ \eta_{t}=\frac{1}{2w_{t}L}\land\frac{\sqrt{\Breg}}{w_{t}Mt^{\frac{3}{2}}}\land\frac{\sqrt{\Breg}}{w_{t}\sigma t^{1+\frac{1}{\p}}}\right\} _{t=1}^{T}$.

\item $\E\left[\left(F(\bz_{T+1})-F_{\star}\right)^{\frac{\p}{2}}\right]\lesssim\left(\frac{L\Breg}{T^{2}}+\frac{M\sqrt{\Breg}}{\sqrt{T}}+\frac{\sigma\sqrt{\Breg}}{T^{1-\frac{1}{\p}}}\right)^{\frac{\p}{2}}$
under $\left\{ w_{t}=\frac{2}{t+1}\right\} _{t=1}^{T}$ and $\left\{ \eta_{t}=\frac{1}{2w_{t}L}\land\frac{\sqrt{\Breg}}{w_{t}MT^{\frac{3}{2}}}\land\frac{\sqrt{\Breg}}{w_{t}\sigma T^{1+\frac{1}{\p}}}\right\} _{t=1}^{T}$.

\item $\E\left[\left(F(\bz_{T+1})-F_{\star}\right)^{\frac{\p}{2}}\right]\lesssim\left(\frac{\Breg}{\eta\sqrt{T}}+\frac{M^{2}\eta}{\sqrt{T}}+\frac{\sigma^{2}\eta}{T^{\frac{3}{2}-\frac{2}{\p}}}\right)^{\frac{\p}{2}}$
under $\left\{ w_{t}=\frac{2}{t+1}\right\} _{t=1}^{T}$ and $\left\{ \eta_{t}=\frac{\eta}{w_{t}T^{\frac{3}{2}}}\right\} _{t=1}^{T}$
for $\eta\in\left(0,\frac{1}{2L}\right]$, when $\p\in\left(\nicefrac{4}{3},2\right]$.

\end{itemize}
\end{thm}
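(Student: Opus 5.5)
We will follow the same two-stage template that we use for Theorem \ref{thm:main-SMD-cvx-exp}. First we derive a deterministic, pathwise one-step inequality for $\ASMD$ in the style of \citep{lan2012optimal,lan2020first}. Then we convert it into a bound on $\E[(F(\bz_{T+1})-F_{\star})^{\frac{\p}{2}}]$ without any boundedness of the domain.

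\textbf{Step 1: pathwise inequality.} Fix $\bx_{\star}\in\X_{\star}$ attaining $\Breg$, and set $\Gamma_{t}\defeq\prod_{s=2}^{t}(1-w_{s})$, which is of order $1/t^{2}$ for $w_{t}=\frac{2}{t+1}$. We will use three ingredients:
\begin{itemize}
\item $(L,M)$-smoothness at $\bz_{t+1}$ versus $\bx_{t}$, together with $\bz_{t+1}-\bx_{t}=w_{t}(\by_{t+1}-\by_{t})$;
\item convexity of $f$ and $r$;
\item the three-point lemma for the prox step defining $\by_{t+1}$.
\end{itemize}
Combining them gives
\[
F(\bz_{t+1})-F_{\star}\leq(1-w_{t})(F(\bz_{t})-F_{\star})+\frac{w_{t}}{\eta_{t}}\left(\Breg_{\psi}(\bx_{\star},\by_{t})-\Breg_{\psi}(\bx_{\star},\by_{t+1})\right)-\Big(\frac{w_{t}}{2\eta_{t}}-\frac{Lw_{t}^{2}}{2}\Big)\left\Vert \by_{t+1}-\by_{t}\right\Vert ^{2}+Mw_{t}\left\Vert \by_{t+1}-\by_{t}\right\Vert +w_{t}\left\langle \bxi_{t},\bx_{\star}-\by_{t+1}\right\rangle .
\]
The condition $\eta_{t}\leq\frac{1}{2w_{t}L}$ keeps a quadratic term $\frac{w_{t}}{4\eta_{t}}\left\Vert \by_{t+1}-\by_{t}\right\Vert ^{2}$ in reserve. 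We split the noise term as $\left\langle \bxi_{t},\bx_{\star}-\by_{t}\right\rangle +\left\langle \bxi_{t},\by_{t}-\by_{t+1}\right\rangle$. The first piece is a martingale difference. For the second piece, we do \emph{not} use Young's inequality with a squared norm, since $\E\left\Vert \bxi_{t}\right\Vert _{\star}^{2}$ may be infinite. Instead we keep the pathwise bound $\left\Vert \bxi_{t}\right\Vert _{\star}\left\Vert \by_{t+1}-\by_{t}\right\Vert -\frac{w_{t}}{8\eta_{t}}\left\Vert \by_{t+1}-\by_{t}\right\Vert ^{2}\leq 2\eta_{t}w_{t}\left\Vert \bxi_{t}\right\Vert _{\star}^{2}$; the extra factor $w_{t}$ comes from the $w_{t}$ multiplying the inner product. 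The $M$ term is absorbed the same way, yielding $2\eta_{t}w_{t}M^{2}$. Dividing by $\Gamma_{t}$ and telescoping (using $\frac{w_{t}}{\Gamma_{t}\eta_{t}}$ nondecreasing for the stated stepsizes) gives
\[
\frac{F(\bz_{T+1})-F_{\star}}{\Gamma_{T}}+\frac{w_{T}}{\Gamma_{T}\eta_{T}}\Breg_{\psi}(\bx_{\star},\by_{T+1})\leq\frac{w_{1}\Breg}{\eta_{1}}+\sum_{t}\frac{2\eta_{t}w_{t}^{2}}{\Gamma_{t}}\left(M^{2}+\left\Vert \bxi_{t}\right\Vert _{\star}^{2}\right)+\sum_{t}\frac{w_{t}}{\Gamma_{t}}\left\langle \bxi_{t},\bx_{\star}-\by_{t}\right\rangle .
\]

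\textbf{Step 2: the expectation framework.} Write the left side as $X_{T}\geq0$. To handle the martingale sum with unbounded $\by_{t}$, we take the $\frac{\p}{2}$ power and expectations, and use $(a+b)^{\frac{\p}{2}}\leq a^{\frac{\p}{2}}+b^{\frac{\p}{2}}$ to bound the squared-noise sum through $\E[(\sum_{t}c_{t}\left\Vert \bxi_{t}\right\Vert _{\star}^{2})^{\frac{\p}{2}}]\leq\sum_{t}c_{t}^{\frac{\p}{2}}\sigma^{\p}$. This is finite, and it is exactly why the metric carries the exponent $\frac{\p}{2}$.

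For the martingale $S_{T}=\sum_{t}\frac{w_{t}}{\Gamma_{t}}\left\langle \bxi_{t},\bx_{\star}-\by_{t}\right\rangle$, we apply a Burkholder/von Bahr--Esseen type inequality in the $\frac{\p}{2}$-scale. Concretely, we bound $\E[\max_{s\leq T}(S_{s})_{+}^{\frac{\p}{2}}]$ by $\E[(\sum_{t}(\frac{w_{t}}{\Gamma_{t}})^{\p}\left\Vert \bxi_{t}\right\Vert _{\star}^{\p}\left\Vert \bx_{\star}-\by_{t}\right\Vert ^{\p})^{\frac{1}{2}}]$. We then control $\left\Vert \bx_{\star}-\by_{t}\right\Vert ^{2}\leq2\Breg_{\psi}(\bx_{\star},\by_{t})\leq\frac{2\Gamma_{t-1}\eta_{t-1}}{w_{t-1}}\max_{s<t}X_{s}$ via the left side of the telescoped inequality at earlier times. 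This produces a self-bounding inequality
\[
\E\Big[\max_{s\leq T}X_{s}^{\frac{\p}{2}}\Big]\leq A+B\,\E\Big[\max_{s\leq T}X_{s}^{\frac{\p}{2}}\Big]^{\frac12}.
\]
Solving it gives $\E[X_{T}^{\frac{\p}{2}}]\lesssim A+B^{2}$. Here $A$ collects $(\frac{w_{1}\Breg}{\eta_{1}})^{\frac{\p}{2}}$, $(\sum\eta_{t}w_{t}^{2}M^{2}/\Gamma_{t})^{\frac{\p}{2}}$ and $\sum(\eta_{t}w_{t}^{2}/\Gamma_{t})^{\frac{\p}{2}}\sigma^{\p}$, while $B^{2}\approx(\sum_{t}(w_{t}/\Gamma_{t})^{\p}\sigma^{\p}\Gamma_{t}^{\frac{\p}{2}}\eta_{t}^{\frac{\p}{2}}w_{t}^{-\frac{\p}{2}})$, up to constants. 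I expect this stopping/maximal-inequality step, applied to a martingale whose increments depend on the iterates being controlled, to be the main obstacle.

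\textbf{Step 3: plugging in stepsizes.} Multiplying by $\Gamma_{T}^{\frac{\p}{2}}\asymp T^{-\p}$ finishes the proof. With $\eta_{t}w_{t}=\frac{1}{2L}\land\frac{\sqrt{\Breg}}{MT^{3/2}}\land\frac{\sqrt{\Breg}}{\sigma T^{1+1/\p}}$ (constant in $t$), the initial term gives $L\Breg/T^{2}+M\sqrt{\Breg}/\sqrt{T}+\sigma\sqrt{\Breg}/T^{1-\frac{1}{\p}}$. The $M^{2}$ sum $\sum\eta_{t}w_{t}\cdot t$ gives $M^{2}\eta w T^{2}/T^{2}\to M\sqrt{\Breg}/\sqrt{T}$. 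The noise sums scale as $(\eta w)^{\frac{\p}{2}}\sigma^{\p}T^{1+\frac{\p}{2}}\cdot T^{-\p}$, which equals $(\sigma\sqrt{\Breg}/T^{1-\frac{1}{\p}})^{\frac{\p}{2}}$. The time-varying choice loses one $\log T$ in the sum $\sum_{t}1/t$. The third choice $\eta_{t}w_{t}=\eta/T^{3/2}$ is verified analogously; there the noise sum $\sum_{t}t^{\frac{\p}{2}}\cdot T^{-\frac{3\p}{4}}\cdot T^{-\p}\cdot\ldots$ yields $(\sigma^{2}\eta/T^{\frac32-\frac{2}{\p}})^{\frac{\p}{2}}$, and this decays only when $\p>\frac43$.
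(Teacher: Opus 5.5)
\textbf{Step 1 telescopes in the wrong direction.} After dividing by $\Gamma_t$, the Bregman terms carry the coefficient $a_t\defeq\frac{w_t}{\Gamma_t\eta_t}$. Summation by parts gives
\[
\sum_{t=1}^{T}a_{t}\left(\Breg_{\psi}(\bx_{\star},\by_{t})-\Breg_{\psi}(\bx_{\star},\by_{t+1})\right)=a_{1}\Breg-a_{T}\Breg_{\psi}(\bx_{\star},\by_{T+1})+\sum_{t=2}^{T}\left(a_{t}-a_{t-1}\right)\Breg_{\psi}(\bx_{\star},\by_{t}).
\]
The last sum can be discarded only if $\{a_t\}$ is \emph{nonincreasing}. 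For all three stepsize choices, $c_t\defeq\eta_tw_t$ is nonincreasing and $w_t/\Gamma_t=t$, so $a_t=\frac{2t}{(t+1)c_t}$ is increasing, as you yourself note. The nonnegative leftover $\sum_{t\geq2}(a_t-a_{t-1})\Breg_\psi(\bx_\star,\by_t)$, of total weight $a_T-a_1$, therefore survives, and on an unbounded domain no diameter bound controls it. Your displayed telescoped inequality does not follow.

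The repair is the normalization of Lemma \ref{lem:ASMD-core}: multiply the one-step inequality by $\eta_t/w_t$ instead of dividing by $\Gamma_t$. Then the Bregman terms telescope with coefficient $1$. The terms $\Delta_t\defeq F(\bz_t)-F_\star$ chain via $\frac{\eta_t}{w_t}\Delta_{t+1}\geq\frac{(1-w_{t+1})\eta_{t+1}}{w_{t+1}}\Delta_{t+1}$, a condition equivalent to $a_t\leq a_{t+1}$ — exactly the monotonicity your stepsizes have. With $w_1=1$ this gives, for every $t$,
\[
\Breg_{\psi}(\bx_{\star},\by_{t+1})+\frac{\eta_{t}}{w_{t}}\Delta_{t+1}\leq\Breg+\sum_{s=1}^{t}\eta_{s}^{2}\left(M+\|\bxi_{s}\|_{\star}\right)^{2}+\sum_{s=1}^{t}\eta_{s}\left\langle \bxi_{s},\bx_{\star}-\by_{s}\right\rangle ,
\]
and one divides by $(\eta_T/w_T)^{\p/2}$ only at the very end.

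\textbf{Step 2 relies on an inequality that is false for $\p<2$.} A bound $\E[\max_{s\leq T}(S_s)_+^{\p/2}]\lesssim\E[(\sum_t|d_t|^{\p})^{1/2}]$, with the expectation \emph{outside} the square root ($d_t$ the martingale increments), fails for discrete-time martingales when $\p/2<1$, even without the maximum. In dimension one, take multiplier $1$ and i.i.d.\ increments equal to $a$ with probability $1-q$ and to $-a(1-q)/q$ with probability $q$. Letting $q\to0$ with $T$ fixed, $S_T=Ta$ with probability tending to one. The left side then tends to $(Ta)^{\p/2}$, the right side to $\sqrt{T}a^{\p/2}$, and the ratio $T^{(\p-1)/2}$ is unbounded. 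This is the same mechanism that breaks Burkholder--Davis--Gundy below exponent $1$ in discrete time.

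What is true, and what the paper uses (Lemma \ref{lem:core}), puts the expectation \emph{inside}. By H\"older and von Bahr--Esseen, for any $\F_{s-1}$-measurable $\bv_s$,
\[
\E\left[\left|\sum_{s=1}^{t}\left\langle \bxi_{s},\bv_{s}\right\rangle \right|^{\frac{\p}{2}}\right]\leq\left(\E\left[\left|\sum_{s=1}^{t}\left\langle \bxi_{s},\bv_{s}\right\rangle \right|^{\p}\right]\right)^{\frac{1}{2}}\leq\left(2^{2-\p}\sigma^{\p}\sum_{s=1}^{t}\E\left[\|\bv_{s}\|^{\p}\right]\right)^{\frac{1}{2}}.
\]
With $\bv_s=\eta_s(\bx_\star-\by_s)$, this involves $\E[\Breg_\psi(\bx_\star,\by_s)^{\p/2}]$ for each $s\leq t$ separately. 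One bounds it by $\max_{s\leq t}\E[\Breg_\psi(\bx_\star,\by_s)^{\p/2}]$, applies AM--GM, and closes by induction on $t$ using the inequality displayed above. No maximal inequality or stopping argument is needed, nor the a priori finiteness of $\E[\max_sX_s^{\p/2}]$ that your self-bounding step tacitly assumes. The obstacle you anticipated does not arise.

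\textbf{Minor.} The Young step should read $w_t\|\bxi_t\|_\star\|\by_{t+1}-\by_t\|-\frac{w_t}{8\eta_t}\|\by_{t+1}-\by_t\|^2\leq2\eta_tw_t\|\bxi_t\|_\star^2$. After dividing by $\Gamma_t$, the noise and $M$ weights are therefore $\frac{2\eta_tw_t}{\Gamma_t}$, not $\frac{2\eta_tw_t^2}{\Gamma_t}$. With the weights you wrote, your Step 3 arithmetic is internally inconsistent: your $M^2$ sum actually yields $M\sqrt{\Breg}/T^{3/2}$. The correct weights give the stated rates.
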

\begin{rem}
In Theorem \ref{thm:main-ASMD-cvx-exp}, when $\p$ is unknown, we
can also consider a time-varying stepsize $\left\{ \eta_{t}=\frac{\eta}{w_{t}t^{\frac{3}{2}}}\right\} _{t=1}^{T}$
for $\eta\in\left(0,\frac{1}{2L}\right]$ . For simplicity, we omit
this case here.
\end{rem}
Theorem \ref{thm:main-ASMD-cvx-exp} generalizes the classical bounds
for $\ASMD$ from the finite-variance case (i.e., $\p=2$) \citep{lan2012optimal,lan2020first}
to the heavy-tailed setting. Compared with the convergence results
for $\SMD$ in Theorem \ref{thm:main-SMD-cvx-exp}, the first two
rates for $\ASMD$ established in Theorem \ref{thm:main-ASMD-cvx-exp}
contain a faster-decaying higher-order term. In particular, the leading
smoothness-dependent term is improved from the rates of $\SMD$ to
its accelerated counterpart, while the remaining terms match those
of $\SMD$. As for the last rate, it is of the same order as the corresponding
result for $\SMD$ when the problem-dependent parameters are not assumed
to be known.

\section{Nonconvex Optimization\label{sec:ncvx}}

In this section, we shift our focus to nonconvex optimization. Again,
we introduce some preliminaries.

\paragraph{Notation.}

In nonconvex optimization, $\left\Vert \cdot\right\Vert $ denotes
the standard Euclidean norm on $\R^{d}$, i.e., $\left\Vert \cdot\right\Vert \defeq\sqrt{\left\langle \cdot,\cdot\right\rangle }$.
In this case, the dual norm $\left\Vert \cdot\right\Vert _{\star}$
is also the Euclidean norm.

\paragraph{Objective.}

We are interested in optimizing $F$ over $\R^{d}$, where $F:\R^{d}\to\R$
is differentiable and possibly nonconvex. Since finding a global optimal
solution is generally infeasible in nonconvex optimization, we focus
instead on finding stationary points.

\paragraph{Convergence metric.}

For nonconvex optimization, we still prove convergence rates in the
form of $\E\left[\left(\metric\right)^{\frac{\p}{2}}\right]\lesssim\left(\rate\right)^{\frac{\p}{2}}$
(up to possible logarithmic factors), where $\metric=\sum_{t=1}^{T}\left\Vert \nabla F(\bx_{t})\right\Vert ^{2}/T$
now. However, as mentioned in Section \ref{sec:introduction}, this
measurement is somewhat subtle, as it lies between the classical criterion
$\sum_{t=1}^{T}\left\Vert \nabla F(\bx_{t})\right\Vert ^{2}/T$ for
$\SGD$ and the commonly studied metric $\sum_{t=1}^{T}\left\Vert \nabla F(\bx_{t})\right\Vert /T$
for $\NSGD\text{(}\textsf{M}\text{)}$ \citep{pmlr-v258-hubler25a,liu2025nonconvex,JMLR:v26:24-1991}
and adaptive gradient methods, e.g., $\Adam$ \citep{kingma2014adam,NEURIPS2023_7ac19fdc},
because
\begin{equation}
\left(\E\left[\frac{\sum_{t=1}^{T}\left\Vert \nabla F(\bx_{t})\right\Vert }{T}\right]\right)^{\p}\leq\E\left[\left(\frac{\sum_{t=1}^{T}\left\Vert \nabla F(\bx_{t})\right\Vert ^{2}}{T}\right)^{\frac{\p}{2}}\right]\leq\left(\E\left[\frac{\sum_{t=1}^{T}\left\Vert \nabla F(\bx_{t})\right\Vert ^{2}}{T}\right]\right)^{\frac{\p}{2}}.\label{eq:main-ncvx-metric}
\end{equation}
Again, for simplicity, when discussing our results, we focus on the
$\rate$ term and ignore its exponent $\frac{\p}{2}$ in our bound
in most cases.

\subsection{Convergence of Stochastic Gradient Descent}

\begin{algorithm}[H]
\caption{\label{alg:SGD}Stochastic Gradient Descent ($\protect\SGD$)}

\begin{algorithmic}[1]

\STATE\textbf{Input:} initial point $\bx_{1}\in\R^{d}$, stepsize
$\eta_{t}>0$

\FOR{$t=1$ \textbf{to} $T$}

\STATE$\bx_{t+1}=\bx_{t}-\eta_{t}\bg_{t}$

\ENDFOR

\end{algorithmic}
\end{algorithm}

We consider the standard Stochastic Gradient Descent ($\SGD$) method
\citep{10.1214/aoms/1177729586}, as presented in Algorithm \ref{alg:SGD}.
As discussed previously, $\SGD$ can be recovered from $\SMD$ (Algorithm
\ref{alg:SMD}).

\paragraph{Assumption.}

To analyze $\SGD$ for nonconvex optimization, we require Assumption
\ref{assu:SGD} below.
\begin{assumption}
\label{assu:SGD}We make the following assumptions on the objective
function $F$:

\begin{itemize}[leftmargin=*]

\item $F$ is lower bounded, i.e., $F_{\star}\defeq\inf_{\bx\in\R^{d}}F(\bx)>-\infty$.

\item $F$ is $L$-upper smooth, i.e., $\exists L>0$ such that $\Breg_{F}(\bx,\by)\leq\frac{L}{2}\left\Vert \bx-\by\right\Vert ^{2},\forall\bx,\by\in\R^{d}$.

\end{itemize}
\end{assumption}
\begin{rem}
Note that $F$ under Assumption \ref{assu:SGD} need not be smooth
in the classical sense, meaning that it does not necessarily satisfy
$\left\Vert \nabla F(\bx)-\nabla F(\by)\right\Vert \leq L\left\Vert \bx-\by\right\Vert ,\forall\bx,\by\in\R^{d}$.
\end{rem}
Under Assumption \ref{assu:SGD}, let $\Delta\defeq F(\bx_{1})-F_{\star}$.
We are now ready to state our results.
\begin{thm}
\label{thm:main-SGD-exp}Under Assumptions \ref{assu:oracle} (with
$h=F$) and \ref{assu:SGD}, for any $T\in\N$, $\SGD$ (Algorithm
\ref{alg:SGD}) guarantees that

\begin{itemize}[leftmargin=*]

\item $\E\left[\left(\frac{\sum_{t=1}^{T}\left\Vert \nabla F(\bx_{t})\right\Vert ^{2}}{T}\right)^{\frac{\p}{2}}\right]\lesssim\left(\frac{L\Delta\log^{\frac{2}{\p}}T}{T}+\frac{\sigma^{2}+\sigma\sqrt{L\Delta}\log^{\frac{2}{\p}}T}{T^{1-\frac{1}{\p}}}\right)^{\frac{\p}{2}}$
under $\left\{ \eta_{t}=\frac{1}{L}\land\frac{\sqrt{\Delta/L}}{\sigma t^{\frac{1}{\p}}}\right\} _{t=1}^{T}$.

\item $\E\left[\left(\frac{\sum_{t=1}^{T}\left\Vert \nabla F(\bx_{t})\right\Vert ^{2}}{T}\right)^{\frac{\p}{2}}\right]\lesssim\left(\frac{L\Delta}{T}+\frac{\sigma^{2}}{T^{2-\frac{2}{\p}}}+\frac{\sigma\sqrt{L\Delta}}{T^{1-\frac{1}{\p}}}\right)^{\frac{\p}{2}}$
under $\left\{ \eta_{t}=\frac{1}{L}\land\frac{\sqrt{\Delta/L}}{\sigma T^{\frac{1}{\p}}}\right\} _{t=1}^{T}$.

\item $\E\left[\left(\frac{\sum_{t=1}^{T}\left\Vert \nabla F(\bx_{t})\right\Vert ^{2}}{T}\right)^{\frac{\p}{2}}\right]\lesssim\left(\frac{\Delta}{\eta\sqrt{T}}+\frac{\sigma^{2}}{\sqrt{T}}+\frac{\sigma^{2}L\eta\log^{\frac{2\chi(\p)}{\p}}T}{T^{\frac{3}{2}-\frac{2}{\p}}}\right)^{\frac{\p}{2}}$
under $\left\{ \eta_{t}=\frac{\eta}{\sqrt{t}}\right\} _{t=1}^{T}$
for $\eta\in\left(0,\frac{1}{L}\right]$ where $\chi(\p)\defeq\mathds{1}\left[\p=2\right]$,
when $\p\in\left(\nicefrac{4}{3},2\right]$.

\item $\E\left[\left(\frac{\sum_{t=1}^{T}\left\Vert \nabla F(\bx_{t})\right\Vert ^{2}}{T}\right)^{\frac{\p}{2}}\right]\lesssim\left(\frac{\Delta}{\eta\sqrt{T}}+\frac{\sigma^{2}}{T^{2-\frac{2}{\p}}}+\frac{\sigma^{2}L\eta}{T^{\frac{3}{2}-\frac{2}{\p}}}\right)^{\frac{\p}{2}}$
under $\left\{ \eta_{t}=\frac{\eta}{\sqrt{T}}\right\} _{t=1}^{T}$
for $\eta\in\left(0,\frac{1}{L}\right]$, when $\p\in\left(\nicefrac{4}{3},2\right]$.

\end{itemize}
\end{thm}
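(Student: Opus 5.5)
The plan is to run the classical descent-lemma argument for $\SGD$, but never to take an expectation of a quantity involving $\left\Vert \bxi_{t}\right\Vert ^{2}$. Instead, raise the whole inequality to the power $\frac{\p}{2}\leq1$ and handle the martingale term with a $\p$-th moment inequality.

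\textbf{Step 1 (pathwise inequality).} By $L$-upper smoothness (Assumption \ref{assu:SGD}), $F(\bx_{t+1})\leq F(\bx_{t})-\eta_{t}\left\langle \nabla F(\bx_{t}),\bg_{t}\right\rangle +\frac{L\eta_{t}^{2}}{2}\left\Vert \bg_{t}\right\Vert ^{2}$. Write $\bg_{t}=\nabla F(\bx_{t})+\bxi_{t}$, use $\left\Vert \bg_{t}\right\Vert ^{2}\leq2\left\Vert \nabla F(\bx_{t})\right\Vert ^{2}+2\left\Vert \bxi_{t}\right\Vert ^{2}$, and take $\eta_{t}\leq\frac{1}{2L}$; the factor $2$ is absorbed into constants, and the stated $\frac{1}{L}$ only changes constants. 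Summing then gives the deterministic inequality
\[
S_{T}\defeq\frac{1}{2}\sum_{t=1}^{T}\eta_{t}\left\Vert \nabla F(\bx_{t})\right\Vert ^{2}\leq\Delta+M_{T}+L\sum_{t=1}^{T}\eta_{t}^{2}\left\Vert \bxi_{t}\right\Vert ^{2},\qquad M_{T}\defeq-\sum_{t=1}^{T}\eta_{t}\left\langle \nabla F(\bx_{t}),\bxi_{t}\right\rangle .
\]
Here $M_{T}$ is a martingale, since $\bx_{t}\in\F_{t-1}$.

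\textbf{Step 2 (power $\frac{\p}{2}$).} Apply subadditivity of $x\mapsto x^{\p/2}$ on $[0,\infty)$ to get
\[
\E\left[S_{T}^{\p/2}\right]\leq\Delta^{\p/2}+\E\left[\left|M_{T}\right|^{\p/2}\right]+L^{\p/2}\sum_{t}\eta_{t}^{\p}\E\left[\left\Vert \bxi_{t}\right\Vert ^{\p}\right].
\]
The last sum is at most $L^{\p/2}\sigma^{\p}\sum_{t}\eta_{t}^{\p}$, which is finite precisely because the exponent on $\left\Vert \bxi_{t}\right\Vert $ has dropped from $2$ to $\p$.

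\textbf{Step 3 (martingale term; the main obstacle).} First apply Jensen, $\E|M_{T}|^{\p/2}\leq(\E|M_{T}|^{\p})^{1/2}$. Next use the von Bahr--Esseen inequality for martingale differences with $\p\in(1,2]$:
\[
\E|M_{T}|^{\p}\leq2\sum_{t}\eta_{t}^{\p}\E\left[\left\Vert \nabla F(\bx_{t})\right\Vert ^{\p}\E_{t-1}\left\Vert \bxi_{t}\right\Vert ^{\p}\right]\leq2\sigma^{\p}\E\left[\sum_{t}\eta_{t}^{\p}\left\Vert \nabla F(\bx_{t})\right\Vert ^{\p}\right].
\]
Then apply Hölder with exponents $\frac{2}{\p}$ and $\frac{2}{2-\p}$:
\[
\sum_{t}\eta_{t}^{\p}\left\Vert \nabla F(\bx_{t})\right\Vert ^{\p}=\sum_{t}\left(\eta_{t}\left\Vert \nabla F(\bx_{t})\right\Vert ^{2}\right)^{\p/2}\eta_{t}^{\p/2}\leq(2S_{T})^{\p/2}A_{T},\qquad A_{T}\defeq\left(\sum_{t}\eta_{t}^{\frac{\p}{2-\p}}\right)^{\frac{2-\p}{2}},
\]
with the $\p=2$ case read as $\max_{t}\eta_{t}$. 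Combining these with AM--GM yields $\E|M_{T}|^{\p/2}\leq\frac{1}{2}\E[S_{T}^{\p/2}]+O(\sigma^{\p}A_{T})$, and the first term is absorbed into the left-hand side. Two points need care here. One is justifying that $\E[S_{T}^{\p/2}]<\infty$ before absorbing; I would argue inductively that every $\bx_{t}$ has a finite $\p$-th moment, or else localize with a stopping time. The other is checking that the von Bahr--Esseen constant applies conditionally to martingale differences with random predictable weights.

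\textbf{Step 4 (stepsizes).} The result is
\[
\E\left[S_{T}^{\p/2}\right]\lesssim\Delta^{\p/2}+L^{\p/2}\sigma^{\p}\sum_{t}\eta_{t}^{\p}+\sigma^{\p}A_{T}.
\]
For nonincreasing $\eta_{t}$, $S_{T}\geq\frac{\eta_{T}}{2}\sum_{t}\left\Vert \nabla F(\bx_{t})\right\Vert ^{2}$, so dividing by $(\eta_{T}T)^{\p/2}$ gives the metric in Theorem \ref{thm:main-SGD-exp}.
\begin{itemize}[leftmargin=*]
\item For a constant stepsize $\eta_{t}\equiv\eta$, the three terms become $(\Delta/(\eta T))^{\p/2}$, then $(L\sigma^{2}\eta T^{\frac{2}{\p}-1})^{\p/2}$, then $(\sigma^{2}/T^{2-\frac{2}{\p}})^{\p/2}$.
\item Substituting $\eta=\frac{1}{L}\land\frac{\sqrt{\Delta/L}}{\sigma T^{1/\p}}$ gives the second bullet.
\item Substituting $\eta/\sqrt{T}$ gives the fourth bullet; the middle term decays only when $\frac{3}{2}-\frac{2}{\p}>0$, i.e. $\p>\frac{4}{3}$.
\item For time-varying stepsizes, $\sum_{t}t^{-1}$ or $\sum_{t}t^{-\p/2}$ produces the $\log T$ factors: the $\p=2$ case gives $\chi(\p)$, and the first bullet's $\log^{2/\p}T$ comes from raising $\log T$ back to the $\frac{2}{\p}$ power. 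Comparing $\eta_{T}$ with the average stepsize then gives the first and third bullets.
\end{itemize}
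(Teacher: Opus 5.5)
Your proposal is correct and follows the paper's proof essentially step for step: sum the descent inequality pathwise, raise it to the power $\frac{\p}{2}$ via subadditivity, control the martingale term by Jensen plus von Bahr--Esseen (exactly Lemma \ref{lem:core}), apply H\"{o}lder with exponents $\frac{2}{\p}$ and $\frac{2}{2-\p}$, absorb via AM--GM, and plug in the stepsizes; your check that $\E[S_{T}^{\p/2}]<\infty$ before absorbing is a point the paper leaves implicit. The one thing to adjust is Step~1: with $\left\Vert \bg_{t}\right\Vert ^{2}\leq2\left\Vert \nabla F(\bx_{t})\right\Vert ^{2}+2\left\Vert \bxi_{t}\right\Vert ^{2}$ the descent coefficient of $\left\Vert \nabla F(\bx_{t})\right\Vert ^{2}$ vanishes at $\eta_{t}=\frac{1}{L}$, and the theorem's stepsizes can equal $\frac{1}{L}$, so you should instead expand $\left\Vert \bg_{t}\right\Vert ^{2}$ exactly as in Lemma \ref{lem:SGD-core}, which gives the still-predictable martingale weights $\eta_{t}^{2}L-\eta_{t}$ with $\left|\eta_{t}L-1\right|\leq1$.
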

Theorem \ref{thm:main-SGD-exp} gives the first convergence guarantee
for $\SGD$ under heavy-tailed noise without any additional procedures,
such as gradient batching \citep{fatkhullin2025can}, gradient clipping
\citep{pmlr-v202-sadiev23a,NEURIPS2023_4c454d34}, or gradient normalization
\citep{pmlr-v258-hubler25a,liu2025nonconvex,JMLR:v26:24-1991}.

\paragraph{Discussion on rates with known $\protect\p$.}

To gain some insight into Theorem \ref{thm:main-SGD-exp}, we first
set $\p=2$, which yields two convergence rates $L\Delta\log(T)/T+(\sigma\sqrt{L\Delta}\log T+\sigma^{2})/\sqrt{T}$
and $(L\Delta+\sigma^{2})/T+\sigma\sqrt{L\Delta}/\sqrt{T}$. We note
that both rates are nearly optimal, since they match the optimal rate
$L\Delta/T+\sigma\sqrt{L\Delta}/\sqrt{T}$ \citep{doi:10.1137/120880811,arjevani2023lower}
up to the additive terms $\sigma^{2}/\sqrt{T}$ and $\sigma^{2}/T$.

For general $\p\in\left(1,2\right]$, we compare Theorem \ref{thm:main-SGD-exp}
with the closest related work, \citep{fatkhullin2025can}. In \citep{fatkhullin2025can},
the authors prove that, for $F$ satisfying Assumption \ref{assu:SGD}
and, additionally, $\ell$-lower smoothness with $\ell>0$ (i.e.,
$-\frac{\ell}{2}\left\Vert \bx-\by\right\Vert ^{2}\leq\Breg_{F}(\bx,\by),\forall\bx,\by\in\R^{d}$),
$\Mini$ can ensure $\E\left[\sum_{t=1}^{T}\left\Vert \nabla F(\bx_{t})\right\Vert ^{\p}/T\right]\leq\varepsilon^{\p}$
after using $(L+\ell)\Delta/\varepsilon^{2}+(\sigma\sqrt{L^{-1}(\ell+L)^{2}\Delta}/\varepsilon^{2})^{\frac{\p}{\p-1}}$
number of stochastic gradients when $T$ is known. In contrast, our
result for the case of known $T$ (the second rate) has different
advantages in the following respects. \textbf{1. Algorithm.} Our result
is proved for $\SGD$ without mini-batching, which directly addresses
the problem explicitly described at the end of Section 5.3 in \citep{fatkhullin2025can}.
\textbf{2. Metric.} Our result is established for $\E\left[(\sum_{t=1}^{T}\left\Vert \nabla F(\bx_{t})\right\Vert ^{2}/T)^{\frac{\p}{2}}\right]$,
which is a stronger metric than $\E\left[\sum_{t=1}^{T}\left\Vert \nabla F(\bx_{t})\right\Vert ^{\p}/T\right]$
considered in \citep{fatkhullin2025can}, since $\sum_{t=1}^{T}\left\Vert \nabla F(\bx_{t})\right\Vert ^{\p}/T\leq(\sum_{t=1}^{T}\left\Vert \nabla F(\bx_{t})\right\Vert ^{2}/T)^{\frac{\p}{2}}$.
\textbf{3. Objective.} Our result is proved for a broader function
class, since it does not require the additional assumption of $\ell$-lower
smoothness. \textbf{4. Complexity.} Our result can be translated into
a sample complexity bound $L\Delta/\varepsilon^{2}+(\sigma\sqrt{L\Delta}/\varepsilon^{2})^{\frac{\p}{\p-1}}$\footnote{One may think that the correct sample complexity should be $L\Delta/\varepsilon^{2}+\left(\sigma/\varepsilon\right)^{\frac{\p}{\p-1}}+(\sigma\sqrt{L\Delta}/\varepsilon^{2})^{\frac{\p}{\p-1}}$.
However, without loss of generality, we can assume $\varepsilon\leq\sqrt{2L\Delta}$,
and note that $\left(\sigma/\varepsilon\right)^{\frac{\p}{\p-1}}\lesssim(\sigma\sqrt{L\Delta}/\varepsilon^{2})^{\frac{\p}{\p-1}}$
under this condition. To justify the condition $\varepsilon\leq\sqrt{2L\Delta}$,
we remark that $L$-upper smoothness implies $\left\Vert \nabla F(\bx)\right\Vert \leq\sqrt{2L(F(\bx)-F_{\star})}$.
Thus, if $\varepsilon>\sqrt{2L\Delta}$, the algorithm can simply
output $\bx_{1}$, so no nontrivial sample complexity bound is needed.}, which is better than the bound given in \citep{fatkhullin2025can}.
In particular, the bound in \citep{fatkhullin2025can} can be arbitrarily
worse than ours due to the additional parameter $\ell$.

Finally, we remark that our proof has a fundamentally different flavor
from that of \citep{fatkhullin2025can}, as discussed in the next
section.

\paragraph{Discussion on rates with unknown $\protect\p$.}

The last two rates in Theorem \ref{thm:main-SGD-exp} indicate that,
surprisingly, $\SGD$ converges without prior knowledge of the tail
index $\p$, provided $\p\in\left(\nicefrac{4}{3},2\right]$. In other
words, the corresponding stepsize choices are independent of $\p$,
although the resulting rates still depend on $\p$ through the analysis.
Interestingly, this phenomenon also applies to $\AdaGrad$ and $\AdaGradNorm$
as recently proved by \citep{liu2026can}, which however measures
convergence under the weaker metric $\E\left[\sum_{t=1}^{T}\left\Vert \nabla F(\bx_{t})\right\Vert /T\right]$.
\begin{thm}
\label{thm:main-SGD-exp-smo+lip}Under Assumptions \ref{assu:oracle}
(with $h=F$) and \ref{assu:SGD}, and an additional condition $\left\Vert \nabla F(\bx)\right\Vert \leq G,\forall\bx\in\R^{d}$,
for any $T\in\N$, $\SGD$ (Algorithm \ref{alg:SGD}) guarantees that

\begin{itemize}[leftmargin=*]

\item $\E\left[\frac{\sum_{t=1}^{T}\left\Vert \nabla F(\bx_{t})\right\Vert ^{2}}{T}\right]\lesssim\frac{G^{\frac{4}{\p}-2}(L\Delta)^{2-\frac{2}{\p}}\log T}{T^{2-\frac{2}{\p}}}+\frac{\sigma G^{\frac{2}{\p}-1}(L\Delta)^{1-\frac{1}{\p}}\log T}{T^{1-\frac{1}{\p}}}$
under $\left\{ \eta_{t}=\frac{\Delta^{\frac{2}{\p}-1}}{G^{\frac{4}{\p}-2}L^{2-\frac{2}{\p}}t^{\frac{2}{\p}-1}}\land\frac{\Delta^{\frac{1}{\p}}}{\sigma G^{\frac{2}{\p}-1}L^{1-\frac{1}{\p}}t^{\frac{1}{\p}}}\right\} _{t=1}^{T}$.

\item $\E\left[\frac{\sum_{t=1}^{T}\left\Vert \nabla F(\bx_{t})\right\Vert ^{2}}{T}\right]\lesssim\frac{G^{\frac{4}{\p}-2}(L\Delta)^{2-\frac{2}{\p}}}{T^{2-\frac{2}{\p}}}+\frac{\sigma G^{\frac{2}{\p}-1}(L\Delta)^{1-\frac{1}{\p}}}{T^{1-\frac{1}{\p}}}$
under $\left\{ \eta_{t}=\frac{\Delta^{\frac{2}{\p}-1}}{G^{\frac{4}{\p}-2}L^{2-\frac{2}{\p}}T^{\frac{2}{\p}-1}}\land\frac{\Delta^{\frac{1}{\p}}}{\sigma G^{\frac{2}{\p}-1}L^{1-\frac{1}{\p}}T^{\frac{1}{\p}}}\right\} _{t=1}^{T}$.

\item $\E\left[\frac{\sum_{t=1}^{T}\left\Vert \nabla F(\bx_{t})\right\Vert ^{2}}{T}\right]\lesssim\frac{\Delta}{\eta\sqrt{T}}+\frac{G^{2}L^{\frac{2\p-2}{2-\p}}\eta^{\frac{2\p-2}{2-\p}}}{T^{\frac{\p-1}{2-\p}}}+\frac{\sigma^{\p}G^{2-\p}L^{\p-1}\eta^{\p-1}}{T^{\frac{\p-1}{2}}}$
under $\left\{ \eta_{t}=\frac{\eta}{\sqrt{T}}\right\} _{t=1}^{T}$
for $\eta\in\left(0,+\infty\right)$.

\end{itemize}
\end{thm}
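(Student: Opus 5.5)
The plan is to run a one-step descent argument in which the usual quadratic term $\frac{L\eta_t^2}{2}\left\Vert \bg_t\right\Vert^2$, whose expectation may be infinite when $\p<2$, never appears alone. Instead it is capped through a $\min$ with a Lipschitz-type bound. The extra condition $\left\Vert \nabla F\right\Vert\le G$ makes $F$ $G$-Lipschitz. Hence, with $\bx_{t+1}-\bx_t=-\eta_t\bg_t$, we have two upper bounds on $F(\bx_{t+1})-F(\bx_t)$: the smoothness bound $-\eta_t\langle\nabla F(\bx_t),\bg_t\rangle+\frac{L\eta_t^2}{2}\left\Vert\bg_t\right\Vert^2$ from Assumption \ref{assu:SGD}, and the Lipschitz bound $G\eta_t\left\Vert\bg_t\right\Vert$. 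Taking their minimum and pulling out the linear term gives
\[
F(\bx_{t+1})-F(\bx_t)\le-\eta_t\langle\nabla F(\bx_t),\bg_t\rangle+\min\left\{2G\eta_t\left\Vert\bg_t\right\Vert,\tfrac{L\eta_t^2}{2}\left\Vert\bg_t\right\Vert^2\right\},
\]
where we used $G\eta_t\left\Vert\bg_t\right\Vert+\eta_t\langle\nabla F(\bx_t),\bg_t\rangle\le 2G\eta_t\left\Vert\bg_t\right\Vert$. The linear term has conditional expectation $-\eta_t\left\Vert\nabla F(\bx_t)\right\Vert^2$ by Assumption \ref{assu:oracle}.

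Next, split $\bg_t=\nabla F(\bx_t)+\bxi_t$. Using $\left\Vert\bg_t\right\Vert^2\le2\left\Vert\nabla F(\bx_t)\right\Vert^2+2\left\Vert\bxi_t\right\Vert^2$ and subadditivity of the $\min$ in each argument (up to constants), the $\min$ term is bounded by a gradient piece $m_t^{\nabla}\defeq\min\{G\eta_t\left\Vert\nabla F(\bx_t)\right\Vert,L\eta_t^2\left\Vert\nabla F(\bx_t)\right\Vert^2\}$ plus a noise piece $m_t^{\xi}\defeq\min\{G\eta_t\left\Vert\bxi_t\right\Vert,L\eta_t^2\left\Vert\bxi_t\right\Vert^2\}$, again up to constants.

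For the noise piece, use $\min\{a,b\}\le a^{2-\p}b^{\p-1}$, which gives
\[
m_t^{\xi}\lesssim G^{2-\p}L^{\p-1}\eta_t^{\p}\left\Vert\bxi_t\right\Vert^{\p},\qquad \E_{t-1}\left[m_t^{\xi}\right]\lesssim G^{2-\p}L^{\p-1}\eta_t^{\p}\sigma^{\p}.
\]
This is exactly where finiteness of only the $\p$-th moment suffices.

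For the gradient piece there are two routes. When $\eta_t\le\frac{1}{4L}$ (first two bullets, after checking this is implied by the chosen stepsizes or handling the regime separately), simply absorb $L\eta_t^2\left\Vert\nabla F\right\Vert^2\le\frac{\eta_t}{4}\left\Vert\nabla F\right\Vert^2$. For arbitrary $\eta$ (third bullet), interpolate instead: $\min\{a,b\}\le a^{1-\theta}b^{\theta}$ with $a=G\eta_t\left\Vert\nabla F\right\Vert$, $b=L\eta_t^2\left\Vert\nabla F\right\Vert^2$. Then Young's inequality against $\frac{\eta_t}{2}\left\Vert\nabla F\right\Vert^2$ produces a residual of the form $G^2(L\eta_t)^{\frac{2\p-2}{2-\p}}\eta_t$. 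Choosing the exponents so that this matches $G^{2}L^{\frac{2\p-2}{2-\p}}\eta^{\frac{2\p-2}{2-\p}}T^{-\frac{\p-1}{2-\p}}$ after summing is the step I expect to be fiddliest, but it is a one-parameter Young computation.

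Summing over $t$ and telescoping with $F(\bx_{T+1})\ge F_\star$ gives
\[
\sum_t\eta_t\E\left\Vert\nabla F(\bx_t)\right\Vert^2\lesssim\Delta+\sum_t\left(G^{2-\p}L^{\p-1}\sigma^{\p}\eta_t^{\p}+\text{gradient residual}\right).
\]
For constant $\eta_t$, divide by $T\eta_t$ directly. For decreasing $\eta_t$, lower bound $\eta_t\ge\eta_T$, which costs the $\log T$ in the first bullet via $\sum_t t^{-1}$-type sums. Plugging in the stated stepsizes, each being the minimum of two choices that balance $\Delta/(T\eta)$ against the $\sigma^{\p}$ term and against the $G$-term, yields the three bounds.

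The main obstacle is the first step: the $\min$ device must be justified, and constants must be tracked so that the Lipschitz branch, which carries no sign information, does not destroy the $-\eta_t\left\Vert\nabla F\right\Vert^2$ descent. If the $2G$ bound turns out loose, I would instead apply the $\min$ only to the noise-dependent part of the quadratic, after expanding $\left\Vert\nabla F+\bxi_t\right\Vert^2$.
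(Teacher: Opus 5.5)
Your skeleton is the paper's. The pointwise bound $-\eta_t\langle\nabla F(\bx_t),\bg_t\rangle+\min\{2G\eta_t\|\bg_t\|,\frac{L\eta_t^2}{2}\|\bg_t\|^2\}$, combined with $\min\{a,b\}\le a^{2-\p}b^{\p-1}$, is a pointwise form of the paper's H\"older-type descent lemma (Lemma \ref{lem:general-holder} with $\nu=\p$). Your Route 2 with $\theta=\p-1$ is exactly the paper's Young step.

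One justification needs repair. $\min$ is superadditive, not subadditive, so the split into $m_t^{\nabla}+m_t^{\xi}$ does not follow from the property you cite. It still holds up to a factor $4$: $\phi(x)\defeq\min\{2G\eta_t x,\frac{L\eta_t^2}{2}x^2\}$ is nondecreasing with $\phi(2x)\le 4\phi(x)$, hence $\phi(x+y)\le\phi(2\max\{x,y\})\le 4(\phi(x)+\phi(y))$. Alternatively, interpolate first and then use $\|\bg_t\|^{\p}\le 2^{\p-1}(\|\nabla F(\bx_t)\|^{\p}+\|\bxi_t\|^{\p})$, as the paper does.

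The genuine problem is Route 1 for the first two bullets. The stated stepsizes are not capped by $\frac{1}{4L}$. The first component equals $\frac{1}{L}\left(\frac{L\Delta}{G^{2}t}\right)^{\frac{2}{\p}-1}$ (with $T$ in place of $t$ in the second bullet), which is at least $\frac{1}{L}$ whenever $L\Delta\ge G^{2}T$. The second component is arbitrarily large for small $\sigma$. Nothing in the assumptions bounds $L\Delta/G^{2}$; take $F(x)=G\sqrt{x^{2}+G^{2}/L^{2}}$ with $x_1$ far from $0$. So $\eta_t\le\frac{1}{4L}$ is not implied, and ``handling the regime separately'' means using Route 2 anyway.

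More to the point, the leading term $G^{\frac{4}{\p}-2}(L\Delta)^{2-\frac{2}{\p}}/T^{2-\frac{2}{\p}}$ in those bullets cannot come out of Route 1. The first stepsize component is precisely the solution of $\frac{\Delta}{T\eta}=G^{2}L^{\frac{2\p-2}{2-\p}}\eta^{\frac{2\p-2}{2-\p}}$. That is, it balances $\Delta/(T\eta)$ against the Young residual, the ``$G$-term'' you invoke in your last step, which does not exist if you absorb via $\eta_t\le\frac{1}{4L}$.

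The fix is to drop Route 1 and use Route 2 for all three bullets. For arbitrary deterministic stepsizes this gives the paper's full statement:
\[
\E\left[\sum_{t=1}^{T}\eta_t\|\nabla F(\bx_t)\|^{2}\right]\lesssim\Delta+G^{2}L^{\frac{2\p-2}{2-\p}}\sum_{t=1}^{T}\eta_t^{\frac{\p}{2-\p}}+G^{2-\p}L^{\p-1}\sigma^{\p}\sum_{t=1}^{T}\eta_t^{\p}.
\]
For the time-varying choice, each of $G^{2}L^{\frac{2\p-2}{2-\p}}\eta_t^{\frac{\p}{2-\p}}$ and $G^{2-\p}L^{\p-1}\sigma^{\p}\eta_t^{\p}$ is at most $\Delta/t$. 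Together with $\eta_t\ge\eta_T$, this produces the $\log T$. The other two bullets follow by direct substitution.

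Finally, Young's inequality with exponent $\frac{2}{2-\p}$ degenerates at $\p=2$. That case must be taken from the standard finite-variance analysis, as the paper does.
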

\begin{rem}
Similar to Theorem \ref{thm:main-SGD-exp}, we can consider a time-varying
stepsize $\left\{ \eta_{t}=\frac{\eta}{\sqrt{t}}\right\} _{t=1}^{T}$
in Theorem \ref{thm:main-SGD-exp-smo+lip}. However, this leads to
a piecewise convergence rate, which we omit for brevity.
\end{rem}
One shortcoming of Theorem \ref{thm:main-SGD-exp} is its convergence
metric, which is weaker than the usual criterion $\E\left[\sum_{t=1}^{T}\left\Vert \nabla F(\bx_{t})\right\Vert ^{2}/T\right]$
for $\SGD$ as discussed previously (see (\ref{eq:main-ncvx-metric})).
In Theorem \ref{thm:main-SGD-exp-smo+lip}, we show that this is no
longer an issue when $F$ is also $G$-Lipschitz. To the best of our
knowledge, this is the first convergence result for nonconvex optimization
under heavy-tailed noise that uses $\E\left[\sum_{t=1}^{T}\left\Vert \nabla F(\bx_{t})\right\Vert ^{2}/T\right]$
as the metric. Notably, in this case, the classical stepsize $\left\{ \eta_{t}=\frac{\eta}{\sqrt{T}}\right\} _{t=1}^{T}$
ensures convergence without knowing $\p$ for every $\p\in\left(1,2\right]$.

\subsection{Convergence of Stochastic Gradient Descent with Momentum}

\begin{algorithm}[H]
\caption{\label{alg:SGDM}Stochastic Gradient Descent with Momentum ($\protect\SGDM$)}

\begin{algorithmic}[1]

\STATE \textbf{Input:} initial point $\bx_{1}\in\R^{d}$, stepsize
$\eta_{t}>0$, momentum parameter $\beta\in\left[0,1\right)$

\FOR{$t=1$ \textbf{to} $T$}

\STATE $\bx_{t+1}=\bx_{t}-\eta_{t}\bg_{t}+\beta(\bx_{t}-\bx_{t-1})$\hfill{}$\triangleright$where
$\bx_{0}\defeq\bx_{1}$

\ENDFOR

\end{algorithmic}
\end{algorithm}

We consider a popular practical variant of $\SGD$ in Algorithm \ref{alg:SGDM},
known as Stochastic Gradient Descent with Momentum ($\SGDM$), which
was introduced in \citep{POLYAK19641}. Clearly, $\SGDM$ recovers
$\SGD$ when the momentum parameter $\beta=0$.

\paragraph{Assumption.}

To analyze $\SGDM$ for nonconvex optimization, we require Assumption
\ref{assu:SGDM} below.
\begin{assumption}
\label{assu:SGDM}We make the following assumptions on the objective
function $F$:

\begin{itemize}[leftmargin=*]

\item $F$ is lower bounded, i.e., $F_{\star}\defeq\inf_{\bx\in\R^{d}}F(\bx)>-\infty$.

\item $F$ is $L$-smooth, i.e., $\exists L>0$ such that $\left\Vert \nabla F(\bx)-\nabla F(\by)\right\Vert \leq L\left\Vert \bx-\by\right\Vert ,\forall\bx,\by\in\R^{d}$.

\end{itemize}
\end{assumption}
\begin{rem}
In comparison to Assumption \ref{assu:SGD}, we now require $F$ to
be smooth in the classical sense.
\end{rem}
Under Assumption \ref{assu:SGDM}, still let $\Delta\defeq F(\bx_{1})-F_{\star}$.
We prove the following convergence rates for $\SGDM$.
\begin{thm}
\label{thm:main-SGDM-exp}Under Assumptions \ref{assu:oracle} (with
$h=F$) and \ref{assu:SGDM}, for any $T\in\N$, $\SGDM$ (Algorithm
\ref{alg:SGDM}) guarantees that

\begin{itemize}[leftmargin=*]

\item $\E\left[\left(\frac{\sum_{t=1}^{T}\left\Vert \nabla F(\bx_{t})\right\Vert ^{2}}{T}\right)^{\frac{\p}{2}}\right]\lesssim\left(\frac{L\Delta\log^{\frac{2}{\p}}T}{(1-\beta)T}+\frac{\sigma^{2}}{T^{1-\frac{1}{\p}}}+\frac{\sigma\sqrt{L\Delta}\log^{\frac{2}{\p}}T}{(1-\beta)^{\frac{1}{\p}-\frac{1}{2}}T^{1-\frac{1}{\p}}}\right)^{\frac{\p}{2}}$
under $\left\{ \eta_{t}=\frac{(1-\beta)^{2}}{2L}\land\frac{(1-\beta)^{\frac{1}{\p}+\frac{1}{2}}\sqrt{\Delta/L}}{\sigma t^{\frac{1}{\p}}}\right\} _{t=1}^{T}$
and $\beta\in\left[0,1\right)$.

\item $\E\left[\left(\frac{\sum_{t=1}^{T}\left\Vert \nabla F(\bx_{t})\right\Vert ^{2}}{T}\right)^{\frac{\p}{2}}\right]\lesssim\left(\frac{L\Delta}{(1-\beta)T}+\frac{\sigma^{2}}{T^{2-\frac{2}{\p}}}+\frac{\sigma\sqrt{L\Delta}}{(1-\beta)^{\frac{1}{\p}-\frac{1}{2}}T^{1-\frac{1}{\p}}}\right)^{\frac{\p}{2}}$
under $\left\{ \eta_{t}=\frac{(1-\beta)^{2}}{2L}\land\frac{(1-\beta)^{\frac{1}{\p}+\frac{1}{2}}\sqrt{\Delta/L}}{\sigma T^{\frac{1}{\p}}}\right\} _{t=1}^{T}$
and $\beta\in\left[0,1\right)$.

\item $\E\left[\left(\frac{\sum_{t=1}^{T}\left\Vert \nabla F(\bx_{t})\right\Vert ^{2}}{T}\right)^{\frac{\p}{2}}\right]\lesssim\left(\frac{\Delta}{\eta\sqrt{T}}+\frac{\sigma^{2}}{\sqrt{T}}+\frac{\sigma^{2}L\eta\log^{\frac{2\chi(\p)}{\p}}T}{(1-\beta)^{\frac{2}{\p}-1}T^{\frac{3}{2}-\frac{2}{\p}}}\right)^{\frac{\p}{2}}$
under $\left\{ \eta_{t}=\frac{(1-\beta)\eta}{\sqrt{t}}\right\} _{t=1}^{T}$
and $\beta\in\left[0,1\right)$ for $\eta\in\left(0,\frac{1-\beta}{2L}\right]$
where $\chi(\p)\defeq\mathds{1}\left[\p=2\right]$, when $\p\in\left(\nicefrac{4}{3},2\right]$.

\item $\E\left[\left(\frac{\sum_{t=1}^{T}\left\Vert \nabla F(\bx_{t})\right\Vert ^{2}}{T}\right)^{\frac{\p}{2}}\right]\lesssim\left(\frac{\Delta}{\eta\sqrt{T}}+\frac{\sigma^{2}}{T^{2-\frac{2}{\p}}}+\frac{\sigma^{2}L\eta}{(1-\beta)^{\frac{2}{\p}-1}T^{\frac{3}{2}-\frac{2}{\p}}}\right)^{\frac{\p}{2}}$
under $\left\{ \eta_{t}=\frac{(1-\beta)\eta}{\sqrt{T}}\right\} _{t=1}^{T}$
and $\beta\in\left[0,1\right)$ for $\eta\in\left(0,\frac{1-\beta}{2L}\right]$
, when $\p\in\left(\nicefrac{4}{3},2\right]$.

\end{itemize}
\end{thm}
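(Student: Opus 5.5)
The plan is to reduce $\SGDM$ to an $\SGD$-like recursion on an auxiliary sequence and then reuse the framework behind Theorem \ref{thm:main-SGD-exp}.

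\textbf{Reduction.} Define
\[
\bz_{t}\defeq\bx_{t}+\frac{\beta}{1-\beta}(\bx_{t}-\bx_{t-1}),\qquad \bz_{1}=\bx_{1}.
\]
A direct computation from the update gives $\bz_{t+1}=\bz_{t}-\frac{\eta_{t}}{1-\beta}\bg_{t}$ when $\eta_{t}$ is constant; time-varying $\eta_t$ produces an extra term $\frac{\beta}{1-\beta}(\eta_{t-1}-\eta_{t})\bm_{t-1}$, which is controlled by monotonicity. Here $\bm_{t}\defeq\frac{\bx_{t}-\bx_{t+1}}{\eta_{t}}$ is essentially the exponential average $\sum_{s\le t}\beta^{t-s}\eta_s\bg_{s}/\eta_t$.

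Applying $L$-smoothness at $\bz_{t}$ and writing $\bg_{t}=\nabla F(\bx_{t})+\bxi_{t}$ yields
\[
F(\bz_{t+1})\le F(\bz_{t})-\tfrac{\eta_{t}}{1-\beta}\langle\nabla F(\bz_{t}),\nabla F(\bx_{t})+\bxi_{t}\rangle+\tfrac{L\eta_{t}^{2}}{2(1-\beta)^{2}}\|\bg_{t}\|^{2}.
\]
Replace $\nabla F(\bz_{t})$ by $\nabla F(\bx_{t})$ at cost $L\|\bz_{t}-\bx_{t}\|\,\|\nabla F(\bx_{t})\|$. Since $\|\bz_{t}-\bx_{t}\|=\frac{\beta}{1-\beta}\|\bx_{t}-\bx_{t-1}\|$, Young's inequality leaves a sum of squared momentum displacements. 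These are bounded by $\sum_{s}\beta^{t-s}\eta_{s}^{2}\|\bg_{s}\|^{2}/(1-\beta)$ through the geometric-series/Cauchy–Schwarz bound. Summing over $t$ and using $\eta_{t}\le(1-\beta)^{2}/(2L)$ to absorb the resulting $\|\nabla F\|^2$ pieces gives a deterministic pathwise inequality:
\[
\sum_{t}\eta_{t}\|\nabla F(\bx_{t})\|^{2}\lesssim(1-\beta)\Delta-\sum_{t}\eta_{t}\langle\nabla F(\bz_{t}),\bxi_{t}\rangle+\frac{L}{1-\beta}\sum_{t}\eta_{t}^{2}\|\bxi_{t}\|^{2}.
\]
The $(1-\beta)^{-1}$ factors are exactly those that appear in the stated rates. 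The martingale term uses $\nabla F(\bz_{t})$, which is $\F_{t-1}$-measurable, so this is the same structure as the $\SGD$ analysis.

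\textbf{Heavy-tailed step.} Reuse the machinery from the proof of Theorem \ref{thm:main-SGD-exp} (Section \ref{sec:analysis}), i.e.\ take expectations of the $\frac{\p}{2}$ power of the pathwise inequality.

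For the noise-square sum, use $(\sum a_{t})^{\p/2}\le\sum a_{t}^{\p/2}$, so
\[
\E\Big[\big(\textstyle\sum_{t}\eta_{t}^{2}\|\bxi_{t}\|^{2}\big)^{\p/2}\Big]\le\sigma^{\p}\textstyle\sum_{t}\eta_{t}^{\p}.
\]

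For the martingale term $\sum_{t}\eta_{t}\langle\nabla F(\bz_{t}),\bxi_{t}\rangle$, use a Burkholder/von Bahr–Esseen type bound for $\p$-th moments. This gives
\[
\E\Big[\big|\textstyle\sum_{t}\eta_{t}\langle\nabla F(\bz_{t}),\bxi_{t}\rangle\big|^{\p/2}\Big]\lesssim\E\Big[\big(\textstyle\sum_{t}\eta_{t}^{\p}\|\nabla F(\bz_{t})\|^{\p}\sigma^{\p}\big)^{1/2}\Big].
\]
Next relate $\|\nabla F(\bz_{t})\|$ back to $\|\nabla F(\bx_{t})\|$ plus $L\|\bz_{t}-\bx_{t}\|$. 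Bound the result by $\sigma\max_t\eta_t^{1/2}\big(\sum\eta_t\|\nabla F\|^2\big)^{\cdot}$-type quantities, then apply Young's inequality to absorb a fraction into the left-hand side. This absorption produces the cross term $\sigma\sqrt{L\Delta}/T^{1-1/\p}$, and the pure noise term produces $\sigma^{2}/T^{2-2/\p}$ for constant stepsizes. For $\eta_{t}\propto t^{-1/\p}$, dividing by $\sum\eta_{t}\ge\eta_{T}T$ is what introduces the $\log^{2/\p}T$ factors.

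\textbf{Stepsize choices and main obstacle.} The four bullets follow by plugging in the stepsizes. For the unknown-$\p$ choices $\eta_{t}=(1-\beta)\eta/\sqrt{t}$, the sum $\sum_{t}\eta_{t}^{\p}/(\sum_t\eta_t)^{\p/2}$ converges only if $\p>4/3$, which explains that restriction, with a $\log$ factor at $\p=2$.

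The hardest step is controlling the momentum displacement $\|\bz_{t}-\bx_{t}\|$, which contains past heavy-tailed noise, inside the martingale coupling without requiring second moments. I would bound $\|\nabla F(\bz_{t})\|\le\|\nabla F(\bx_{t})\|+\frac{L\beta}{1-\beta}\|\bx_{t}-\bx_{t-1}\|$ and treat the displacement part via the $\p/2$-subadditivity again. This yields $\sum_{s}\beta^{t-s}\eta_{s}^{\p}\sigma^{\p}$-type terms. Their geometric sums cost the factor $(1-\beta)^{-(2/\p-1)}$ visible in the theorem, and the stepsize cap $(1-\beta)^{2}/(2L)$ is what keeps the absorption step valid.
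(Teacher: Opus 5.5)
Your reduction to $\bz_t=(\bx_t-\beta\bx_{t-1})/(1-\beta)$ and the ``sum, raise to $\frac{\p}{2}$, take expectations'' scheme match the paper. However, your treatment of the momentum displacement loses powers of $1-\beta$, so the argument cannot deliver the stated rates. (Incidentally, $\bz_{t+1}=\bz_t-\frac{\eta_t}{1-\beta}\bg_t$ holds exactly for time-varying $\eta_t$ in Algorithm~\ref{alg:SGDM}, so there is no extra term.)

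You bound $\|\bx_t-\bx_{t-1}\|^2=\|\sum_{s<t}\beta^{t-1-s}\eta_s\bg_s\|^2$ by Cauchy--Schwarz with the full $\bg_s$, noise included. Carried through (summing over $t$ with nonincreasing $\eta_t$), this leaves the pathwise noise term $\frac{\beta^2L^2}{(1-\beta)^4}\sum_t\eta_t^3\|\bxi_t\|^2\le\frac{\beta^2L}{2(1-\beta)^2}\sum_t\eta_t^2\|\bxi_t\|^2$. That is not the $\frac{L}{1-\beta}\sum_t\eta_t^2\|\bxi_t\|^2$ you assert; that smaller order is reached only after exploiting cancellation among the $\bxi_s$. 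After raising to $\frac{\p}{2}$ and taking expectations you get $\frac{L^{\p/2}\sigma^{\p}}{(1-\beta)^{\p}}\sum_t\eta_t^{\p}$, where Theorem~\ref{thm:SGDM-exp} has $\frac{L^{\p/2}\sigma^{\p}}{1-\beta}\sum_t\eta_t^{\p}$. With the known-$\p$ stepsizes, the last term of the first two bullets then becomes $\sigma\sqrt{L\Delta}/((1-\beta)^{3/2-1/\p}T^{1-1/\p})$ instead of carrying exponent $\frac1\p-\frac12$.

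Concretely, take $\p=2$, $L\Delta=\sigma^2$ and $T=(1-\beta)^{-2}$. Both known-$\p$ stepsizes equal $(1-\beta)^2/(2L)$. Even keeping $\eta_t^3$, your bound on this term alone contributes order $\beta^2\sigma^2$ to $\frac1T\sum_t\E\|\nabla F(\bx_t)\|^2$, while the claimed rates are $O((1-\beta)\sigma^2\log\frac{1}{1-\beta})$. (For the $\p$-free stepsizes of the last two bullets, keeping $\eta_t^3$ and using $L\eta\le\frac{1-\beta}{2}$ happens to push this term below the $\sigma^2$ terms, but nothing like that rescues the first two.)

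The missing ingredient is martingale cancellation in the accumulated noise. In Lemma~\ref{lem:SGDM-core} the paper splits $\bg_s=\nabla F(\bx_s)+\bxi_s$ \emph{before} Cauchy--Schwarz and applies it only to the gradient part, which the cap $\eta_t\le\frac{(1-\beta)^2}{2L}$ then absorbs. It keeps the noise as the vector $\sum_{s<t}\beta^{t-1-s}\gamma_s\bxi_s$, with $\gamma_s=\frac{\eta_s}{1-\beta}$ and coefficient $2\beta^2(1-\beta)L$ coming from $\gamma_tL\le\frac{1-\beta}{2}$. After raising to $\frac{\p}{2}$, it applies the von Bahr--Esseen inequality for vector-valued martingale differences: $\E[\|\sum_{s<t}\beta^{t-1-s}\gamma_s\bxi_s\|^{\p}]\le2^{2-\p}\sigma^{\p}\sum_{s<t}\beta^{\p(t-1-s)}\gamma_s^{\p}$. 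The geometric weights therefore enter as $\beta^{\p(t-1-s)}$, and the overall factor multiplying $L^{\p/2}\sigma^{\p}\sum_s\gamma_s^{\p}$ is $(1-\beta)^{\p/2}/(1-\beta^{\p})\le(1-\beta)^{\p/2-1}$. Along your Cauchy--Schwarz route the same factor is $(1-\beta)^{-\p/2}$, an extra $(1-\beta)^{1-\p}$. This step is exactly the source of the factors $(1-\beta)^{-(\frac1\p-\frac12)}$ and $(1-\beta)^{-(\frac2\p-1)}$ in the theorem.

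The paper also avoids your detour through $\|\nabla F(\bz_t)\|\le\|\nabla F(\bx_t)\|+L\|\bz_t-\bx_t\|$ in the martingale term, which would put past heavy-tailed noise back under the square root. Instead it writes $-\langle\nabla F(\bz_t),\nabla F(\bx_t)\rangle=\frac12\|\nabla F(\bz_t)-\nabla F(\bx_t)\|^2-\frac12\|\nabla F(\bz_t)\|^2-\frac12\|\nabla F(\bx_t)\|^2$. This keeps $\sum_t\gamma_t\|\nabla F(\bz_t)\|^2$ on the left-hand side, so $\sum_t\gamma_t\langle\bxi_t,\nabla F(\bz_t)\rangle$ is absorbed via Lemma~\ref{lem:core}, H\"older, and AM--GM exactly as in the $\SGD$ proof.
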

Theorem \ref{thm:main-SGDM-exp} provides the first convergence rate
for $\SGDM$ under heavy-tailed noise, thereby extending the best
existing result due to \citep{NEURIPS2020_d3f5d4de}, which applies
to $\SGDM$ in the finite-variance case (i.e., $\p=2$). When $\beta=0$,
$\SGDM$ reduces to $\SGD$, and Theorem \ref{thm:main-SGDM-exp}
recovers the previous Theorem \ref{thm:main-SGD-exp}, albeit under
slightly different assumptions.

\section{A New Analysis Framework for Heavy-Tailed Stochastic Optimization\label{sec:analysis}}

In this section, we provide a new analysis framework for studying
stochastic gradient methods in heavy-tailed stochastic optimization.
To simplify the discussion and focus on the high-level idea, we assume
throughout that $\left\Vert \cdot\right\Vert =\left\Vert \cdot\right\Vert _{2}$
and $\left\{ \eta_{t}=\eta\right\} _{t=1}^{T}$. Moreover, for convex
optimization, we set the mirror map $\psi=\frac{1}{2}\left\Vert \cdot\right\Vert _{2}^{2}$
in $\SMD$ to recover $\SGD$ and take $r=0$ in the objective $F=f+r$.

Our proof relies critically on the following lemma, whose proof is
deferred to Appendix \ref{sec:proof-core-lemma}.
\begin{lem}
\label{lem:core}Under Assumption \ref{assu:oracle}, for any sequence
of random vectors $\left\{ \by_{t}\in\F_{t-1}\right\} _{t\in\N}$,
we have
\[
\E\left[\left|\sum_{s=1}^{t}\left\langle \bxi_{s},\by_{s}\right\rangle \right|^{\frac{\p}{2}}\right]\leq2^{1-\frac{\p}{2}}\sigma^{\frac{\p}{2}}\sqrt{\sum_{s=1}^{t}\E\left[\left\Vert \by_{s}\right\Vert ^{\p}\right]},\forall t\in\N.
\]
\end{lem}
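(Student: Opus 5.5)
Let $S_t\defeq\sum_{s=1}^{t}X_s$ with $X_s\defeq\left\langle \bxi_{s},\by_{s}\right\rangle$. Since $\by_s\in\F_{s-1}$ and $\E_{s-1}[\bxi_s]=\bzero$, $\{S_t\}$ is a martingale with increments satisfying $|X_s|\le\left\Vert \bxi_s\right\Vert_\star\left\Vert \by_s\right\Vert$. The plan is to control $|S_t|^{\p/2}$ by a square-function argument and then use the $\p$-th moment bound on the noise. Concretely, I will write
\[
|S_t|^{\frac{\p}{2}}=\left(\frac{S_t^{2}}{\sqrt{Q_t}}\right)^{\frac{\p}{2}}Q_t^{\frac{\p}{4}},\qquad Q_t\defeq\sum_{s=1}^{t}|X_s|^{\p},
\]
and apply Cauchy--Schwarz in expectation:
\[
\E\bigl[|S_t|^{\frac{\p}{2}}\bigr]\le\sqrt{\E\Bigl[\frac{|S_t|^{\p}}{\sqrt{Q_t}^{\,\cdot}}\cdot\Bigr]}\cdots,
\]
more precisely $\E[|S_t|^{\p/2}]\le\sqrt{\E[|S_t|^{\p}/Q_t^{\p/2}\cdot Q_t^{\p/2-1}\cdots]}$. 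The clean version I aim for is
\[
\E\bigl[|S_t|^{\frac{\p}{2}}\bigr]=\E\Bigl[\frac{|S_t|^{\frac{\p}{2}}}{Q_t^{1/2}}Q_t^{1/2}\Bigr]\le\sqrt{\E\Bigl[\frac{|S_t|^{\p}}{Q_t}\Bigr]}\sqrt{\E[Q_t]},
\]
and $\E[Q_t]\le\sigma^{\p}\sum_{s}\E[\left\Vert \by_s\right\Vert^{\p}]$ by conditioning, giving exactly the $\sigma^{\p/2}\sqrt{\sum_s\E\|\by_s\|^{\p}}$ factor. So the whole lemma reduces to the self-normalized bound $\E[|S_t|^{\p}/Q_t]\le 2^{2-\p}$ (with $0/0\defeq0$).

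For this self-normalized bound, I will first use the elementary inequality $|a+b|^{\p}\le|a|^{\p}+\p\,\mathrm{sgn}(a)|a|^{\p-1}b+2^{2-\p}|b|^{\p}$ for $\p\in(1,2]$ (the standard sharp inequality behind von Bahr--Esseen type bounds; the constant $2^{2-\p}$ is what appears in the statement). Applying it with $a=S_{s-1}$, $b=X_s$ and summing telescopically gives $|S_t|^{\p}\le 2^{2-\p}Q_t+\sum_{s}\p\,\mathrm{sgn}(S_{s-1})|S_{s-1}|^{\p-1}X_s$, i.e.\ a martingale term plus $2^{2-\p}Q_t$. Dividing by $Q_t$ breaks the martingale structure, since $Q_t$ depends on future increments. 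To handle this I will instead run the telescoping on the ratio directly: consider $\Phi_s\defeq|S_s|^{\p}/Q_s$ and try to show $\E[\Phi_s-\Phi_{s-1}]\le 2^{2-\p}\E[|X_s|^{\p}/Q_s]$-type increments whose sum telescopes via $\sum_s |X_s|^{\p}/Q_s\lesssim$... which gives a log, not a constant. So this route is suspicious.

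The main obstacle is therefore this step, and my first fallback is symmetrization: since the noise is conditionally mean zero but not symmetric, I would try a decoupling/sign trick only for the estimate, or alternatively replace $Q_t$ by the predictable quantity $V_t\defeq\sum_s\left\Vert \by_s\right\Vert^{\p}\E_{s-1}\|\bxi_s\|_\star^{\p}$ plus a deterministic shift, using $\E[|S_t|^{\p}]\le 2^{2-\p}\E[V_t]$ (which does follow from the telescoping above, since the cross terms have zero expectation) combined with Hölder: $\E[|S_t|^{\p/2}]\le\sqrt{\E|S_t|^{\p}}$ directly by Jensen. Indeed this last observation suggests the simplest proof: Jensen gives $\E[|S_t|^{\p/2}]\le(\E|S_t|^{\p})^{1/2}\le(2^{2-\p}\sigma^{\p}\sum_s\E\|\by_s\|^{\p})^{1/2}=2^{1-\p/2}\sigma^{\p/2}\sqrt{\sum_s\E\|\by_s\|^{\p}}$, matching the constant exactly. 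So I will commit to this: prove the scalar inequality $|a+b|^{\p}\le|a|^{\p}+\p\,\mathrm{sgn}(a)|a|^{\p-1}b+2^{2-\p}|b|^{\p}$, telescope, take expectations to kill the cross terms (needing integrability, handled by truncation/ Fatou if $\E\|\by_s\|^{\p}$ is finite, else trivial), and finish with Jensen. The only genuinely delicate step is verifying the scalar inequality with constant $2^{2-\p}$, which I would do by scaling to $a=1$ and analyzing $g(b)=|1+b|^{\p}-1-\p b$ against $|b|^{\p}$, maximizing the ratio (attained at $b=-2$).
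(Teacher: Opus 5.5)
Your committed route is the paper's proof: Jensen/H\"older gives $\E[|S_t|^{p/2}]\le(\E[|S_t|^{p}])^{1/2}$, then the martingale von Bahr--Esseen bound $\E[|S_t|^{p}]\le 2^{2-p}\sum_{s\le t}\E[|X_s|^{p}]$, then $\E[|X_s|^{p}]\le\sigma^{p}\E[\|\by_s\|^{p}]$ by conditioning. The only difference is that the paper cites von Bahr--Esseen (with the constant $2^{2-p}$ via Pinelis), whereas you propose to prove it from the scalar inequality $|a+b|^{p}\le|a|^{p}+p\,\mathrm{sgn}(a)|a|^{p-1}b+2^{2-p}|b|^{p}$ plus telescoping. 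That is a valid standard argument. The self-normalized detour you abandoned is unnecessary.

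One correction to your plan for verifying that scalar inequality. With $a=1$, the ratio $(|1+b|^{p}-1-pb)/|b|^{p}$ is \emph{not} maximized at $b=-2$. There it equals $p\,2^{1-p}$, which is strictly below $2^{2-p}$ for $p<2$. The supremum lies further out: for $p=1.5$ it is about $1.31$, near $b\approx-7$. So an argument that locates the maximum at $b=-2$ would be wrong, even though the inequality itself is true.

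A clean proof goes as follows.
\begin{itemize}
\item Write $|1+b|^{p}-1-pb=\int_0^1 p\,[f(1+\tau b)-f(1)]\,b\,d\tau$, where $f(u)=\mathrm{sgn}(u)|u|^{p-1}$.
\item Note that $f$ is $(p-1)$-H\"older with constant $2^{2-p}$. For same-sign arguments this follows from subadditivity of $u\mapsto u^{p-1}$; for opposite-sign arguments it follows from concavity of $u\mapsto u^{p-1}$. The extremal case is $u=-v$, which is where your $b=-2$ intuition really comes from.
\item This gives the bound $\int_0^1 p\,2^{2-p}|\tau b|^{p-1}|b|\,d\tau=2^{2-p}|b|^{p}$.
\item Rescale by $|a|^{p}$ to get the general case; $a=0$ is trivial.
\end{itemize}

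Also, no truncation is needed for the cross terms. If $\sum_s\E[\|\by_s\|^{p}]<\infty$, H\"older gives $\E[|S_{s-1}|^{p-1}|X_s|]<\infty$. Hence $\E_{s-1}\big[\mathrm{sgn}(S_{s-1})|S_{s-1}|^{p-1}X_s\big]=0$ holds directly.
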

We now describe the high-level idea behind our analysis. Our principle
is surprisingly simple and consists of only three steps:
\begin{eqnarray}
1.\text{ Sum the descent inequality.} & 2.\text{ Raise the bound to the power }\frac{\p}{2}. & 3.\text{ Take expectations.}\label{eq:main-idea}
\end{eqnarray}
To illustrate what this means, we use both convex and nonconvex optimization
as examples below.

\subsection{Convex Optimization}

Consider the classical analysis of $\SGD$, a common approach is to
first establish a descent inequality of the form (assuming $M=0$
in Assumption \ref{assu:SMD} and $\eta\lesssim\frac{1}{L}$):{\small 
\begin{equation}
\eta\Delta_{t+1}\lesssim\left\Vert \bx_{\star}-\bx_{t}\right\Vert ^{2}-\left\Vert \bx_{\star}-\bx_{t+1}\right\Vert ^{2}+\eta^{2}\left\Vert \bxi_{t}\right\Vert ^{2}+\eta\left\langle \bxi_{t},\bx_{\star}-\bx_{t}\right\rangle \text{ where }\Delta_{t}\defeq F(\bx_{t})-F_{\star}.\label{eq:main-cvx-descent}
\end{equation}
}Then, one takes expectations on both sides of (\ref{eq:main-cvx-descent})
and sums it from $t=1$ to $T$ to conclude.

However, under heavy-tailed noise, the second step breaks, since $\E[\left\Vert \bxi_{t}\right\Vert ^{2}]$
can be $+\infty$. Instead, our solution is to first sum (\ref{eq:main-cvx-descent})
from $t=1$ to $T$ before finally taking expectations. Although this
step seems simple, it allows us to perform additional operations.
Specifically, as described above, we raise the sum of the descent
inequalities to the power $\frac{\p}{2}$ to obtain (where $\Breg\defeq\left\Vert \bx_{\star}-\bx_{1}\right\Vert ^{2}$){\small 
\[
\left(\eta\sum_{t=1}^{T}\Delta_{t+1}\right)^{\frac{\p}{2}}\lesssim\left(\Breg+\eta^{2}\sum_{t=1}^{T}\left\Vert \bxi_{t}\right\Vert ^{2}+\eta\sum_{t=1}^{T}\left\langle \bxi_{t},\bx_{\star}-\bx_{t}\right\rangle \right)^{\frac{\p}{2}}\lesssim\Breg^{\frac{\p}{2}}+\eta^{\p}\sum_{t=1}^{T}\left\Vert \bxi_{t}\right\Vert ^{\p}+\left|\eta\sum_{t=1}^{T}\left\langle \bxi_{t},\bx_{\star}-\bx_{t}\right\rangle \right|^{\frac{\p}{2}},
\]
}where, in the second step, we repetitively use $(a+b)^{\frac{\p}{2}}\leq a^{\frac{\p}{2}}+b^{\frac{\p}{2}},\forall a,b\geq0$.
As one can see, the term $\left\Vert \bxi_{t}\right\Vert ^{\p}$ has
the correct exponent $\p$, and we can safely take expectations on
both sides of the above inequality. To handle the last term, we can
apply Lemma \ref{lem:core} with $\by_{t}=\eta(\bx_{\star}-\bx_{t})$.
As such, there is{\small 
\[
\E\left[\left(\eta\sum_{t=1}^{T}\Delta_{t+1}\right)^{\frac{\p}{2}}\right]\lesssim\Breg^{\frac{\p}{2}}+\sigma^{\p}\eta^{\p}T+\sigma^{\frac{\p}{2}}\sqrt{\sum_{t=1}^{T}\E\left[\eta^{\p}\left\Vert \bx_{\star}-\bx_{t}\right\Vert ^{\p}\right]}\lesssim\Breg^{\frac{\p}{2}}+\sigma^{\p}\eta^{\p}T+\Breg_{\max}^{\frac{\p}{4}}\sigma^{\frac{\p}{2}}\eta^{\frac{\p}{2}}\sqrt{T},
\]
}where $\Breg_{\max}\defeq\max_{t\in\left[T\right]}(\E[\left\Vert \bx_{\star}-\bx_{t}\right\Vert ^{\p}])^{\frac{2}{\p}}$.
Temporarily assuming $\Breg_{\max}\lesssim\Breg$, we complete the
proof by using $\Breg^{\frac{\p}{4}}\sigma^{\frac{\p}{2}}\eta^{\frac{\p}{2}}\sqrt{T}\lesssim\Breg^{\frac{\p}{2}}+\sigma^{\p}\eta^{\p}T$
and substituting the value of $\eta$ from Theorem \ref{thm:main-SMD-cvx-exp}.

We emphasize that the assumption $\Breg_{\max}\lesssim\Breg$ is made
only to simplify the discussion. It is not required in the full proofs.
For more details, we refer the reader to Appendix \ref{sec:proof-cvx}.

\subsection{Nonconvex Optimization}

For $\SGD$ in nonconvex optimization, the proof again follows the
same three-step strategy outlined above. Assuming $\eta\leq\frac{1}{L}$
and applying the first two steps described in (\ref{eq:main-idea}),
we obtain{\small 
\[
\left(\eta\sum_{t=1}^{T}\left\Vert \nabla F(\bx_{t})\right\Vert ^{2}\right)^{\frac{\p}{2}}\lesssim\Delta^{\frac{\p}{2}}+L^{\frac{\p}{2}}\eta^{\p}\sum_{t=1}^{T}\left\Vert \bxi_{t}\right\Vert ^{\p}+\left|\eta\sum_{t=1}^{T}\left\langle \bxi_{t},\nabla F(\bx_{t})\right\rangle \right|^{\frac{\p}{2}}\text{ where }\Delta\defeq F(\bx_{1})-F_{\star}.
\]
}Take expectations on both sides of the above inequality and apply
Lemma \ref{lem:core} with $\by_{t}=\eta\nabla F(\bx_{t})$ to have{\small 
\[
\E\left[\left(\eta\sum_{t=1}^{T}\left\Vert \nabla F(\bx_{t})\right\Vert ^{2}\right)^{\frac{\p}{2}}\right]\lesssim\Delta^{\frac{\p}{2}}+L^{\frac{\p}{2}}\sigma^{\p}\eta^{\p}T+\sigma^{\frac{\p}{2}}\sqrt{\sum_{t=1}^{T}\E\left[\eta^{\p}\left\Vert \nabla F(\bx_{t})\right\Vert ^{\p}\right]}.
\]
}Finally, using $\sum_{t=1}^{T}\left\Vert \nabla F(\bx_{t})\right\Vert ^{\p}\leq T^{\frac{2-\p}{2}}(\sum_{t=1}^{T}\left\Vert \nabla F(\bx_{t})\right\Vert ^{2})^{\frac{\p}{2}}$
(by Cauchy-Schwarz inequality) together with a self-bounding argument,
we obtain $\E[(\eta\sum_{t=1}^{T}\left\Vert \nabla F(\bx_{t})\right\Vert ^{2})^{\frac{\p}{2}}]\lesssim\Delta^{\frac{\p}{2}}+\sigma^{\p}\eta^{\frac{\p}{2}}T^{\frac{2-\p}{2}}+L^{\frac{\p}{2}}\sigma^{\p}\eta^{\p}T$.
Substituting the value of $\eta$ from Theorem \ref{thm:main-SGD-exp},
we complete the proof. 

For all omitted details, we kindly refer the reader to Appendix \ref{sec:proof-ncvx}.

\section{Limitations\label{sec:limitation}}

\paragraph{Convergence metric.}

As mentioned previously, all of our results are established for $\E[\left(\metric\right)^{\frac{\p}{2}}]$,
where $\metric$ is a commonly adopted convergence metric. Although
we believe that, in general, $\frac{\p}{2}$ is the best possible
exponent for ensuring a finite-time rate, it is important to understand
under what additional conditions it can be relaxed, which we leave
for future work.

\paragraph{$\protect\SGD$ in nonconvex optimization.}

Combining (\ref{eq:main-ncvx-metric}) with Theorem \ref{thm:main-SGD-exp}
shows that $\SGD$ guarantees only $\E[\sum_{t=1}^{T}\left\Vert \nabla F(\bx_{t})\right\Vert _{2}/T]\lesssim1/T^{\frac{\p-1}{2\p}}$.
However, this rate is strictly worse than the existing minimax lower
bound $1/T^{\frac{\p-1}{3\p-2}}$ mentioned in Section \ref{subsec:related-work}.
Closing this gap is also important in our view.

\bibliographystyle{alpha}
\bibliography{ref}

\newcommand{\etalchar}[1]{$^{#1}$}
\begin{thebibliography}{GEGGM21}

\bibitem[ACD{\etalchar{+}}23]{arjevani2023lower}
Yossi Arjevani, Yair Carmon, John~C Duchi, Dylan~J Foster, Nathan Srebro, and Blake Woodworth.
\newblock Lower bounds for non-convex stochastic optimization.
\newblock {\em Mathematical Programming}, 199(1-2):165--214, 2023.

\bibitem[ACS{\etalchar{+}}24]{ahn2024linear}
Kwangjun Ahn, Xiang Cheng, Minhak Song, Chulhee Yun, Ali Jadbabaie, and Suvrit Sra.
\newblock Linear attention is (maybe) all you need (to understand transformer optimization).
\newblock In {\em The Twelfth International Conference on Learning Representations}, 2024.

\bibitem[AP95]{AFST_1995_6_4_4_705_0}
Dominique Az\'e and Jean-Paul Penot.
\newblock Uniformly convex and uniformly smooth convex functions.
\newblock {\em Annales de la Facult\'e des sciences de Toulouse : Math\'ematiques}, Ser. 6, 4(4):705--730, 1995.

\bibitem[AYS{\etalchar{+}}25]{pmlr-v258-armacki25a}
Aleksandar Armacki, Shuhua Yu, Pranay Sharma, Gauri Joshi, Dragana Bajovic, Dusan Jakovetic, and Soummya Kar.
\newblock High-probability convergence bounds for online nonlinear stochastic gradient descent under heavy-tailed noise.
\newblock In Yingzhen Li, Stephan Mandt, Shipra Agrawal, and Emtiyaz Khan, editors, {\em Proceedings of The 28th International Conference on Artificial Intelligence and Statistics}, volume 258 of {\em Proceedings of Machine Learning Research}, pages 1774--1782. PMLR, 03--05 May 2025.

\bibitem[BBC{\etalchar{+}}19]{bauschke2019linear}
Heinz~H Bauschke, J{\'e}r{\^o}me Bolte, Jiawei Chen, Marc Teboulle, and Xianfu Wang.
\newblock On linear convergence of non-euclidean gradient methods without strong convexity and lipschitz gradient continuity.
\newblock {\em Journal of Optimization Theory and Applications}, 182(3):1068--1087, 2019.

\bibitem[BBT17]{doi:10.1287/moor.2016.0817}
Heinz~H. Bauschke, J\'{e}r\^{o}me Bolte, and Marc Teboulle.
\newblock A descent lemma beyond lipschitz gradient continuity: First-order methods revisited and applications.
\newblock {\em Mathematics of Operations Research}, 42(2):330--348, 2017.

\bibitem[BCN18]{doi:10.1137/16M1080173}
L\'{e}on Bottou, Frank~E. Curtis, and Jorge Nocedal.
\newblock Optimization methods for large-scale machine learning.
\newblock {\em SIAM Review}, 60(2):223--311, 2018.

\bibitem[BT03]{BECK2003167}
Amir Beck and Marc Teboulle.
\newblock Mirror descent and nonlinear projected subgradient methods for convex optimization.
\newblock {\em Operations Research Letters}, 31(3):167--175, 2003.

\bibitem[BWL24]{pmlr-v238-battash24a}
Barak Battash, Lior Wolf, and Ofir Lindenbaum.
\newblock Revisiting the noise model of stochastic gradient descent.
\newblock In Sanjoy Dasgupta, Stephan Mandt, and Yingzhen Li, editors, {\em Proceedings of The 27th International Conference on Artificial Intelligence and Statistics}, volume 238 of {\em Proceedings of Machine Learning Research}, pages 4780--4788. PMLR, 02--04 May 2024.

\bibitem[CM21]{cutkosky2021high}
Ashok Cutkosky and Harsh Mehta.
\newblock High-probability bounds for non-convex stochastic optimization with heavy tails.
\newblock {\em Advances in Neural Information Processing Systems}, 34:4883--4895, 2021.

\bibitem[DSSST10]{duchi2010composite}
John~C Duchi, Shai Shalev-Shwartz, Yoram Singer, and Ambuj Tewari.
\newblock Composite objective mirror descent.
\newblock In {\em COLT}, volume~10, pages 14--26. Citeseer, 2010.

\bibitem[DTdB22]{dragomir2022optimal}
Radu-Alexandru Dragomir, Adrien~B Taylor, Alexandre d’Aspremont, and J{\'e}r{\^o}me Bolte.
\newblock Optimal complexity and certification of bregman first-order methods.
\newblock {\em Mathematical Programming}, 194(1):41--83, 2022.

\bibitem[FHL25]{fatkhullin2025can}
Ilyas Fatkhullin, Florian H{\"u}bler, and Guanghui Lan.
\newblock Can sgd handle heavy-tailed noise?
\newblock {\em arXiv preprint arXiv:2508.04860}, 2025.

\bibitem[GEGGM21]{pmlr-v130-guille-escuret21a}
Charles Guille-Escuret, Manuela Girotti, Baptiste Goujaud, and Ioannis Mitliagkas.
\newblock A study of condition numbers for first-order optimization.
\newblock In Arindam Banerjee and Kenji Fukumizu, editors, {\em Proceedings of The 24th International Conference on Artificial Intelligence and Statistics}, volume 130 of {\em Proceedings of Machine Learning Research}, pages 1261--1269. PMLR, 13--15 Apr 2021.

\bibitem[GFJ15]{7330562}
Euhanna Ghadimi, Hamid~Reza Feyzmahdavian, and Mikael Johansson.
\newblock Global convergence of the heavy-ball method for convex optimization.
\newblock In {\em 2015 European Control Conference (ECC)}, pages 310--315, 2015.

\bibitem[GL13]{doi:10.1137/120880811}
Saeed Ghadimi and Guanghui Lan.
\newblock Stochastic first- and zeroth-order methods for nonconvex stochastic programming.
\newblock {\em SIAM Journal on Optimization}, 23(4):2341--2368, 2013.

\bibitem[GP18]{gutman2018unified}
David~H Gutman and Javier~F Pena.
\newblock A unified framework for bregman proximal methods: subgradient, gradient, and accelerated gradient schemes.
\newblock {\em arXiv preprint arXiv:1812.10198}, 2018.

\bibitem[GSD{\etalchar{+}}24]{pmlr-v235-gorbunov24a}
Eduard Gorbunov, Abdurakhmon Sadiev, Marina Danilova, Samuel Horv\'{a}th, Gauthier Gidel, Pavel Dvurechensky, Alexander Gasnikov, and Peter Richt\'{a}rik.
\newblock High-probability convergence for composite and distributed stochastic minimization and variational inequalities with heavy-tailed noise.
\newblock In Ruslan Salakhutdinov, Zico Kolter, Katherine Heller, Adrian Weller, Nuria Oliver, Jonathan Scarlett, and Felix Berkenkamp, editors, {\em Proceedings of the 41st International Conference on Machine Learning}, volume 235 of {\em Proceedings of Machine Learning Research}, pages 15951--16070. PMLR, 21--27 Jul 2024.

\bibitem[GZP{\etalchar{+}}21]{pmlr-v139-garg21b}
Saurabh Garg, Joshua Zhanson, Emilio Parisotto, Adarsh Prasad, Zico Kolter, Zachary Lipton, Sivaraman Balakrishnan, Ruslan Salakhutdinov, and Pradeep Ravikumar.
\newblock On proximal policy optimization's heavy-tailed gradients.
\newblock In Marina Meila and Tong Zhang, editors, {\em Proceedings of the 38th International Conference on Machine Learning}, volume 139 of {\em Proceedings of Machine Learning Research}, pages 3610--3619. PMLR, 18--24 Jul 2021.

\bibitem[HFH25]{pmlr-v258-hubler25a}
Florian H{\"u}bler, Ilyas Fatkhullin, and Niao He.
\newblock From gradient clipping to normalization for heavy tailed sgd.
\newblock In Yingzhen Li, Stephan Mandt, Shipra Agrawal, and Emtiyaz Khan, editors, {\em Proceedings of The 28th International Conference on Artificial Intelligence and Statistics}, volume 258 of {\em Proceedings of Machine Learning Research}, pages 2413--2421. PMLR, 03--05 May 2025.

\bibitem[KB14]{kingma2014adam}
Diederik~P Kingma and Jimmy Ba.
\newblock Adam: A method for stochastic optimization.
\newblock {\em arXiv preprint arXiv:1412.6980}, 2014.

\bibitem[Kiw97]{doi:10.1137/S0363012995281742}
Krzysztof~C. Kiwiel.
\newblock Proximal minimization methods with generalized bregman functions.
\newblock {\em SIAM Journal on Control and Optimization}, 35(4):1142--1168, 1997.

\bibitem[Lan12]{lan2012optimal}
Guanghui Lan.
\newblock An optimal method for stochastic composite optimization.
\newblock {\em Mathematical Programming}, 133(1):365--397, 2012.

\bibitem[Lan20]{lan2020first}
Guanghui Lan.
\newblock {\em First-order and stochastic optimization methods for machine learning}.
\newblock Springer, 2020.

\bibitem[LFN18]{doi:10.1137/16M1099546}
Haihao Lu, Robert~M. Freund, and Yurii Nesterov.
\newblock Relatively smooth convex optimization by first-order methods, and applications.
\newblock {\em SIAM Journal on Optimization}, 28(1):333--354, 2018.

\bibitem[LGY20]{NEURIPS2020_d3f5d4de}
Yanli Liu, Yuan Gao, and Wotao Yin.
\newblock An improved analysis of stochastic gradient descent with momentum.
\newblock In H.~Larochelle, M.~Ranzato, R.~Hadsell, M.F. Balcan, and H.~Lin, editors, {\em Advances in Neural Information Processing Systems}, volume~33, pages 18261--18271. Curran Associates, Inc., 2020.

\bibitem[Liu25]{liu2025online}
Zijian Liu.
\newblock Online convex optimization with heavy tails: Old algorithms, new regrets, and applications.
\newblock {\em arXiv preprint arXiv:2508.07473}, 2025.

\bibitem[Liu26a]{liu2026can}
Zijian Liu.
\newblock Can adaptive gradient methods converge under heavy-tailed noise? a case study of adagrad.
\newblock {\em arXiv preprint arXiv:2605.18694}, 2026.

\bibitem[Liu26b]{liu2026clipped}
Zijian Liu.
\newblock Clipped gradient methods for nonsmooth convex optimization under heavy-tailed noise: A refined analysis.
\newblock In {\em The Fourteenth International Conference on Learning Representations}, 2026.

\bibitem[Lu19]{doi:10.1287/ijoo.2018.0008}
Haihao Lu.
\newblock “relative continuity” for non-lipschitz nonsmooth convex optimization using stochastic (or deterministic) mirror descent.
\newblock {\em INFORMS Journal on Optimization}, 1(4):288--303, 2019.

\bibitem[LWZ24]{pmlr-v235-liu24bo}
Langqi Liu, Yibo Wang, and Lijun Zhang.
\newblock High-probability bound for non-smooth non-convex stochastic optimization with heavy tails.
\newblock In Ruslan Salakhutdinov, Zico Kolter, Katherine Heller, Adrian Weller, Nuria Oliver, Jonathan Scarlett, and Felix Berkenkamp, editors, {\em Proceedings of the 41st International Conference on Machine Learning}, volume 235 of {\em Proceedings of Machine Learning Research}, pages 32122--32138. PMLR, 21--27 Jul 2024.

\bibitem[LZ23]{liu2023stochastic}
Zijian Liu and Zhengyuan Zhou.
\newblock Stochastic nonsmooth convex optimization with heavy-tailed noises: High-probability bound, in-expectation rate and initial distance adaptation.
\newblock {\em arXiv preprint arXiv:2303.12277}, 2023.

\bibitem[LZ24]{liu2024revisiting}
Zijian Liu and Zhengyuan Zhou.
\newblock Revisiting the last-iterate convergence of stochastic gradient methods.
\newblock In {\em The Twelfth International Conference on Learning Representations}, 2024.

\bibitem[LZ25]{liu2025nonconvex}
Zijian Liu and Zhengyuan Zhou.
\newblock Nonconvex stochastic optimization under heavy-tailed noises: Optimal convergence without gradient clipping.
\newblock In {\em The Thirteenth International Conference on Learning Representations}, 2025.

\bibitem[LZZ23]{pmlr-v195-liu23c}
Zijian Liu, Jiawei Zhang, and Zhengyuan Zhou.
\newblock Breaking the lower bound with (little) structure: Acceleration in non-convex stochastic optimization with heavy-tailed noise.
\newblock In Gergely Neu and Lorenzo Rosasco, editors, {\em Proceedings of Thirty Sixth Conference on Learning Theory}, volume 195 of {\em Proceedings of Machine Learning Research}, pages 2266--2290. PMLR, 12--15 Jul 2023.

\bibitem[Nes84]{nesterov1984minimization}
Yurii~E Nesterov.
\newblock Minimization methods for nonsmooth convex and quasiconvex functions.
\newblock {\em Matekon}, 29(3):519--531, 1984.

\bibitem[NNEN23]{NEURIPS2023_4c454d34}
Ta~Duy Nguyen, Thien~H Nguyen, Alina Ene, and Huy Nguyen.
\newblock Improved convergence in high probability of clipped gradient methods with heavy tailed noise.
\newblock In A.~Oh, T.~Naumann, A.~Globerson, K.~Saenko, M.~Hardt, and S.~Levine, editors, {\em Advances in Neural Information Processing Systems}, volume~36, pages 24191--24222. Curran Associates, Inc., 2023.

\bibitem[NNG19]{necoara2019linear}
Ion Necoara, Yu~Nesterov, and Francois Glineur.
\newblock Linear convergence of first order methods for non-strongly convex optimization.
\newblock {\em Mathematical programming}, 175(1):69--107, 2019.

\bibitem[NY83]{nemirovskij1983problem}
Arkadi Nemirovski and David Yudin.
\newblock Problem complexity and method efficiency in optimization.
\newblock {\em Wiley-Interscience}, 1983.

\bibitem[Ora19]{orabona2019modern}
Francesco Orabona.
\newblock A modern introduction to online learning.
\newblock {\em arXiv preprint arXiv:1912.13213}, 2019.

\bibitem[PGKG24]{pmlr-v238-puchkin24a}
Nikita Puchkin, Eduard Gorbunov, Nickolay Kutuzov, and Alexander Gasnikov.
\newblock Breaking the heavy-tailed noise barrier in stochastic optimization problems.
\newblock In Sanjoy Dasgupta, Stephan Mandt, and Yingzhen Li, editors, {\em Proceedings of The 27th International Conference on Artificial Intelligence and Statistics}, volume 238 of {\em Proceedings of Machine Learning Research}, pages 856--864. PMLR, 02--04 May 2024.

\bibitem[Pin15]{10.15352/afa/06-4-1}
Iosif Pinelis.
\newblock {Best possible bounds of the von Bahr--Esseen type}.
\newblock {\em Annals of Functional Analysis}, 6(4):1 -- 29, 2015.

\bibitem[PMB13]{pmlr-v28-pascanu13}
Razvan Pascanu, Tomas Mikolov, and Yoshua Bengio.
\newblock On the difficulty of training recurrent neural networks.
\newblock In Sanjoy Dasgupta and David McAllester, editors, {\em Proceedings of the 30th International Conference on Machine Learning}, volume~28 of {\em Proceedings of Machine Learning Research}, pages 1310--1318, Atlanta, Georgia, USA, 17--19 Jun 2013. PMLR.

\bibitem[Pol63]{POLYAK1963864}
B.T. Polyak.
\newblock Gradient methods for the minimisation of functionals.
\newblock {\em USSR Computational Mathematics and Mathematical Physics}, 3(4):864--878, 1963.

\bibitem[Pol64]{POLYAK19641}
B.T. Polyak.
\newblock Some methods of speeding up the convergence of iteration methods.
\newblock {\em USSR Computational Mathematics and Mathematical Physics}, 4(5):1--17, 1964.

\bibitem[Pol87]{Solr-KOHA-OAI-TEST:19722}
Boris~T. Polyak.
\newblock {\em Introduction to optimization}.
\newblock New York, Optimization Software, 1987.

\bibitem[PPS24]{parletta2024improved}
Daniela~Angela Parletta, Andrea Paudice, and Saverio Salzo.
\newblock An improved analysis of the clipped stochastic subgradient method under heavy-tailed noise.
\newblock {\em arXiv preprint arXiv:2410.00573}, 2024.

\bibitem[RM51]{10.1214/aoms/1177729586}
Herbert Robbins and Sutton Monro.
\newblock {A Stochastic Approximation Method}.
\newblock {\em The Annals of Mathematical Statistics}, 22(3):400 -- 407, 1951.

\bibitem[SDG{\etalchar{+}}23]{pmlr-v202-sadiev23a}
Abdurakhmon Sadiev, Marina Danilova, Eduard Gorbunov, Samuel Horv\'{a}th, Gauthier Gidel, Pavel Dvurechensky, Alexander Gasnikov, and Peter Richt\'{a}rik.
\newblock High-probability bounds for stochastic optimization and variational inequalities: the case of unbounded variance.
\newblock In Andreas Krause, Emma Brunskill, Kyunghyun Cho, Barbara Engelhardt, Sivan Sabato, and Jonathan Scarlett, editors, {\em Proceedings of the 40th International Conference on Machine Learning}, volume 202 of {\em Proceedings of Machine Learning Research}, pages 29563--29648. PMLR, 23--29 Jul 2023.

\bibitem[SLY25]{JMLR:v26:24-1991}
Tao Sun, Xinwang Liu, and Kun Yuan.
\newblock Revisiting gradient normalization and clipping for nonconvex sgd under heavy-tailed noise: Necessity, sufficiency, and acceleration.
\newblock {\em Journal of Machine Learning Research}, 26(237):1--42, 2025.

\bibitem[SSG19]{pmlr-v97-simsekli19a}
Umut Simsekli, Levent Sagun, and Mert Gurbuzbalaban.
\newblock A tail-index analysis of stochastic gradient noise in deep neural networks.
\newblock In Kamalika Chaudhuri and Ruslan Salakhutdinov, editors, {\em Proceedings of the 36th International Conference on Machine Learning}, volume~97 of {\em Proceedings of Machine Learning Research}, pages 5827--5837. PMLR, 09--15 Jun 2019.

\bibitem[vBE65]{8628f855-d581-37e5-a68f-5014db61e4b7}
Bengt von Bahr and Carl-Gustav Esseen.
\newblock Inequalities for the rth absolute moment of a sum of random variables, $1\leq r\leq 2$.
\newblock {\em The Annals of Mathematical Statistics}, 36(1):299--303, 1965.

\bibitem[VYB{\etalchar{+}}22]{pmlr-v178-vural22a}
Nuri~Mert Vural, Lu~Yu, Krishna Balasubramanian, Stanislav Volgushev, and Murat~A Erdogdu.
\newblock Mirror descent strikes again: Optimal stochastic convex optimization under infinite noise variance.
\newblock In Po-Ling Loh and Maxim Raginsky, editors, {\em Proceedings of Thirty Fifth Conference on Learning Theory}, volume 178 of {\em Proceedings of Machine Learning Research}, pages 65--102. PMLR, 02--05 Jul 2022.

\bibitem[WFZ{\etalchar{+}}23]{NEURIPS2023_7ac19fdc}
Bohan Wang, Jingwen Fu, Huishuai Zhang, Nanning Zheng, and Wei Chen.
\newblock Closing the gap between the upper bound and lower bound of adam\textquotesingle s iteration complexity.
\newblock In A.~Oh, T.~Naumann, A.~Globerson, K.~Saenko, M.~Hardt, and S.~Levine, editors, {\em Advances in Neural Information Processing Systems}, volume~36, pages 39006--39032. Curran Associates, Inc., 2023.

\bibitem[WGZ{\etalchar{+}}21]{NEURIPS2021_9cdf2656}
Hongjian Wang, Mert Gurbuzbalaban, Lingjiong Zhu, Umut Simsekli, and Murat~A Erdogdu.
\newblock Convergence rates of stochastic gradient descent under infinite noise variance.
\newblock In M.~Ranzato, A.~Beygelzimer, Y.~Dauphin, P.S. Liang, and J.~Wortman Vaughan, editors, {\em Advances in Neural Information Processing Systems}, volume~34, pages 18866--18877. Curran Associates, Inc., 2021.

\bibitem[YFL23]{pmlr-v195-yue23a}
Pengyun Yue, Cong Fang, and Zhouchen Lin.
\newblock On the lower bound of minimizing polyak-Łojasiewicz functions.
\newblock In Gergely Neu and Lorenzo Rosasco, editors, {\em Proceedings of Thirty Sixth Conference on Learning Theory}, volume 195 of {\em Proceedings of Machine Learning Research}, pages 2948--2968. PMLR, 12--15 Jul 2023.

\bibitem[ZC22]{NEURIPS2022_349956de}
Jiujia Zhang and Ashok Cutkosky.
\newblock Parameter-free regret in high probability with heavy tails.
\newblock In S.~Koyejo, S.~Mohamed, A.~Agarwal, D.~Belgrave, K.~Cho, and A.~Oh, editors, {\em Advances in Neural Information Processing Systems}, volume~35, pages 8000--8012. Curran Associates, Inc., 2022.

\bibitem[ZDGP21]{doi:10.1287/moor.2019.1047}
Hui Zhang, Yu-Hong Dai, Lei Guo, and Wei Peng.
\newblock Proximal-like incremental aggregated gradient method with linear convergence under bregman distance growth conditions.
\newblock {\em Mathematics of Operations Research}, 46(1):61--81, 2021.

\bibitem[ZG25]{doi:10.1137/24M1717762}
Moslem Zamani and Fran\c{c}ois Glineur.
\newblock Exact convergence rate of the last iterate in subgradient methods.
\newblock {\em SIAM Journal on Optimization}, 35(3):2182--2201, 2025.

\bibitem[ZKV{\etalchar{+}}20]{NEURIPS2020_b05b57f6}
Jingzhao Zhang, Sai~Praneeth Karimireddy, Andreas Veit, Seungyeon Kim, Sashank Reddi, Sanjiv Kumar, and Suvrit Sra.
\newblock Why are adaptive methods good for attention models?
\newblock In H.~Larochelle, M.~Ranzato, R.~Hadsell, M.F. Balcan, and H.~Lin, editors, {\em Advances in Neural Information Processing Systems}, volume~33, pages 15383--15393. Curran Associates, Inc., 2020.

\bibitem[ZSPSH20]{NEURIPS2020_b67fb336}
Yihan Zhou, Victor Sanches~Portella, Mark Schmidt, and Nicholas Harvey.
\newblock Regret bounds without lipschitz continuity: Online learning with relative-lipschitz losses.
\newblock In H.~Larochelle, M.~Ranzato, R.~Hadsell, M.F. Balcan, and H.~Lin, editors, {\em Advances in Neural Information Processing Systems}, volume~33, pages 15823--15833. Curran Associates, Inc., 2020.

\bibitem[Zǎ83]{ZALINESCU1983344}
C.~Zǎlinescu.
\newblock On uniformly convex functions.
\newblock {\em Journal of Mathematical Analysis and Applications}, 95(2):344--374, 1983.

\end{thebibliography}

\clearpage

\appendix

\section{Proof of Lemma \ref{lem:core}\label{sec:proof-core-lemma}}

\begin{proof}
Note that $\left\{ X_{t}\defeq\left\langle \bxi_{t},\by_{t}\right\rangle \right\} _{t\in\N}$
forms a martingale difference sequence adapted to $\left\{ \F_{t}\right\} _{t\in\N}$.
In addition, it satisfies
\begin{equation}
\E\left[\left|X_{t}\right|^{\p}\right]\overset{(a)}{=}\E\left[\E_{t-1}\left[\left|X_{t}\right|^{\p}\right]\right]\overset{(b)}{\leq}\E\left[\left\Vert \by_{t}\right\Vert ^{\p}\E_{t-1}\left[\left\Vert \bxi_{t}\right\Vert _{\star}^{\p}\right]\right]\overset{(c)}{\leq}\E\left[\sigma^{\p}\left\Vert \by_{t}\right\Vert ^{\p}\right]=\sigma^{\p}\E\left[\left\Vert \by_{t}\right\Vert ^{\p}\right],\label{eq:core-1}
\end{equation}
where $(a)$ is due to the tower rule, $(b)$ is by $\left|\left\langle \bxi_{t},\by_{t}\right\rangle \right|\leq\left\Vert \bxi_{t}\right\Vert _{\star}\left\Vert \by_{t}\right\Vert $
and $\by_{t}\in\F_{t-1}$, and $(c)$ follows from Assumption \ref{assu:oracle}.
Therefore, given $t\in\N$,
\begin{align*}
\E\left[\left|\sum_{s=1}^{t}\left\langle \bxi_{s},\by_{s}\right\rangle \right|^{\frac{\p}{2}}\right] & =\E\left[\left|\sum_{s=1}^{t}X_{s}\right|^{\frac{\p}{2}}\right]\overset{(d)}{\leq}\sqrt{\E\left[\left|\sum_{s=1}^{t}X_{s}\right|^{\p}\right]}\\
 & \overset{(e)}{\leq}\sqrt{2^{2-\p}\sum_{s=1}^{t}\E\left[\left|X_{s}\right|^{\p}\right]}\overset{(\ref{eq:core-1})}{\leq}\sqrt{2^{2-\p}\sigma^{\p}\sum_{s=1}^{t}\E\left[\left\Vert \by_{s}\right\Vert ^{\p}\right]},
\end{align*}
where $(d)$ is by H\"{o}lder's inequality and $(e)$ is due to the
von Bahr-Esseen inequality \citep{8628f855-d581-37e5-a68f-5014db61e4b7}
for martingales (see, e.g., \citep{10.15352/afa/06-4-1}).
\end{proof}

\section{Convex Optimization\label{sec:proof-cvx}}

\paragraph{Notation.}

Given $\bx\in\dom F$, we let $\left[\bx\right]_{\star}\in\X_{\star}$
denote the point satisfying $\Breg_{\psi}(\left[\bx\right]_{\star},\bx)=\Breg_{\psi}(\X_{\star},\bx)$,
where we recall that $\Breg_{\psi}(\X_{\star},\bx)=\inf_{\bx_{\star}\in\X_{\star}}\Breg_{\psi}(\bx_{\star},\bx)$.
The existence of $\left[\bx\right]_{\star}$ is guaranteed by the
conditions on $\psi$ and $F$. Moreover, for $\SMD$ (resp. $\ASMD$),
we denote by $\Delta_{t}\defeq F(\bx_{t})-F_{\star}$ ($\Delta_{t}\defeq F(\bz_{t})-F_{\star}$)
for any $t\in\left[T+1\right]$ in the proof.

\subsection{Full Theorems for $\protect\SMD$ and Proofs}

\paragraph{The case $\mu=0$.}

We first consider general convex optimization.
\begin{thm}[Full statement of Theorem \ref{thm:main-SMD-cvx-exp}]
\label{thm:SMD-cvx-exp}Under Assumptions \ref{assu:oracle} (with
$h=f$) and \ref{assu:SMD} (with $\mu=0$), for any $T\in\N$ and
$\left\{ \eta_{t}\leq\frac{1}{2L}\right\} _{t=1}^{T}$, $\SMD$ (Algorithm
\ref{alg:SMD}) guarantees that

\begin{itemize}[leftmargin=*]

\item $\E\left[\left(\frac{\sum_{t=1}^{T}\eta_{t}\Delta_{t+1}}{\sum_{t=1}^{T}\eta_{t}}\right)^{\frac{\p}{2}}\right]\lesssim\left(\frac{\Breg_{\psi}(\X_{\star},\bx_{1})}{\sum_{t=1}^{T}\eta_{t}}\right)^{\frac{\p}{2}}+\left(\frac{\sum_{t=1}^{T}M^{2}\eta_{t}^{2}}{\sum_{t=1}^{T}\eta_{t}}\right)^{\frac{\p}{2}}+\sum_{t=1}^{T}\left(\frac{\sigma^{2}\eta_{t}^{2}}{\sum_{t=1}^{T}\eta_{t}}\right)^{\frac{\p}{2}}.$

\item $\E\left[\left(\Delta_{T+1}\right)^{\frac{\p}{2}}\right]\lesssim\left(\frac{\Breg_{\psi}(\X_{\star},\bx_{1})}{\sum_{t=1}^{T}\eta_{t}}\right)^{\frac{\p}{2}}+\left(\sum_{t=1}^{T}\frac{M^{2}\eta_{t}^{2}}{\sum_{s=t}^{T}\eta_{s}}\right)^{\frac{\p}{2}}+\sum_{t=1}^{T}\left(\frac{\sigma^{2}\eta_{t}^{2}}{\sum_{s=t}^{T}\eta_{s}}\right)^{\frac{\p}{2}}.$

\end{itemize}In particular, by plugging in different choices of $\left\{ \eta_{t}\right\} _{t=1}^{T}$,
we recover Theorem \ref{thm:main-SMD-cvx-exp}.
\end{thm}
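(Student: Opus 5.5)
The plan for both bullets is the paper's three-step principle (\ref{eq:main-idea}): sum a pathwise descent inequality, raise it to the power $\frac{\p}{2}$, take expectations, and handle the martingale term with Lemma \ref{lem:core}. The one extra ingredient is a self-bounding argument that controls the $\p$-th moments of the distances to the optimum, so that no bounded domain is needed.

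\emph{Step 1 (one-step inequality).} Fix any $\bx\in\dom F$. The three-point property of the composite prox step in Algorithm \ref{alg:SMD} gives
\[
\eta_{t}\bigl(r(\bx_{t+1})-r(\bx)+\langle\bg_{t},\bx_{t+1}-\bx\rangle\bigr)\leq\Breg_{\psi}(\bx,\bx_{t})-\Breg_{\psi}(\bx,\bx_{t+1})-\Breg_{\psi}(\bx_{t+1},\bx_{t}).
\]
I then combine three facts:
\begin{itemize}
\item $(L,M)$-relative smoothness, $f(\bx_{t+1})\leq f(\bx_{t})+\langle\nabla f(\bx_{t}),\bx_{t+1}-\bx_{t}\rangle+L\Breg_{\psi}(\bx_{t+1},\bx_{t})+M\sqrt{2\Breg_{\psi}(\bx_{t+1},\bx_{t})}$;
\item convexity, $f(\bx_{t})+\langle\nabla f(\bx_{t}),\bx-\bx_{t}\rangle\leq f(\bx)$;
\item the split $\langle\bxi_{t},\bx-\bx_{t+1}\rangle=\langle\bxi_{t},\bx-\bx_{t}\rangle+\langle\bxi_{t},\bx_{t}-\bx_{t+1}\rangle$, bounding the second part by $\left\Vert \bxi_{t}\right\Vert _{\star}\left\Vert \bx_{t}-\bx_{t+1}\right\Vert$ together with $\left\Vert \bx_{t}-\bx_{t+1}\right\Vert ^{2}\leq2\Breg_{\psi}(\bx_{t+1},\bx_{t})$.
\end{itemize}
With $\eta_{t}\leq\frac{1}{2L}$, Young's inequality absorbs the remaining $\Breg_{\psi}(\bx_{t+1},\bx_{t})$ terms. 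The result is
\[
\eta_{t}(F(\bx_{t+1})-F(\bx))+\Breg_{\psi}(\bx,\bx_{t+1})\leq\Breg_{\psi}(\bx,\bx_{t})+4\eta_{t}^{2}(M^{2}+\left\Vert \bxi_{t}\right\Vert _{\star}^{2})+\eta_{t}\langle\bxi_{t},\bx-\bx_{t}\rangle .
\]

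\emph{Step 2 (average iterate).} Take $\bx=\bx_{\star}\defeq[\bx_{1}]_{\star}$, which is deterministic. Summing from $1$ to $t$ and then using $(a+b)^{\frac{\p}{2}}\leq a^{\frac{\p}{2}}+b^{\frac{\p}{2}}$ repeatedly gives, for every $t\le T$,
\[
\Bigl(\sum_{s\le t}\eta_{s}\Delta_{s+1}\Bigr)^{\frac{\p}{2}}+\Breg_{\psi}(\bx_{\star},\bx_{t+1})^{\frac{\p}{2}}\lesssim\Breg^{\frac{\p}{2}}+\Bigl(M^{2}\sum_{s\le T}\eta_{s}^{2}\Bigr)^{\frac{\p}{2}}+\sum_{s\le t}\eta_{s}^{\p}\left\Vert \bxi_{s}\right\Vert _{\star}^{\p}+\Bigl|\sum_{s\le t}\eta_{s}\langle\bxi_{s},\bx_{\star}-\bx_{s}\rangle\Bigr|^{\frac{\p}{2}}.
\]
Taking expectations and applying Lemma \ref{lem:core} with $\by_{s}=\eta_{s}(\bx_{\star}-\bx_{s})\in\F_{s-1}$ bounds the last term by $\sigma^{\frac{\p}{2}}\sqrt{\sum_{s\le t}\eta_{s}^{\p}u_{s}}$, where $u_{s}\defeq\E[\left\Vert \bx_{\star}-\bx_{s}\right\Vert ^{\p}]\leq\E[(2\Breg_{\psi}(\bx_{\star},\bx_{s}))^{\frac{\p}{2}}]$.

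This is the step I expect to be the main obstacle, because the domain is unbounded and nothing a priori controls $u_{s}$. I would exploit the fact that the same inequality also controls $\Breg_{\psi}(\bx_{\star},\bx_{t+1})^{\frac{\p}{2}}$. Set $a\defeq\Breg^{\frac{\p}{2}}+(M^{2}\sum\eta_{s}^{2})^{\frac{\p}{2}}+\sigma^{\p}\sum\eta_{s}^{\p}$ and $U_{t}\defeq\max_{s\le t}u_{s}$. Then
\[
U_{t+1}\leq C\Bigl(a+\sigma^{\frac{\p}{2}}\sqrt{U_{t}\sum_{s}\eta_{s}^{\p}}\Bigr)\leq C a+\tfrac{1}{2}U_{t+1}+\tfrac{C^{2}}{2}\sigma^{\p}\sum_{s}\eta_{s}^{\p},
\]
which yields $U_{T}\lesssim a$ by induction, since $u_{1}=\left\Vert \bx_{\star}-\bx_{1}\right\Vert ^{\p}\lesssim\Breg^{\frac{\p}{2}}$. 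Substituting back gives $\E[(\sum_{t}\eta_{t}\Delta_{t+1})^{\frac{\p}{2}}]\lesssim a$. Dividing by $(\sum\eta_{t})^{\frac{\p}{2}}$ then produces exactly the three terms of the first bullet, because $\sigma^{\p}\sum\eta_{t}^{\p}/(\sum\eta_{t})^{\frac{\p}{2}}=\sum_{t}(\sigma^{2}\eta_{t}^{2}/\sum\eta_{t})^{\frac{\p}{2}}$.

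\emph{Step 3 (last iterate).} I would follow the Zamani--Glineur / Liu--Zhou reference-point technique. In Step 1 take $\bx=\bz_{t}$, a convex combination of $\bx_{\star}$ and $\bx_{1},\dots,\bx_{t}$ whose weights are built from $\eta_{s}$ and $\sum_{s'\ge s}\eta_{s'}$, so that $\bz_{t}\in\F_{t-1}$. Convexity of $F$ and the convexity of $\Breg_{\psi}(\cdot,\by)$ in its first argument let the summed inequality telescope into a bound on $\Delta_{T+1}$. The deterministic and noise-square terms pick up the weights $\eta_{t}^{2}/\sum_{s\ge t}\eta_{s}$, which matches the second bullet after the power $\frac{\p}{2}$ and expectation. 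The martingale term is again handled by Lemma \ref{lem:core} with $\by_{t}=\eta_{t}(\bz_{t}-\bx_{t})$. Its $\p$-th moments are bounded through $\left\Vert \bz_{t}-\bx_{t}\right\Vert \le\sum(\text{weights})(\left\Vert \bx_{\star}-\bx_{s}\right\Vert +\left\Vert \bx_{\star}-\bx_{t}\right\Vert )$ together with the uniform bound $U_{T}\lesssim a$ from Step 2, followed by a final AM-GM step. The technical care here is in checking that these weighted moment sums reduce to $\sigma^{\p}\sum_{t}(\eta_{t}^{2}/\sum_{s\ge t}\eta_{s})^{\frac{\p}{2}}$ rather than to something cruder.
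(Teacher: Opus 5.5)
Your Steps 1 and 2 are sound and follow the paper's argument. The paper anchors each step at $[\bx_t]_\star$ so the same lemma also covers $\mu>0$; anchoring at the fixed $[\bx_1]_\star$ is fine when $\mu=0$. Your self-bounding via the maximal Bregman moment plus induction is what the paper does. The gap is in Step 3. Telescoping with the reference points $\bz_t$ forces you to weight step $t$ by $v_t=\eta_T/\sum_{s\ge t}\eta_s$, which is what your construction amounts to. The martingale term is then $\sum_t v_t\eta_t\langle\bxi_t,\bz_t-\bx_t\rangle$, and Lemma \ref{lem:core} charges $\sigma^{\p/2}\sqrt{\sum_t v_t^{\p}\eta_t^{\p}\E[\|\bz_t-\bx_t\|^{\p}]}$. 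If you bound $\E[\|\bz_t-\bx_t\|^{\p}]$ by going through $\bx_\star$ and using $U_T\lesssim a$, you get $\sigma^{\p/2}\sqrt{a\sum_t v_t^{\p}\eta_t^{\p}}$. This is too large, because $a\ge\Breg^{\p/2}$ carries no $v$-weight while $v_t$ rises to $v_T=1\gg v_0$. Take constant $\eta_t\equiv\eta$: then $v_t=1/(T-t+1)$, $\sum_t v_t^{\p}=O(1)$, and after dividing by $\eta_T^{\p/2}$ you are left with a term $\gtrsim\sigma^{\p/2}\Breg^{\p/4}$ that does not decay in $T$. By contrast, the right-hand side of the second bullet tends to $0$ for a suitably tuned constant stepsize. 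A final AM-GM cannot fix this. Absorbing $\sigma^{\p/2}\Breg^{\p/4}\sqrt{\sum_t v_t^{\p}\eta_t^{\p}}$ into $(v_0\Breg)^{\p/2}+\sigma^{\p}\sum_t v_t^{\p/2}\eta_t^{\p}$ would need $\sum_t v_t^{\p}\eta_t^{\p}\lesssim v_0^{\p/2}\sum_t v_t^{\p/2}\eta_t^{\p}$, which fails by a factor of order $T^{\p-1}$ for constant steps. The triangle inequality through $\bx_\star$ throws away the fact that $\bz_t$ is close to $\bx_t$ near the end of the horizon.

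The missing idea is to self-bound on the reference-point distance itself, not on distances to $\bx_\star$.
\begin{itemize}
\item Keep $v_t\Breg_{\psi}(\bz_t,\bx_{t+1})$ on the left of the telescoped inequality, as in the second inequality of Lemma \ref{lem:SMD-core}.
\item Since $v_t(\bz_t-\bx_t)=v_{t-1}(\bz_{t-1}-\bx_t)$, the martingale increment is $v_{t-1}\eta_t\langle\bxi_t,\bz_{t-1}-\bx_t\rangle$.
\item Strong convexity gives $\|v_{t-1}\eta_t(\bz_{t-1}-\bx_t)\|^{\p}\le2^{\p/2}v_{t-1}^{\p/2}\eta_t^{\p}\bigl(v_{t-1}\Breg_{\psi}(\bz_{t-1},\bx_t)\bigr)^{\p/2}$. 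The last factor is the left-hand side of the same inequality at step $t-1$; at $t=1$ it equals $v_0\Breg$, since $\bz_0=[\bx_1]_\star$.
\item The AM-GM plus induction from your Step 2, together with $v_{t-1}\le v_t$, gives $\E[(\eta_T\Delta_{T+1})^{\p/2}]\lesssim(v_0\Breg)^{\p/2}+M^{\p}(\sum_t v_t\eta_t^2)^{\p/2}+\sigma^{\p}\sum_t v_t^{\p/2}\eta_t^{\p}$.
\item Dividing by $\eta_T^{\p/2}$ yields exactly the second bullet.
\end{itemize}
The variance proxy now scales like $v_{t-1}^{\p/2}\eta_t^{\p}$, matching the noise-square terms, rather than $v_t^{\p}\eta_t^{\p}$ times an unweighted distance bound.
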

\begin{proof}
In the following, we write $\Breg_{t}\defeq\Breg_{\psi}(\X_{\star},\bx_{t})=\Breg_{\psi}(\left[\bx_{t}\right]_{\star},\bx_{t}),\forall t\in\left[T+1\right]$.
By $1$-strong convexity of $\psi$ on $\dom F$, we have
\begin{equation}
\frac{\left\Vert \left[\bx_{t}\right]_{\star}-\bx_{t}\right\Vert ^{2}}{2}\leq\Breg_{t},\forall t\in\left[T+1\right].\label{eq:SMD-cvx-exp-mirror}
\end{equation}

\subparagraph{Average-iterate convergence.}

We invoke the first inequality in Lemma \ref{lem:SMD-core} to have
(although $\gamma_{t}=\eta_{t},\forall t\in\left[T\right]$ when $\mu=0$,
we keep $\gamma_{s}$ in the derivation until the last step), for
any $t\in\left[T\right]$,
\begin{align*}
\frac{\gamma_{t+1}}{\eta_{t+1}}\Breg_{t+1}+\sum_{s=1}^{t}\frac{\gamma_{s}\Delta_{s+1}}{2} & \leq\Breg_{1}+\sum_{s=1}^{t}\eta_{s}\gamma_{s}\left(M+\left\Vert \bxi_{s}\right\Vert _{\star}\right)^{2}+\sum_{s=1}^{t}\gamma_{s}\left\langle \bxi_{s},\left[\bx_{s}\right]_{\star}-\bx_{s}\right\rangle \\
 & \leq\Breg_{1}+2\sum_{s=1}^{t}\eta_{s}\gamma_{s}\left(M^{2}+\left\Vert \bxi_{s}\right\Vert _{\star}^{2}\right)+\left|\sum_{s=1}^{t}\gamma_{s}\left\langle \bxi_{s},\left[\bx_{s}\right]_{\star}-\bx_{s}\right\rangle \right|,
\end{align*}
which implies that
\begin{align}
 & \left(\frac{\gamma_{t+1}}{\eta_{t+1}}\Breg_{t+1}+\sum_{s=1}^{t}\frac{\gamma_{s}\Delta_{s+1}}{2}\right)^{\frac{\p}{2}}\leq\left(\Breg_{1}+2\sum_{s=1}^{t}\eta_{s}\gamma_{s}\left(M^{2}+\left\Vert \bxi_{s}\right\Vert _{\star}^{2}\right)+\left|\sum_{s=1}^{t}\gamma_{s}\left\langle \bxi_{s},\left[\bx_{s}\right]_{\star}-\bx_{s}\right\rangle \right|\right)^{\frac{\p}{2}}\nonumber \\
\leq & \Breg_{1}^{\frac{\p}{2}}+2^{\frac{\p}{2}}M^{\p}\left(\sum_{s=1}^{t}\eta_{s}\gamma_{s}\right)^{\frac{\p}{2}}+2^{\frac{\p}{2}}\sum_{s=1}^{t}\eta_{s}^{\frac{\p}{2}}\gamma_{s}^{\frac{\p}{2}}\left\Vert \bxi_{s}\right\Vert _{\star}^{\p}+\left|\sum_{s=1}^{t}\gamma_{s}\left\langle \bxi_{s},\left[\bx_{s}\right]_{\star}-\bx_{s}\right\rangle \right|^{\frac{\p}{2}},\label{eq:SMD-cvx-exp-1}
\end{align}
where the second step is by repeatedly using $(a+b)^{\frac{\p}{2}}\leq a^{\frac{\p}{2}}+b^{\frac{\p}{2}},\forall a,b\geq0$
when $\p\in\left[0,2\right]$.

Next, we take expectations on both sides of (\ref{eq:SMD-cvx-exp-1})
to obtain
\begin{align}
 & \E\left[\left(\frac{\gamma_{t+1}}{\eta_{t+1}}\Breg_{t+1}+\sum_{s=1}^{t}\frac{\gamma_{s}\Delta_{s+1}}{2}\right)^{\frac{\p}{2}}\right]\nonumber \\
\leq & \Breg_{1}^{\frac{\p}{2}}+2^{\frac{\p}{2}}M^{\p}\left(\sum_{s=1}^{t}\eta_{s}\gamma_{s}\right)^{\frac{\p}{2}}+2^{\frac{\p}{2}}\sigma^{\p}\sum_{s=1}^{t}\eta_{s}^{\frac{\p}{2}}\gamma_{s}^{\frac{\p}{2}}+\E\left[\left|\sum_{s=1}^{t}\gamma_{s}\left\langle \bxi_{s},\left[\bx_{s}\right]_{\star}-\bx_{s}\right\rangle \right|^{\frac{\p}{2}}\right]\nonumber \\
\overset{(a)}{\leq} & \Breg_{1}^{\frac{\p}{2}}+2^{\frac{\p}{2}}M^{\p}\left(\sum_{s=1}^{t}\eta_{s}\gamma_{s}\right)^{\frac{\p}{2}}+2^{\frac{\p}{2}}\sigma^{\p}\sum_{s=1}^{t}\eta_{s}^{\frac{\p}{2}}\gamma_{s}^{\frac{\p}{2}}+2^{1-\frac{\p}{4}}\sigma^{\frac{\p}{2}}\sqrt{\sum_{s=1}^{t}\E\left[\gamma_{s}^{\p}\Breg_{s}^{\frac{\p}{2}}\right]}\nonumber \\
\overset{(b)}{\leq} & \Breg_{1}^{\frac{\p}{2}}+2^{\frac{\p}{2}}M^{\p}\left(\sum_{s=1}^{t}\eta_{s}\gamma_{s}\right)^{\frac{\p}{2}}+\left(2^{\frac{\p}{2}}+2^{1-\frac{\p}{2}}\right)\sigma^{\p}\sum_{s=1}^{t}\eta_{s}^{\frac{\p}{2}}\gamma_{s}^{\frac{\p}{2}}+\frac{\max_{s\in\left[t\right]}\E\left[\left(\frac{\gamma_{s}}{\eta_{s}}\Breg_{s}\right)^{\frac{\p}{2}}\right]}{2},\label{eq:SMD-cvx-exp-2}
\end{align}
where $(a)$ is by applying Lemma \ref{lem:core} with $\by_{t}=\gamma_{t}\left(\left[\bx_{t}\right]_{\star}-\bx_{t}\right)$
to have
\[
\E\left[\left|\sum_{s=1}^{t}\gamma_{s}\left\langle \bxi_{s},\left[\bx_{s}\right]_{\star}-\bx_{s}\right\rangle \right|^{\frac{\p}{2}}\right]\leq2^{1-\frac{\p}{2}}\sigma^{\frac{\p}{2}}\sqrt{\sum_{s=1}^{t}\E\left[\gamma_{s}^{\p}\left\Vert \left[\bx_{s}\right]_{\star}-\bx_{s}\right\Vert ^{\p}\right]}\overset{(\ref{eq:SMD-cvx-exp-mirror})}{\leq}2^{1-\frac{\p}{4}}\sigma^{\frac{\p}{2}}\sqrt{\sum_{s=1}^{t}\E\left[\gamma_{s}^{\p}\Breg_{s}^{\frac{\p}{2}}\right]},
\]
and $(b)$ follows from
\[
\sqrt{\sum_{s=1}^{t}\E\left[\gamma_{s}^{\p}\Breg_{s}^{\frac{\p}{2}}\right]}\leq\sqrt{\sum_{s=1}^{t}\eta_{s}^{\frac{\p}{2}}\gamma_{s}^{\frac{\p}{2}}\max_{s\in\left[t\right]}\E\left[\left(\frac{\gamma_{s}}{\eta_{s}}\Breg_{s}\right)^{\frac{\p}{2}}\right]}\leq\frac{\sigma^{\frac{\p}{2}}\sum_{s=1}^{t}\eta_{s}^{\frac{\p}{2}}\gamma_{s}^{\frac{\p}{2}}}{2^{\frac{\p}{4}}}+\frac{\max_{s\in\left[t\right]}\E\left[\left(\frac{\gamma_{s}}{\eta_{s}}\Breg_{s}\right)^{\frac{\p}{2}}\right]}{2^{2-\frac{\p}{4}}\sigma^{\frac{\p}{2}}},
\]
in which the last step is due to AM-GM inequality.

By an inductive argument, (\ref{eq:SMD-cvx-exp-2}) implies that
\[
\E\left[\left(\frac{\gamma_{t+1}}{\eta_{t+1}}\Breg_{t+1}\right)^{\frac{\p}{2}}\right]\lesssim\Breg_{1}^{\frac{\p}{2}}+M^{\p}\left(\sum_{s=1}^{t}\eta_{s}\gamma_{s}\right)^{\frac{\p}{2}}+\sigma^{\p}\sum_{s=1}^{t}\eta_{s}^{\frac{\p}{2}}\gamma_{s}^{\frac{\p}{2}},\forall t\in\left\{ 0\right\} \cup\left[T\right].
\]
Therefore, we invoke (\ref{eq:SMD-cvx-exp-2}) for $t=T$ to obtain
\begin{equation}
\E\left[\left(\frac{\gamma_{T+1}}{\eta_{T+1}}\Breg_{T+1}+\sum_{t=1}^{T}\frac{\gamma_{t}\Delta_{t+1}}{2}\right)^{\frac{\p}{2}}\right]\lesssim\Breg_{1}^{\frac{\p}{2}}+M^{\p}\left(\sum_{t=1}^{T}\eta_{t}\gamma_{t}\right)^{\frac{\p}{2}}+\sigma^{\p}\sum_{t=1}^{T}\eta_{t}^{\frac{\p}{2}}\gamma_{t}^{\frac{\p}{2}},\label{eq:SMD-cvx-core-3}
\end{equation}
Finally, we drop the nonnegative term $\frac{\gamma_{T+1}}{\eta_{T+1}}\Breg_{T+1}$,
then divide both sides by $\left(\frac{\sum_{t=1}^{T}\gamma}{2}\right)^{\frac{\p}{2}}$,
and use $\gamma_{t}=\eta_{t},\forall t\in\left[T\right]$ in this
case to conclude.

\subparagraph{Last-iterate convergence.}

We invoke the second inequality in Lemma \ref{lem:SMD-core} and then
follow essentially the same proof of (\ref{eq:SMD-cvx-core-3}) to
obtain
\[
\E\left[\left(\eta_{T}\Delta_{T+1}\right)^{\frac{\p}{2}}\right]\lesssim\left(v_{0}\Breg_{1}\right)^{\frac{\p}{2}}+M^{\p}\left(\sum_{t=1}^{T}v_{t}\eta_{t}^{2}\right)^{\frac{\p}{2}}+\sigma^{\p}\sum_{t=1}^{T}v_{t}^{\frac{\p}{2}}\eta_{t}^{\p},
\]
where $v_{t}=\frac{\eta_{T}}{\sum_{s=t}^{T}\eta_{s}},\forall t\in\left[T\right]$
and $v_{0}=v_{1}$. Finally, we divide both sides by $\eta_{T}^{\frac{\p}{2}}$
to conclude.
\end{proof}

\paragraph{The case $\mu>0$.}

We next consider a positive parameter in the Bregman growth condition.
\begin{thm}[Full statement of Theorem \ref{thm:main-SMD-str-exp}]
\label{thm:SMD-str-exp}Under Assumptions \ref{assu:oracle} (with
$h=f$) and \ref{assu:SMD} (with $\mu>0$), for any $T\in\N$ and
$\left\{ \eta_{t}\leq\frac{1}{2L}\right\} _{t=1}^{T}$, $\SMD$ (Algorithm
\ref{alg:SMD}) guarantees that, for $\left\{ P_{t}\defeq\prod_{s=1}^{t}\left(1+\frac{\mu\eta_{s}}{2}\right)\right\} _{t=0}^{T}$
and $T^{\prime}\defeq\left\lceil \frac{T}{2}\right\rceil $,

\begin{itemize}[leftmargin=*]

\item $\E\left[\left(\frac{\sum_{t=1}^{T}\gamma_{t}\Delta_{t+1}}{\sum_{t=1}^{T}\gamma_{t}}\right)^{\frac{\p}{2}}\right]\lesssim\left(\frac{\mu\Breg_{\psi}(\X_{\star},\bx_{1})}{P_{T}-1}\right)^{\frac{\p}{2}}+\left(\frac{\sum_{t=1}^{T}\mu M^{2}\eta_{t}^{2}P_{t-1}}{P_{T}-1}\right)^{\frac{\p}{2}}+\sum_{t=1}^{T}\left(\frac{\mu\sigma^{2}\eta_{t}^{2}P_{t-1}}{P_{T}-1}\right)^{\frac{\p}{2}}$.

\item $\E\left[\left(\Breg_{\psi}(\X_{\star},\bx_{T+1})\right)^{\frac{\p}{2}}\right]\lesssim\left(\frac{\Breg_{\psi}(\X_{\star},\bx_{1})}{\prod_{t=1}^{T}\left(1+\frac{\mu\eta_{t}}{2}\right)}\right)^{\frac{\p}{2}}+\left(\frac{\sum_{t=1}^{T}M^{2}\eta_{t}^{2}}{\prod_{s=t}^{T}\left(1+\frac{\mu\eta_{s}}{2}\right)}\right)^{\frac{\p}{2}}+\sum_{t=1}^{T}\left(\frac{\sigma^{2}\eta_{t}^{2}}{\prod_{s=t}^{T}\left(1+\frac{\mu\eta_{s}}{2}\right)}\right)^{\frac{\p}{2}}$.

\item $\E\left[\left(\Delta_{T+1}\right)^{\frac{\p}{2}}\right]\lesssim\left(\frac{\Breg_{\psi}(\X_{\star},\bx_{1})}{\prod_{t=1}^{T^{\prime}}\left(1+\frac{\mu\eta_{t}}{2}\right)\left(\sum_{t=T^{\prime}+1}^{T}\eta_{t}\right)}\right)^{\frac{\p}{2}}+\left(\frac{\sum_{t=1}^{T^{\prime}}M^{2}\eta_{t}^{2}}{\prod_{s=t}^{T^{\prime}}\left(1+\frac{\mu\eta_{s}}{2}\right)\left(\sum_{t=T^{\prime}+1}^{T}\eta_{t}\right)}\right)^{\frac{\p}{2}}+\left(\sum_{t=T^{\prime}+1}^{T}\frac{M^{2}\eta_{t}^{2}}{\sum_{s=t}^{T}\eta_{s}}\right)^{\frac{\p}{2}}\ +\sum_{t=1}^{T^{\prime}}\left(\frac{\sigma^{2}\eta_{t}^{2}}{\prod_{s=t}^{T^{\prime}}\left(1+\frac{\mu\eta_{s}}{2}\right)\left(\sum_{t=T^{\prime}+1}^{T}\eta_{t}\right)}\right)^{\frac{\p}{2}}+\sum_{t=T^{\prime}+1}^{T}\left(\frac{\sigma^{2}\eta_{t}^{2}}{\sum_{s=t}^{T}\eta_{s}}\right)^{\frac{\p}{2}}$.

\end{itemize}In particular, by plugging in different choices of $\left\{ \eta_{t}\right\} _{t=1}^{T}$,
we recover Theorem \ref{thm:main-SMD-str-exp}.
\end{thm}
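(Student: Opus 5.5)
We follow the same three-step template as in the proof of Theorem \ref{thm:SMD-cvx-exp}: sum the descent inequality, raise to the power $\frac{\p}{2}$, take expectations, and control the martingale term with Lemma \ref{lem:core} plus an inductive self-bounding step. The starting point is the first inequality of Lemma \ref{lem:SMD-core}, now with $\mu>0$. We use weights $\gamma_{t}=\eta_{t}P_{t-1}$, so that $\frac{\gamma_{t+1}}{\eta_{t+1}}=P_{t}$. This gives, for every $t\in\left[T\right]$,
\[
P_{t}\Breg_{t+1}+\sum_{s=1}^{t}\frac{\gamma_{s}\Delta_{s+1}}{2}\leq\Breg_{1}+2\sum_{s=1}^{t}\eta_{s}\gamma_{s}\left(M^{2}+\left\Vert \bxi_{s}\right\Vert _{\star}^{2}\right)+\left|\sum_{s=1}^{t}\gamma_{s}\left\langle \bxi_{s},\left[\bx_{s}\right]_{\star}-\bx_{s}\right\rangle \right|.
\]
Raising to $\frac{\p}{2}$ and taking expectations follows (\ref{eq:SMD-cvx-exp-1})–(\ref{eq:SMD-cvx-exp-2}) verbatim. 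The noise-square term becomes $\sigma^{\p}\sum_{s}(\eta_{s}\gamma_{s})^{\frac{\p}{2}}$. The martingale term is bounded by Lemma \ref{lem:core} with $\by_{s}=\gamma_{s}([\bx_{s}]_{\star}-\bx_{s})$, then by (\ref{eq:SMD-cvx-exp-mirror}) and AM-GM, against $\frac12\max_{s\le t}\E[(P_{s-1}\Breg_{s})^{\frac{\p}{2}}]$. Induction then yields
\[
\E\left[\left(P_{T}\Breg_{T+1}+\sum_{t=1}^{T}\tfrac{\gamma_{t}\Delta_{t+1}}{2}\right)^{\frac{\p}{2}}\right]\lesssim\Breg_{1}^{\frac{\p}{2}}+M^{\p}\Big(\sum_{t}\eta_{t}\gamma_{t}\Big)^{\frac{\p}{2}}+\sigma^{\p}\sum_{t}(\eta_{t}\gamma_{t})^{\frac{\p}{2}}.
\]

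\textbf{First bullet (weighted average).} Drop $P_{T}\Breg_{T+1}$ and divide by $(\sum_{t}\gamma_{t}/2)^{\frac{\p}{2}}$. The product structure gives $\sum_{t=1}^{T}\eta_{t}P_{t-1}=\frac{2}{\mu}(P_{T}-1)$, since $P_{t}-P_{t-1}=\frac{\mu\eta_{t}}{2}P_{t-1}$. Substituting this identity produces exactly the three stated terms, with $\eta_{t}\gamma_{t}=\eta_{t}^{2}P_{t-1}$.

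\textbf{Second bullet (distance).} Instead drop the $\Delta$ sum and divide by $P_{T}^{\frac{\p}{2}}$. Because $\eta_{t}^{2}P_{t-1}/P_{T}=\eta_{t}^{2}/\prod_{s=t}^{T}(1+\frac{\mu\eta_{s}}{2})$, this matches the statement.

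\textbf{Third bullet (last iterate).} This uses a two-stage argument. On $[T']$ the second bullet, applied at horizon $T'$, bounds $\E[\Breg_{T'+1}^{\frac{\p}{2}}]$. For the second stage we run the last-iterate argument of Theorem \ref{thm:SMD-cvx-exp} (second inequality of Lemma \ref{lem:SMD-core}) started at $\bx_{T'+1}$ with stepsizes $\eta_{T'+1},\dots,\eta_{T}$; the $\mu\ge0$ part can simply be discarded there. This gives a bound conditional on $\F_{T'}$:
\[
\E[(\Delta_{T+1})^{\frac{\p}{2}}]\lesssim\E\left[\left(\tfrac{\Breg_{T'+1}}{\sum_{t>T'}\eta_{t}}\right)^{\frac{\p}{2}}\right]+\Big(\sum_{t>T'}\tfrac{M^{2}\eta_{t}^{2}}{\sum_{s\ge t}\eta_{s}}\Big)^{\frac{\p}{2}}+\sum_{t>T'}\Big(\tfrac{\sigma^{2}\eta_{t}^{2}}{\sum_{s\ge t}\eta_{s}}\Big)^{\frac{\p}{2}}.
\]
Plugging in the first-stage bound on $\E[\Breg_{T'+1}^{\frac{\p}{2}}]$ gives the stated five terms.

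The main obstacle is this restart. The second-stage analysis must accept a random initial point and only random $\Breg_{T'+1}$, while still applying the inductive self-bounding step. I would handle it by noting that the inequality chain from Lemma \ref{lem:SMD-core} holds pathwise with $\Breg_{1}$ replaced by $\Breg_{T'+1}$ and with the sums starting at $T'+1$. Lemma \ref{lem:core} applies unchanged to the shifted martingale. Since $(\cdot)^{\frac{\p}{2}}$ is applied before expectations, $\Breg_{T'+1}^{\frac{\p}{2}}$ enters linearly in expectation, and no conditioning subtleties arise.

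The remaining work is routine: checking the induction constant and, for Theorem \ref{thm:main-SMD-str-exp}, evaluating $P_{t}$ for $\eta_{t}=\frac{4}{\mu(t+8\kappa)}$. In that case $P_{t}\asymp\frac{(t+8\kappa)(t+8\kappa+1)}{(8\kappa)(8\kappa+1)}$, which yields the $\bar{\bx}_{T+1}$ weights.
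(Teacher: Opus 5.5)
Your proposal is correct and follows the paper's proof essentially step for step. It uses the same $P_{t-1}$-weighted recursion from the first inequality of Lemma~\ref{lem:SMD-core}, together with the identity $\sum_{t}\gamma_{t}=\frac{2}{\mu}(P_{T}-1)$, for the first two bullets. For the last iterate it uses the same restart at $\bx_{T'+1}$, combined with the second bullet at horizon $T'$. Your explicit pathwise treatment of the random restart point is, if anything, more careful than the paper's direct appeal to Theorem~\ref{thm:SMD-cvx-exp}, which is stated for a deterministic initial point.

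There is one small slip in the routine plug-in. For $\eta_{t}=\frac{4}{\mu(t+8\kappa)}$, the product telescopes exactly to $P_{t}=\frac{(t+8\kappa+1)(t+8\kappa+2)}{(8\kappa+1)(8\kappa+2)}$; your shifted expression is undefined when $\kappa=0$. It is this exact form that gives $\gamma_{t}=\eta_{t}P_{t-1}\propto t+8\kappa+1$, i.e., precisely the weights defining $\bar{\bx}_{T+1}$.
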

\begin{proof}
In the following, we write $\Breg_{t}\defeq\Breg_{\psi}(\X_{\star},\bx_{t})=\Breg_{\psi}(\left[\bx_{t}\right]_{\star},\bx_{t}),\forall t\in\left[T+1\right]$.

\subparagraph{Average-iterate convergence and convergence in distance.}

Note that (\ref{eq:SMD-cvx-core-3}) is still applicable when $\mu>0$.
Therefore, we have
\[
\E\left[\left(\frac{\gamma_{T+1}}{\eta_{T+1}}\Breg_{T+1}+\sum_{t=1}^{T}\frac{\gamma_{t}\Delta_{t+1}}{2}\right)^{\frac{\p}{2}}\right]\lesssim\Breg_{1}^{\frac{\p}{2}}+M^{\p}\left(\sum_{t=1}^{T}\eta_{t}\gamma_{t}\right)^{\frac{\p}{2}}+\sigma^{\p}\sum_{t=1}^{T}\eta_{t}^{\frac{\p}{2}}\gamma_{t}^{\frac{\p}{2}},
\]
where $\gamma_{t}=\eta_{t}\prod_{s=1}^{t-1}\left(1+\frac{\mu\eta_{s}}{2}\right),\forall t\in\left[T+1\right]$.
We observe that
\[
\sum_{t=1}^{T}\gamma_{t}=\sum_{t=1}^{T}\eta_{t}\prod_{s=1}^{t-1}\left(1+\frac{\mu\eta_{s}}{2}\right)=\frac{2}{\mu}\sum_{t=1}^{T}\left[\prod_{s=1}^{t}\left(1+\frac{\mu\eta_{s}}{2}\right)-\prod_{s=1}^{t-1}\left(1+\frac{\mu\eta_{s}}{2}\right)\right]=\frac{2}{\mu}\left(\frac{\gamma_{T+1}}{\eta_{T+1}}-1\right),
\]
which implies that
\[
\E\left[\left(\frac{\gamma_{T+1}}{\eta_{T+1}}\Breg_{T+1}+\frac{\left(\frac{\gamma_{T+1}}{\eta_{T+1}}-1\right)}{\mu}\cdot\frac{\sum_{t=1}^{T}\gamma_{t}\Delta_{t+1}}{\sum_{t=1}^{T}\gamma_{t}}\right)^{\frac{\p}{2}}\right]\lesssim\Breg_{1}^{\frac{\p}{2}}+M^{\p}\left(\sum_{t=1}^{T}\eta_{t}\gamma_{t}\right)^{\frac{\p}{2}}+\sigma^{\p}\sum_{t=1}^{T}\eta_{t}^{\frac{\p}{2}}\gamma_{t}^{\frac{\p}{2}}
\]
Finally, we apply some direct calculations to conclude.

\subparagraph{Last-iterate convergence.}

To prove the last-iterate rate, we denote by $T^{\prime}\defeq\left\lceil \frac{T}{2}\right\rceil $
and observe that $\bx_{T+1}$ can be viewed as the output of Algorithm
\ref{alg:SMD} with the initial point $\bx_{T^{\prime}+1}$ after
$\left\lfloor \frac{T}{2}\right\rfloor $ iterations. Therefore, by
the second inequality in Theorem \ref{thm:SMD-cvx-exp}, we have
\[
\E\left[\left(\Delta_{T+1}\right)^{\frac{\p}{2}}\right]\lesssim\frac{\E\left[\Breg_{T^{\prime}+1}^{\frac{\p}{2}}\right]}{\left(\sum_{t=T^{\prime}+1}^{T}\eta_{t}\right)^{\frac{\p}{2}}}+\left(\sum_{t=T^{\prime}+1}^{T}\frac{M^{2}\eta_{t}^{2}}{\sum_{s=t}^{T}\eta_{s}}\right)^{\frac{\p}{2}}+\sum_{t=T^{\prime}+1}^{T}\frac{\sigma^{\p}\eta_{t}^{\p}}{\left(\sum_{s=t}^{T}\eta_{s}\right)^{\frac{\p}{2}}}.
\]
Finally, we apply the bound for $\E\left[\left(\Breg_{\psi}(\X_{\star},\bx_{T^{\prime}+1})\right)^{\frac{\p}{2}}\right]$
above to conclude.
\end{proof}

\subsection{Full Theorems for $\protect\ASMD$ and Proofs}
\begin{thm}[Full statement of Theorem \ref{thm:main-ASMD-cvx-exp}]
\label{thm:ASMD-cvx-exp}Under Assumptions \ref{assu:oracle} (with
$h=f$) and \ref{assu:ASMD}, for any $T\in\N$, $\left\{ w_{t}\in\left[0,1\right]\right\} _{t=1}^{T}$
satisfying $w_{1}=1$, and $\left\{ \eta_{t}\leq\frac{1}{2w_{t}L}\right\} _{t=1}^{T}$
satisfying $\frac{\eta_{t}}{w_{t}}\geq\frac{(1-w_{t+1})\eta_{t+1}}{w_{t+1}},\forall t\in\left[T-1\right]$,
$\ASMD$ (Algorithm \ref{alg:ASMD}) guarantees that
\[
\E\left[\left(\Delta_{T+1}\right)^{\frac{\p}{2}}\right]\lesssim\left(\frac{w_{T}\Breg_{\psi}(\X_{\star},\bx_{1})}{\eta_{T}}\right)^{\frac{\p}{2}}+\left(\frac{M^{2}w_{T}}{\eta_{T}}\sum_{t=1}^{T}\eta_{t}^{2}\right)^{\frac{\p}{2}}+\sum_{t=1}^{T}\left(\frac{\sigma^{2}w_{T}\eta_{t}^{2}}{\eta_{T}}\right)^{\frac{\p}{2}}.
\]
In particular, by plugging in different choices of $\left\{ \eta_{t}\right\} _{t=1}^{T}$,
we recover Theorem \ref{thm:main-ASMD-cvx-exp}.
\end{thm}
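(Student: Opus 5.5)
We follow the three-step framework of Section~\ref{sec:analysis}, exactly as in the proof of Theorem~\ref{thm:SMD-cvx-exp}: sum a deterministic descent inequality, raise the result to the power $\frac{\p}{2}$, and only then take expectations. The first task is the pathwise one-step inequality for $\ASMD$, obtained along the lines of Lan's analysis. Fix $\bx_{\star}=[\bx_{1}]_{\star}$. Convexity of $F$ and the identity $\bz_{t+1}-\bx_{t}=w_{t}(\by_{t+1}-\by_{t})$ give, via $(L,M)$-smoothness,
\[
F(\bz_{t+1})\leq(1-w_{t})F(\bz_{t})+w_{t}\left[f(\bx_{t})+\left\langle \nabla f(\bx_{t}),\by_{t+1}-\bx_{t}\right\rangle +r(\by_{t+1})\right]+\frac{Lw_{t}^{2}}{2}\left\Vert \by_{t+1}-\by_{t}\right\Vert ^{2}+Mw_{t}\left\Vert \by_{t+1}-\by_{t}\right\Vert .
\]
We then replace $\nabla f(\bx_{t})$ by $\bg_{t}-\bxi_{t}$ and apply the three-point property of the mirror step against $\bx_{\star}$. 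The leftover terms $\langle\bxi_{t},\by_{t+1}-\by_{t}\rangle$ and $M\|\by_{t+1}-\by_{t}\|$ are absorbed by Young's inequality into $-\frac{1}{4\eta_{t}}\|\by_{t+1}-\by_{t}\|^{2}$, which is available because $\eta_{t}\leq\frac{1}{2w_{t}L}$. This yields
\[
\Delta_{t+1}\leq(1-w_{t})\Delta_{t}+\frac{w_{t}}{\eta_{t}}\left(\Breg_{\psi}(\bx_{\star},\by_{t})-\Breg_{\psi}(\bx_{\star},\by_{t+1})\right)+2w_{t}\eta_{t}\left(M^{2}+\left\Vert \bxi_{t}\right\Vert _{\star}^{2}\right)+w_{t}\left\langle \bxi_{t},\bx_{\star}-\by_{t}\right\rangle .
\]
In the last term we use $\by_{t}$ in place of $\by_{t+1}$, with the difference already absorbed above. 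Since $\by_{t}\in\F_{t-1}$, this term is a genuine martingale increment.

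Next we multiply the $t$-th inequality by $\eta_{t}/w_{t}$ and sum over $t$. The condition $\frac{\eta_{t}}{w_{t}}\geq\frac{(1-w_{t+1})\eta_{t+1}}{w_{t+1}}$ makes the $\Delta$ terms telescope, leaving nonnegative remainders that we drop, and $w_{1}=1$ removes $\Delta_{1}$. The Bregman terms telescope exactly, with $\by_{1}=\bx_{1}$. For every $t\in[T]$ this gives
\[
\frac{\eta_{t}}{w_{t}}\Delta_{t+1}+\Breg_{\psi}(\bx_{\star},\by_{t+1})\leq\Breg+2\sum_{s=1}^{t}\eta_{s}^{2}\left(M^{2}+\left\Vert \bxi_{s}\right\Vert _{\star}^{2}\right)+\left|\sum_{s=1}^{t}\eta_{s}\left\langle \bxi_{s},\bx_{\star}-\by_{s}\right\rangle \right|.
\]
We raise this to the power $\frac{\p}{2}$ using subadditivity of $a\mapsto a^{\frac{\p}{2}}$ and take expectations, bounding $\E\|\bxi_{s}\|_{\star}^{\p}\leq\sigma^{\p}$. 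The martingale term is handled by Lemma~\ref{lem:core} with the sequence $\eta_{s}(\bx_{\star}-\by_{s})$, which gives a bound of order $\sigma^{\frac{\p}{2}}\sqrt{\sum_{s}\eta_{s}^{\p}\E[\Breg_{\psi}(\bx_{\star},\by_{s})^{\frac{\p}{2}}]}$ by $1$-strong convexity of $\psi$.

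The main obstacle is controlling $\E[\Breg_{\psi}(\bx_{\star},\by_{s})^{\frac{\p}{2}}]$ without a bounded domain. We copy the device from the proof of Theorem~\ref{thm:SMD-cvx-exp}. By AM--GM, the square root is at most $c\,\sigma^{\p}\sum_{s}\eta_{s}^{\p}+\frac{1}{2}\max_{s\leq t}\E[\Breg_{\psi}(\bx_{\star},\by_{s})^{\frac{\p}{2}}]$. Because the left-hand side of the summed inequality controls $\Breg_{\psi}(\bx_{\star},\by_{t+1})$ for every $t$, an induction on $t$ shows
\[
\max_{s\leq T+1}\E\left[\Breg_{\psi}(\bx_{\star},\by_{s})^{\frac{\p}{2}}\right]\lesssim\Breg^{\frac{\p}{2}}+M^{\p}\Big(\sum_{t}\eta_{t}^{2}\Big)^{\frac{\p}{2}}+\sigma^{\p}\sum_{t}\eta_{t}^{\p}.
\]
Plugging this back in at $t=T$ and dividing by $(\eta_{T}/w_{T})^{\frac{\p}{2}}$ yields the claimed bound. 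The stated rates then follow by inserting the specific $w_{t}$ and $\eta_{t}$ and checking the weight condition for each choice.
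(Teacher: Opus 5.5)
Your proposal is correct and follows the paper's proof essentially step for step. The one-step bound is Lemma~\ref{lem:ASMD-descent}, the weighted telescoping under $\frac{\eta_t}{w_t}\geq\frac{(1-w_{t+1})\eta_{t+1}}{w_{t+1}}$ and $w_1=1$ is Lemma~\ref{lem:ASMD-core}, and the $\frac{\p}{2}$-power, Lemma~\ref{lem:core}, AM--GM and induction steps match the paper's. The only difference is cosmetic: you fix the comparator $[\bx_{1}]_{\star}$ throughout, so the Bregman terms telescope exactly, whereas the paper uses the moving projection $[\by_{t}]_{\star}$ together with $\Breg_{\psi}([\by_{t}]_{\star},\by_{t+1})\geq\Breg_{\psi}(\X_{\star},\by_{t+1})$; without a growth condition both choices work equally well.
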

\begin{proof}
In the following, we write $\Breg_{t}\defeq\Breg_{\psi}(\X_{\star},\by_{t})=\Breg_{\psi}(\left[\by_{t}\right]_{\star},\by_{t}),\forall t\in\left[T+1\right]$.
By $1$-strong convexity of $\psi$ on $\dom F$, we have
\begin{equation}
\frac{\left\Vert \left[\by_{t}\right]_{\star}-\by_{t}\right\Vert ^{2}}{2}\leq\Breg_{t},\forall t\in\left[T+1\right].\label{eq:ASMD-cvx-exp-mirror}
\end{equation}
We invoke Lemma \ref{lem:ASMD-core} to have, for any $t\in\left[T\right]$,
\begin{align*}
\Breg_{t+1}+\frac{\eta_{t}\Delta_{t+1}}{w_{t}} & \leq\Breg_{1}+\sum_{s=1}^{t}\eta_{s}^{2}\left(M+\left\Vert \bxi_{s}\right\Vert _{\star}\right)^{2}+\sum_{s=1}^{t}\eta_{s}\left\langle \bxi_{s},\left[\by_{s}\right]_{\star}-\by_{s}\right\rangle \\
 & \leq\Breg_{1}+2\sum_{s=1}^{t}\eta_{s}^{2}\left(M^{2}+\left\Vert \bxi_{s}\right\Vert _{\star}^{2}\right)+\left|\sum_{s=1}^{t}\eta_{s}\left\langle \bxi_{s},\left[\by_{s}\right]_{\star}-\by_{s}\right\rangle \right|,
\end{align*}
which implies that
\begin{align}
 & \left(\Breg_{t+1}+\frac{\eta_{t}\Delta_{t+1}}{w_{t}}\right)^{\frac{\p}{2}}\leq\left(\Breg_{1}+2\sum_{s=1}^{t}\eta_{s}^{2}\left(M^{2}+\left\Vert \bxi_{s}\right\Vert _{\star}^{2}\right)+\left|\sum_{s=1}^{t}\eta_{s}\left\langle \bxi_{s},\left[\by_{s}\right]_{\star}-\by_{s}\right\rangle \right|\right)^{\frac{\p}{2}}\nonumber \\
\leq & \Breg_{1}^{\frac{\p}{2}}+2^{\frac{\p}{2}}M^{\p}\left(\sum_{s=1}^{t}\eta_{s}^{2}\right)^{\frac{\p}{2}}+2^{\frac{\p}{2}}\sum_{s=1}^{t}\eta_{s}^{\p}\left\Vert \bxi_{s}\right\Vert _{\star}^{\p}+\left|\sum_{s=1}^{t}\eta_{s}\left\langle \bxi_{s},\left[\by_{s}\right]_{\star}-\by_{s}\right\rangle \right|^{\frac{\p}{2}},\label{eq:ASMD-cvx-exp-1}
\end{align}
where the second step is by repeatedly using $(a+b)^{\frac{\p}{2}}\leq a^{\frac{\p}{2}}+b^{\frac{\p}{2}},\forall a,b\geq0$
when $\p\in\left[0,2\right]$.

Next, we take expectations on both sides of (\ref{eq:ASMD-cvx-exp-1})
to obtain
\begin{align*}
\E\left[\left(\Breg_{t+1}+\frac{\eta_{t}\Delta_{t+1}}{w_{t}}\right)^{\frac{\p}{2}}\right] & \leq\Breg_{1}^{\frac{\p}{2}}+2^{\frac{\p}{2}}M^{\p}\left(\sum_{s=1}^{t}\eta_{s}^{2}\right)^{\frac{\p}{2}}+2^{\frac{\p}{2}}\sigma^{\p}\sum_{s=1}^{t}\eta_{s}^{\p}+\E\left[\left|\sum_{s=1}^{t}\eta_{s}\left\langle \bxi_{s},\left[\by_{s}\right]_{\star}-\by_{s}\right\rangle \right|^{\frac{\p}{2}}\right]\\
 & \overset{(a)}{\leq}\Breg_{1}^{\frac{\p}{2}}+2^{\frac{\p}{2}}M^{\p}\left(\sum_{s=1}^{t}\eta_{s}^{2}\right)^{\frac{\p}{2}}+2^{\frac{\p}{2}}\sigma^{\p}\sum_{s=1}^{t}\eta_{s}^{\p}+2^{1-\frac{\p}{4}}\sigma^{\frac{\p}{2}}\sqrt{\sum_{s=1}^{t}\E\left[\eta_{s}^{\p}\Breg_{s}^{\frac{\p}{2}}\right]}\\
 & \overset{(b)}{\leq}\Breg_{1}^{\frac{\p}{2}}+2^{\frac{\p}{2}}M^{\p}\left(\sum_{s=1}^{t}\eta_{s}^{2}\right)^{\frac{\p}{2}}+\left(2^{\frac{\p}{2}}+2^{1-\frac{\p}{2}}\right)\sigma^{\p}\sum_{s=1}^{t}\eta_{s}^{\p}+\frac{\max_{s\in\left[t\right]}\E\left[\Breg_{s}^{\frac{\p}{2}}\right]}{2},
\end{align*}
where $(a)$ is by applying Lemma \ref{lem:core} with $\eta_{s}\left(\left[\by_{s}\right]_{\star}-\by_{s}\right)$
to have
\[
\E\left[\left|\sum_{s=1}^{t}\eta_{s}\left\langle \bxi_{s},\left[\by_{s}\right]_{\star}-\by_{s}\right\rangle \right|^{\frac{\p}{2}}\right]\leq2^{1-\frac{\p}{2}}\sigma^{\frac{\p}{2}}\sqrt{\sum_{s=1}^{t}\E\left[\eta_{s}^{\p}\left\Vert \left[\by_{s}\right]_{\star}-\by_{s}\right\Vert ^{\p}\right]}\overset{(\ref{eq:ASMD-cvx-exp-mirror})}{\leq}2^{1-\frac{\p}{4}}\sigma^{\frac{\p}{2}}\sqrt{\sum_{s=1}^{t}\E\left[\eta_{s}^{\p}\Breg_{s}^{\frac{\p}{2}}\right]},
\]
and $(b)$ follows from
\[
\sqrt{\sum_{s=1}^{t}\E\left[\eta_{s}^{\p}\Breg_{s}^{\frac{\p}{2}}\right]}\leq\sqrt{\sum_{s=1}^{t}\eta_{s}^{\p}\max_{s\in\left[t\right]}\E\left[\Breg_{s}^{\frac{\p}{2}}\right]}\leq\frac{\sigma^{\frac{\p}{2}}\sum_{s=1}^{t}\eta_{s}^{\p}}{2^{\frac{\p}{4}}}+\frac{\max_{s\in\left[t\right]}\E\left[\Breg_{s}^{\frac{\p}{2}}\right]}{2^{2-\frac{\p}{4}}\sigma^{\frac{\p}{2}}},
\]
in which the last step is due to AM-GM inequality.

Following essentially the same proof of (\ref{eq:SMD-cvx-core-3})
obtains
\[
\E\left[\left(\frac{\eta_{T}\Delta_{T+1}}{w_{T}}\right)^{\frac{\p}{2}}\right]\lesssim\Breg_{1}^{\frac{\p}{2}}+M^{\p}\left(\sum_{t=1}^{T}\eta_{t}^{2}\right)^{\frac{\p}{2}}+\sigma^{\p}\sum_{t=1}^{T}\eta_{t}^{\p}.
\]
Finally, we divide both sides by $(\eta_{T}/w_{T})^{\frac{\p}{2}}$
to conclude.
\end{proof}

\subsection{Helpful Lemmas}

We prove some helpful lemmas used in the proofs.
\begin{lem}
\label{lem:SMD-descent}Under Assumption \ref{assu:SMD}, for any
$\by\in\dom F$, $t\in\N$, and $\eta_{t}\leq\frac{1}{2L}$, $\SMD$
(Algorithm \ref{alg:SMD}) guarantees that
\[
F(\bx_{t+1})-F(\by)\leq\frac{\Breg_{\psi}(\by,\bx_{t})-\Breg_{\psi}(\by,\bx_{t+1})}{\eta_{t}}+\eta_{t}\left(M+\left\Vert \bxi_{t}\right\Vert _{\star}\right)^{2}+\left\langle \bxi_{t},\by-\bx_{t}\right\rangle .
\]
\end{lem}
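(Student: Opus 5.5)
The plan is the standard one-step analysis of composite mirror descent. The only twist is that the stochastic gradient is split as $\bg_{t}=\nabla f(\bx_{t})+\bxi_{t}$, so the noise appears both in an inner product with $\by-\bx_{t}$ and, linearly, in a term that we absorb with a Young-type bound.

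\textbf{Step 1 (three-point inequality).} The optimality condition for $\bx_{t+1}$ says there is $\bv\in\partial r(\bx_{t+1})$ with $\bv+\bg_{t}+\frac{\nabla\psi(\bx_{t+1})-\nabla\psi(\bx_{t})}{\eta_{t}}=\bzero$. Combining it with convexity of $r$ and the three-point identity for Bregman divergences gives, for any $\by\in\dom F$,
\[
r(\bx_{t+1})-r(\by)+\left\langle \bg_{t},\bx_{t+1}-\by\right\rangle \leq\frac{\Breg_{\psi}(\by,\bx_{t})-\Breg_{\psi}(\by,\bx_{t+1})-\Breg_{\psi}(\bx_{t+1},\bx_{t})}{\eta_{t}}.
\]

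\textbf{Step 2 (handle $f$).} Write $f(\bx_{t+1})-f(\by)=\left[f(\bx_{t+1})-f(\bx_{t})\right]+\left[f(\bx_{t})-f(\by)\right]$.
\begin{itemize}
\item For the first bracket, $(L,M)$-relative smoothness gives $f(\bx_{t+1})-f(\bx_{t})\leq\left\langle \nabla f(\bx_{t}),\bx_{t+1}-\bx_{t}\right\rangle +L\Breg_{\psi}(\bx_{t+1},\bx_{t})+M\sqrt{2\Breg_{\psi}(\bx_{t+1},\bx_{t})}$.
\item For the second bracket, convexity gives $f(\bx_{t})-f(\by)\leq\left\langle \nabla f(\bx_{t}),\bx_{t}-\by\right\rangle$.
\end{itemize}
Adding these, we get $f(\bx_{t+1})-f(\by)\leq\left\langle \nabla f(\bx_{t}),\bx_{t+1}-\by\right\rangle +L\Breg_{\psi}(\bx_{t+1},\bx_{t})+M\sqrt{2\Breg_{\psi}(\bx_{t+1},\bx_{t})}$. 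Now substitute $\nabla f(\bx_{t})=\bg_{t}-\bxi_{t}$ and add the result to Step 1. This yields
\[
F(\bx_{t+1})-F(\by)\leq\frac{\Breg_{\psi}(\by,\bx_{t})-\Breg_{\psi}(\by,\bx_{t+1})}{\eta_{t}}-\Big(\tfrac{1}{\eta_{t}}-L\Big)\Breg_{\psi}(\bx_{t+1},\bx_{t})+M\sqrt{2\Breg_{\psi}(\bx_{t+1},\bx_{t})}+\left\langle \bxi_{t},\by-\bx_{t+1}\right\rangle .
\]

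\textbf{Step 3 (absorb the residual).} Split the noise term as $\left\langle \bxi_{t},\by-\bx_{t+1}\right\rangle =\left\langle \bxi_{t},\by-\bx_{t}\right\rangle +\left\langle \bxi_{t},\bx_{t}-\bx_{t+1}\right\rangle$. The second piece is at most $\left\Vert \bxi_{t}\right\Vert _{\star}\left\Vert \bx_{t+1}-\bx_{t}\right\Vert \leq\left\Vert \bxi_{t}\right\Vert _{\star}\sqrt{2\Breg_{\psi}(\bx_{t+1},\bx_{t})}$ by $1$-strong convexity of $\psi$ on $\dom F$. Set $D\defeq\Breg_{\psi}(\bx_{t+1},\bx_{t})$. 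Since $\eta_{t}\leq\frac{1}{2L}$, we have $\frac{1}{\eta_{t}}-L\geq\frac{1}{2\eta_{t}}$, so the leftover terms satisfy
\[
-\frac{D}{2\eta_{t}}+(M+\left\Vert \bxi_{t}\right\Vert _{\star})\sqrt{2D}\leq\sup_{u\geq0}\Big\{-\frac{u^{2}}{4\eta_{t}}+(M+\left\Vert \bxi_{t}\right\Vert _{\star})u\Big\}=\eta_{t}(M+\left\Vert \bxi_{t}\right\Vert _{\star})^{2},
\]
where we put $u=\sqrt{2D}$. This is exactly the claimed inequality.

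\textbf{Main obstacle.} The argument is routine. The only care needed is to make sure the three-point inequality is valid in the composite setting: the minimizer $\bx_{t+1}$ must exist and lie in $\dom F\subseteq\interior\dom\psi$ (standard under the stated conditions on $\psi$ and $r$), and the strong-convexity bound requires both $\bx_{t},\bx_{t+1}\in\dom F$. We use this inequality for both iterates as stated.
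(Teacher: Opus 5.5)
Your proof is correct and is essentially the paper's argument. It uses the same ingredients: $(L,M)$-relative smoothness at $\bx_{t}$, convexity of $f$, the prox optimality condition with convexity of $r$ and the three-point identity, Cauchy--Schwarz plus $1$-strong convexity of $\psi$ for $\left\langle \bxi_{t},\bx_{t}-\bx_{t+1}\right\rangle$, and $\eta_{t}\leq\frac{1}{2L}$ followed by AM--GM. The only difference is bookkeeping: you merge the smoothness and convexity bounds before substituting $\nabla f(\bx_{t})=\bg_{t}-\bxi_{t}$, while the paper first splits $\left\langle \nabla f(\bx_{t}),\bx_{t+1}-\bx_{t}\right\rangle$ into four terms.
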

\begin{proof}
In the following proof, we fix $\by\in\dom F$. By $(L,M)$-relative
smoothness, we know
\begin{equation}
f(\bx_{t+1})-f(\bx_{t})\leq\left\langle \nabla f(\bx_{t}),\bx_{t+1}-\bx_{t}\right\rangle +L\Breg_{\psi}(\bx_{t+1},\bx_{t})+M\sqrt{2\Breg_{\psi}(\bx_{t+1},\bx_{t})}.\label{eq:SMD-descent-1}
\end{equation}
Note that we have the decomposition
\begin{align}
\left\langle \nabla f(\bx_{t}),\bx_{t+1}-\bx_{t}\right\rangle  & =\left\langle \bxi_{t},\bx_{t}-\bx_{t+1}\right\rangle +\left\langle \bg_{t},\bx_{t+1}-\by\right\rangle +\left\langle \nabla f(\bx_{t}),\by-\bx_{t}\right\rangle +\left\langle \bxi_{t},\by-\bx_{t}\right\rangle \nonumber \\
 & \leq\underbrace{\left\langle \bxi_{t},\bx_{t}-\bx_{t+1}\right\rangle }_{\mathrm{I}}+\underbrace{\left\langle \bg_{t},\bx_{t+1}-\by\right\rangle }_{\mathrm{II}}+f(\by)-f(\bx_{t})+\left\langle \bxi_{t},\by-\bx_{t}\right\rangle ,\label{eq:SMD-descent-2}
\end{align}
where the last step is by the convexity of $f$. Next, we bound the
left two terms in the following.

\begin{itemize}[leftmargin=*]

\item Term $\mathrm{I}$. By Cauchy-Schwarz inequality and $1$-strong
convexity of $\psi$, we have
\begin{equation}
\left\langle \bxi_{t},\bx_{t}-\bx_{t+1}\right\rangle \leq\left\Vert \bxi_{t}\right\Vert _{\star}\left\Vert \bx_{t}-\bx_{t+1}\right\Vert \leq\left\Vert \bxi_{t}\right\Vert _{\star}\sqrt{2\Breg_{\psi}(\bx_{t+1},\bx_{t})}\label{eq:SMD-descent-I}
\end{equation}

\item Term $\mathrm{II}$. By the update rule of Algorithm \ref{alg:SMD},
there exists $\nabla r(\bx_{t+1})\in\partial r(\bx_{t+1})$ such that
\[
\left\langle \nabla r(\bx_{t+1})+\bg_{t}+\frac{\nabla\psi(\bx_{t+1})-\nabla\psi(\bx_{t})}{\eta_{t}},\bx_{t+1}-\by\right\rangle \leq0,
\]
which implies that
\begin{align}
\left\langle \bg_{t},\bx_{t+1}-\by\right\rangle  & \leq\frac{\left\langle \nabla\psi(\bx_{t})-\nabla\psi(\bx_{t+1}),\bx_{t+1}-\by\right\rangle }{\eta_{t}}+\left\langle \nabla r(\bx_{t+1}),\by-\bx_{t+1}\right\rangle \nonumber \\
 & \leq\frac{\left\langle \nabla\psi(\bx_{t})-\nabla\psi(\bx_{t+1}),\bx_{t+1}-\by\right\rangle }{\eta_{t}}+r(\by)-r(\bx_{t+1})\nonumber \\
 & =\frac{\Breg_{\psi}(\by,\bx_{t})-\Breg_{\psi}(\by,\bx_{t+1})-\Breg_{\psi}(\bx_{t+1},\bx_{t})}{\eta_{t}}+r(\by)-r(\bx_{t+1}),\label{eq:SMD-descent-II}
\end{align}
where the second inequality is due to the convexity of $r$.

\end{itemize}

Plug (\ref{eq:SMD-descent-I}) and (\ref{eq:SMD-descent-II}) back
into (\ref{eq:SMD-descent-2}) and rearrange terms to obtain
\begin{align*}
\left\langle \nabla f(\bx_{t}),\bx_{t+1}-\bx_{t}\right\rangle \leq & \frac{\Breg_{\psi}(\by,\bx_{t})-\Breg_{\psi}(\by,\bx_{t+1})}{\eta_{t}}+\left\Vert \bxi_{t}\right\Vert _{\star}\sqrt{2\Breg_{\psi}(\bx_{t+1},\bx_{t})}-\frac{\Breg_{\psi}(\bx_{t+1},\bx_{t})}{\eta_{t}}\\
 & +F(\by)-f(\bx_{t})-r(\bx_{t+1})+\left\langle \bxi_{t},\by-\bx_{t}\right\rangle .
\end{align*}
We combine the inequality above and (\ref{eq:SMD-descent-1}), rearrange
terms, and use $\eta_{t}\leq\frac{1}{2L}$ to conclude that
\begin{align*}
F(\bx_{t+1})-F(\by)\leq & \frac{\Breg_{\psi}(\by,\bx_{t})-\Breg_{\psi}(\by,\bx_{t+1})}{\eta_{t}}+\left\langle \bxi_{t},\by-\bx_{t}\right\rangle \\
 & +\left(M+\left\Vert \bxi_{t}\right\Vert _{\star}\right)\sqrt{2\Breg_{\psi}(\bx_{t+1},\bx_{t})}-\frac{\Breg_{\psi}(\bx_{t+1},\bx_{t})}{2\eta_{t}}\\
\leq & \frac{\Breg_{\psi}(\by,\bx_{t})-\Breg_{\psi}(\by,\bx_{t+1})}{\eta_{t}}+\left\langle \bxi_{t},\by-\bx_{t}\right\rangle +\eta_{t}\left(M+\left\Vert \bxi_{t}\right\Vert _{\star}\right)^{2},
\end{align*}
where the last step is due to AM-GM inequality.
\end{proof}

\begin{lem}
\label{lem:SMD-core}Under Assumption \ref{assu:SMD}, for any $T\in\N$
and $\left\{ \eta_{t}\leq\frac{1}{2L}\right\} _{t=1}^{T}$, $\SMD$
(Algorithm \ref{alg:SMD}) guarantees that, for any $t\in\left[T\right]$,
\begin{align*}
 & \frac{\gamma_{t+1}}{\eta_{t+1}}\Breg_{\psi}(\X_{\star},\bx_{t+1})+\sum_{s=1}^{t}\frac{\gamma_{s}\Delta_{s+1}}{2}\leq\Breg_{\psi}(\X_{\star},\bx_{1})+\sum_{s=1}^{t}\eta_{s}\gamma_{s}\left(M+\left\Vert \bxi_{s}\right\Vert _{\star}\right)^{2}+\sum_{s=1}^{t}\gamma_{s}\left\langle \bxi_{s},\left[\bx_{s}\right]_{\star}-\bx_{s}\right\rangle ,\\
 & v_{t}\Breg_{\psi}(\bz_{t},\bx_{t+1})+v_{t}\eta_{t}\Delta_{t+1}\leq v_{0}\Breg_{\psi}(\X_{\star},\bx_{1})+\sum_{s=1}^{t}v_{s}\eta_{s}^{2}\left(M+\left\Vert \bxi_{s}\right\Vert _{\star}\right)^{2}+\sum_{s=1}^{t}v_{s-1}\eta_{s}\left\langle \bxi_{s},\bz_{s-1}-\bx_{s}\right\rangle ,
\end{align*}
where $\left\{ \gamma_{t}\right\} _{t=1}^{T+1}$ is a sequence defined
in (\ref{eq:SMD-core-gamma-def}), $\left\{ v_{t}\right\} _{t=0}^{T}$
is a sequence defined in (\ref{eq:SMD-core-v-def}), and $\left\{ \bz_{t}\right\} _{t=0}^{T}$
is a sequence defined in (\ref{eq:SMD-core-z-def}).
\end{lem}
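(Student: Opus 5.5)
The proof handles the two inequalities separately. Both come from summing a weighted version of the one-step bound in Lemma \ref{lem:SMD-descent}. Since (\ref{eq:SMD-core-gamma-def}), (\ref{eq:SMD-core-v-def}) and (\ref{eq:SMD-core-z-def}) appear only later, I reconstruct them from how they are used in the proofs above. I take $\gamma_{t}=\eta_{t}\prod_{s=1}^{t-1}(1+\frac{\mu\eta_{s}}{2})$, so that $\gamma_{1}/\eta_{1}=1$ and $\frac{\gamma_{s}}{\eta_{s}}(1+\frac{\mu\eta_{s}}{2})=\frac{\gamma_{s+1}}{\eta_{s+1}}$. I take $v_{t}=\frac{\eta_{T}}{\sum_{s=t}^{T}\eta_{s}}$ with $v_{0}=v_{1}$; this sequence is nondecreasing. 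For $\bz$ I take the standard last-iterate auxiliary sequence: $\bz_{0}=[\bx_{1}]_{\star}$ and $\bz_{s}=\frac{v_{s-1}}{v_{s}}\bz_{s-1}+(1-\frac{v_{s-1}}{v_{s}})\bx_{s}$, a convex combination that lies in $\dom F$ and satisfies $\bz_{s-1}\in\F_{s-1}$.

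\emph{First inequality.} Apply Lemma \ref{lem:SMD-descent} at step $s$ with $\by=[\bx_{s}]_{\star}\in\F_{s-1}$. Using $\Breg_{\psi}([\bx_{s}]_{\star},\bx_{s})=\Breg_{\psi}(\X_{\star},\bx_{s})$ and $\Breg_{\psi}([\bx_{s}]_{\star},\bx_{s+1})\geq\Breg_{\psi}(\X_{\star},\bx_{s+1})$, this gives
\[
\Delta_{s+1}+\frac{\Breg_{\psi}(\X_{\star},\bx_{s+1})}{\eta_{s}}\leq\frac{\Breg_{\psi}(\X_{\star},\bx_{s})}{\eta_{s}}+\eta_{s}\left(M+\left\Vert \bxi_{s}\right\Vert _{\star}\right)^{2}+\left\langle \bxi_{s},[\bx_{s}]_{\star}-\bx_{s}\right\rangle .
\]
Next, split $\Delta_{s+1}=\frac{\Delta_{s+1}}{2}+\frac{\Delta_{s+1}}{2}$ and apply the Bregman growth condition to one half, $\frac{\Delta_{s+1}}{2}\geq\frac{\mu}{2}\Breg_{\psi}(\X_{\star},\bx_{s+1})$. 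The left side becomes $\frac{\Delta_{s+1}}{2}+\frac{1+\mu\eta_{s}/2}{\eta_{s}}\Breg_{\psi}(\X_{\star},\bx_{s+1})$. Multiplying by $\gamma_{s}$ turns the distance coefficient into $\frac{\gamma_{s+1}}{\eta_{s+1}}$, while the previous distance term carries $\frac{\gamma_{s}}{\eta_{s}}$. Summing over $s=1,\dots,t$ therefore telescopes, and $\gamma_{1}/\eta_{1}=1$ yields exactly the first claimed bound.

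\emph{Second inequality.} Apply Lemma \ref{lem:SMD-descent} at step $s$ with $\by=\bz_{s}$, then multiply by $v_{s}\eta_{s}$. Three facts are used.
\begin{enumerate}
\item Convexity of $\Breg_{\psi}(\cdot,\bx_{s})$ in its first argument, together with $\Breg_{\psi}(\bx_{s},\bx_{s})=0$, gives $v_{s}\Breg_{\psi}(\bz_{s},\bx_{s})\leq v_{s-1}\Breg_{\psi}(\bz_{s-1},\bx_{s})$. Hence the distance terms telescope into $v_{t}\Breg_{\psi}(\bz_{t},\bx_{t+1})$ on the left and $v_{0}\Breg_{\psi}(\bz_{0},\bx_{1})=v_{0}\Breg_{\psi}(\X_{\star},\bx_{1})$ on the right.
\item The noise term is affine in $\by$, so $\langle\bxi_{s},\bz_{s}-\bx_{s}\rangle=\frac{v_{s-1}}{v_{s}}\langle\bxi_{s},\bz_{s-1}-\bx_{s}\rangle$. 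After multiplying by $v_{s}\eta_{s}$ this is precisely $v_{s-1}\eta_{s}\langle\bxi_{s},\bz_{s-1}-\bx_{s}\rangle$, which keeps a predictable vector $\bz_{s-1}-\bx_{s}$ so that Lemma \ref{lem:core} can be applied later.
\item Convexity of $F$ gives $F(\bz_{s})-F_{\star}\leq\frac{v_{s-1}}{v_{s}}(F(\bz_{s-1})-F_{\star})+(1-\frac{v_{s-1}}{v_{s}})\Delta_{s}$, with $F(\bz_{0})=F_{\star}$.
\end{enumerate}

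\emph{Main obstacle.} The delicate step is checking that the function-value terms telescope down to the single term $v_{t}\eta_{t}\Delta_{t+1}$. Expanding $\bz_{s}$ recursively shows that the weight on $\Delta_{r}$ collected from later steps $s>r$ is $\sum_{s>r}\eta_{s}(v_{s}-v_{s-1})\prod(\cdot)$. This must be cancelled by the gain $v_{r-1}\eta_{r-1}\Delta_{r}$ from the previous step, leaving a nonpositive remainder. The identity that should make this work is $\frac{\eta_{T}}{v_{s-1}}=\frac{\eta_{T}}{v_{s}}+\eta_{s-1}$, equivalently $\eta_{s}(v_{s}-v_{s-1})=\frac{\eta_{s-1}\eta_{s}v_{s}v_{s-1}}{\eta_{T}}$. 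If the direct bookkeeping gets messy, I would instead use the following equivalent route. Define $\bz_{s}$ explicitly as the weighted average $\frac{v_{0}}{v_{s}}\bz_{0}+\sum_{r\le s}\frac{v_{r}-v_{r-1}}{v_{s}}\bx_{r}$. Then check the telescoping coefficient by coefficient, as in the last-iterate analyses of \citep{liu2024revisiting}, at the cost of some care about indices.
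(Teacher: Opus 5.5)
Your proposal is correct and follows the paper's proof essentially step for step: the same $\gamma_t$, $v_t$, $\bz_t$, the same half-split of $\Delta_{s+1}$ against Bregman growth for the first bound, and for the second the same two telescoping facts plus convexity of $F$ at $\bz_s$ (your recursive bound unrolls to exactly the paper's Jensen bound obtained from the explicit weighted-average form of $\bz_s$). One correction to your bookkeeping sketch: the total weight on $\Delta_r$ is $(v_r-v_{r-1})\sum_{s=r}^{t}\eta_s$ rather than what you wrote (it is zero for $r=1$ since $v_0=v_1$); for $r\ge2$, your identity $v_r-v_{r-1}=\eta_{r-1}v_rv_{r-1}/\eta_T$ combined with $\sum_{s=r}^{t}\eta_s\le\sum_{s=r}^{T}\eta_s=\eta_T/v_r$ (this is where $t\le T$ is used) bounds it by exactly $v_{r-1}\eta_{r-1}$, which is how the paper finishes.
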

\begin{proof}
In the following, we fix $T\in\N$.

\begin{itemize}[leftmargin=*]

\item For the first inequality, we invoke Lemma \ref{lem:SMD-descent}
with $\by=\left[\bx_{t}\right]_{\star}$ and use
\begin{eqnarray*}
F(\left[\bx_{t}\right]_{\star})=F_{\star}, & \Breg_{\psi}(\left[\bx_{t}\right]_{\star},\bx_{t})=\Breg_{\psi}(\X_{\star},\bx_{t}), & \Breg_{\psi}(\left[\bx_{t}\right]_{\star},\bx_{t+1})\geq\Breg_{\psi}(\X_{\star},\bx_{t+1})
\end{eqnarray*}
to have
\[
\Delta_{t+1}\leq\frac{\Breg_{\psi}(\X_{\star},\bx_{t})-\Breg_{\psi}(\X_{\star},\bx_{t+1})}{\eta_{t}}+\eta_{t}\left(M+\left\Vert \bxi_{t}\right\Vert _{\star}\right)^{2}+\left\langle \bxi_{t},\left[\bx_{t}\right]_{\star}-\bx_{t}\right\rangle ,
\]
which, by the $\mu$-Bregman growth condition, further implies that
\[
\frac{\mu\Breg_{\psi}(\X_{\star},\bx_{t+1})+\Delta_{t+1}}{2}\leq\frac{\Breg_{\psi}(\X_{\star},\bx_{t})-\Breg_{\psi}(\X_{\star},\bx_{t+1})}{\eta_{t}}+\eta_{t}\left(M+\left\Vert \bxi_{t}\right\Vert _{\star}\right)^{2}+\left\langle \bxi_{t},\left[\bx_{t}\right]_{\star}-\bx_{t}\right\rangle .
\]
Multiply both sides of the above inequality by $\gamma_{t}$ and rearrange
terms to obtain
\[
\frac{\gamma_{t+1}}{\eta_{t+1}}\Breg_{\psi}(\X_{\star},\bx_{t+1})+\frac{\gamma_{t}\Delta_{t+1}}{2}\leq\frac{\gamma_{t}}{\eta_{t}}\Breg_{\psi}(\X_{\star},\bx_{t})+\eta_{t}\gamma_{t}\left(M+\left\Vert \bxi_{t}\right\Vert _{\star}\right)^{2}+\gamma_{t}\left\langle \bxi_{t},\left[\bx_{t}\right]_{\star}-\bx_{t}\right\rangle ,
\]
where $\left\{ \gamma_{t}\right\} _{t=1}^{T+1}$ is a sequence defined
as
\begin{equation}
\gamma_{t}\defeq\eta_{t}\prod_{s=1}^{t-1}\left(1+\frac{\mu\eta_{s}}{2}\right),\forall t\in\left[T+1\right].\label{eq:SMD-core-gamma-def}
\end{equation}
Finally, a direct calculation yields the desired result.

\item As for the second inequality, it can in fact be derived from
existing results in the literature, e.g., Lemma G.1 of \citep{liu2024revisiting}.
However, to make the paper self-contained, we reproduce the proof
below. We first define a nondecreasing sequence $\left\{ v_{t}\right\} _{t=0}^{T}$
as
\begin{eqnarray}
v_{t}\defeq\frac{\eta_{T}}{\sum_{s=t}^{T}\eta_{s}},\forall t\in\left[T\right] & \text{and} & v_{0}\defeq v_{1}.\label{eq:SMD-core-v-def}
\end{eqnarray}
Equipped with $\left\{ v_{t}\right\} _{t=0}^{T}$, we introduce another
sequence $\left\{ \bz_{t}\right\} _{t=0}^{T}$ recursively defined
as
\begin{eqnarray}
\bz_{t}\defeq\frac{v_{t-1}}{v_{t}}\bz_{t-1}+\left(1-\frac{v_{t-1}}{v_{t}}\right)\bx_{t},\forall t\in\left[T\right] & \text{where} & \bz_{0}\defeq\left[\bx_{1}\right]_{\star}.\label{eq:SMD-core-z-def}
\end{eqnarray}
Given $t\in\left\{ 0\right\} \cup\left[T\right]$, by expanding the
definition of $\bz_{t}$, we have
\begin{equation}
\bz_{t}=\frac{v_{0}}{v_{t}}\left[\bx_{1}\right]_{\star}+\sum_{s=1}^{t}\frac{v_{s}-v_{s-1}}{v_{t}}\bx_{s},\label{eq:SMD-core-z-prop}
\end{equation}
implying that $\bz_{t}\in\dom F$, as $\left\{ v_{t}\right\} _{t=0}^{T}$
is a nondecreasing sequence and $\left\{ \left[\bx_{1}\right]_{\star}\right\} \cup\left\{ \bx_{t}\right\} _{t=1}^{T}\subseteq\dom F$.
In particular, we have
\begin{eqnarray}
F(\bz_{0})=F(\left[\bx_{1}\right]_{\star})=F_{\star} & \text{and} & \Breg_{\psi}(\bz_{0},\bx_{1})=\Breg_{\psi}(\left[\bx_{1}\right]_{\star},\bx_{1})=\Breg_{\psi}(\X_{\star},\bx_{1}).\label{eq:SMD-core-z-value}
\end{eqnarray}

Now, we invoke Lemma \ref{lem:SMD-descent} with $\by=\bz_{t}$ at
$t\in\left[T\right]$ and multiply both sides by $v_{t}\eta_{t}$
to obtain
\begin{align}
 & v_{t}\eta_{t}\left(F(\bx_{t+1})-F(\bz_{t})\right)\nonumber \\
\leq & v_{t}\Breg_{\psi}(\bz_{t},\bx_{t})-v_{t}\Breg_{\psi}(\bz_{t},\bx_{t+1})+v_{t}\eta_{t}^{2}\left(M+\left\Vert \bxi_{t}\right\Vert _{\star}\right)^{2}+v_{t}\eta_{t}\left\langle \bxi_{t},\bz_{t}-\bx_{t}\right\rangle \nonumber \\
\leq & v_{t-1}\Breg_{\psi}(\bz_{t-1},\bx_{t})-v_{t}\Breg_{\psi}(\bz_{t},\bx_{t+1})+v_{t}\eta_{t}^{2}\left(M+\left\Vert \bxi_{t}\right\Vert _{\star}\right)^{2}+v_{t-1}\eta_{t}\left\langle \bxi_{t},\bz_{t-1}-\bx_{t}\right\rangle ,\label{eq:SMD-core-1}
\end{align}
where the second step is by $\Breg_{\psi}(\bz_{t},\bx_{t})\overset{(\ref{eq:SMD-core-z-def})}{\leq}\frac{v_{t-1}}{v_{t}}\Breg_{\psi}(\bz_{t-1},\bx_{t})+\left(1-\frac{v_{t-1}}{v_{t}}\right)\Breg_{\psi}(\bx_{t},\bx_{t})=\frac{v_{t-1}}{v_{t}}\Breg_{\psi}(\bz_{t-1},\bx_{t})$
and $v_{t}(\bz_{t}-\bx_{t})\overset{(\ref{eq:SMD-core-z-def})}{=}v_{t-1}(\bz_{t-1}-\bx_{t})$.
Relabel $t$ by $s$ in (\ref{eq:SMD-core-1}), sum the obtained inequality
from $s=1$ to $t\leq T$, rearrange terms, and use (\ref{eq:SMD-core-z-value})
to have
\begin{align*}
 & v_{t}\Breg_{\psi}(\bz_{t},\bx_{t+1})+\sum_{s=1}^{t}v_{s}\eta_{s}\left(F(\bx_{s+1})-F(\bz_{s})\right)\\
\leq & v_{0}\Breg_{\psi}(\X_{\star},\bx_{1})+\sum_{s=1}^{t}v_{s}\eta_{s}^{2}\left(M+\left\Vert \bxi_{s}\right\Vert _{\star}\right)^{2}+\sum_{s=1}^{t}v_{s-1}\eta_{s}\left\langle \bxi_{s},\bz_{s-1}-\bx_{s}\right\rangle .
\end{align*}

Therefore, we only need to show $\sum_{s=1}^{t}v_{s}\eta_{s}\left(F(\bx_{s+1})-F(\bz_{s})\right)\geq v_{t}\eta_{t}\Delta_{t+1}$.
This can be done by observing that
\begin{align*}
 & \sum_{s=1}^{t}v_{s}\eta_{s}\left(F(\bz_{s})-F_{\star}\right)\overset{(\ref{eq:SMD-core-z-prop})}{\leq}\sum_{s=1}^{t}v_{s}\eta_{s}\sum_{\ell=1}^{s}\frac{v_{\ell}-v_{\ell-1}}{v_{s}}\Delta_{\ell}=\sum_{\ell=1}^{t}\left(\sum_{s=\ell}^{t}\eta_{s}\right)\left(v_{\ell}-v_{\ell-1}\right)\Delta_{\ell}\\
\overset{(\ref{eq:SMD-core-v-def})}{=} & \sum_{\ell=2}^{t}\left(\sum_{s=\ell}^{t}\eta_{s}\right)\frac{\eta_{T}\eta_{\ell-1}}{\left(\sum_{k=\ell}^{T}\eta_{k}\right)\left(\sum_{k=\ell-1}^{T}\eta_{k}\right)}\Delta_{\ell}\overset{t\leq T}{\leq}\sum_{\ell=2}^{t}\frac{\eta_{T}\eta_{\ell-1}}{\sum_{k=\ell-1}^{T}\eta_{k}}\Delta_{\ell}=\sum_{\ell=2}^{t}v_{\ell-1}\eta_{\ell-1}\Delta_{\ell},
\end{align*}
which implies that
\begin{align*}
\sum_{s=1}^{t}v_{s}\eta_{s}\left(F(\bx_{s+1})-F(\bz_{s})\right) & =\sum_{s=1}^{t}v_{s}\eta_{s}\Delta_{s+1}-\sum_{s=1}^{t}v_{s}\eta_{s}\left(F(\bz_{s})-F_{\star}\right)\\
 & \geq\sum_{s=1}^{t}v_{s}\eta_{s}\Delta_{s+1}-\sum_{\ell=2}^{t}v_{\ell-1}\eta_{\ell-1}\Delta_{\ell}=v_{t}\eta_{t}\Delta_{t+1}.
\end{align*}
\end{itemize}
\end{proof}

\begin{lem}
\label{lem:ASMD-descent}Under Assumption \ref{assu:ASMD}, for any
$\by\in\dom F$, $t\in\N$, $w_{t}\in\left[0,1\right]$, and $\eta_{t}\leq\frac{1}{2w_{t}L}$,
$\ASMD$ (Algorithm \ref{alg:ASMD}) guarantees that
\[
F(\bz_{t+1})-F(\by)\leq(1-w_{t})(F(\bz_{t})-F(\by))+\frac{w_{t}\Breg_{\psi}(\by,\by_{t})-w_{t}\Breg_{\psi}(\by,\by_{t+1})}{\eta_{t}}+\eta_{t}w_{t}\left(M+\left\Vert \bxi_{t}\right\Vert _{\star}\right)^{2}+w_{t}\left\langle \bxi_{t},\by-\by_{t}\right\rangle .
\]
\end{lem}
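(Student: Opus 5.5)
The plan is to mirror the proof of Lemma \ref{lem:SMD-descent}, now applied to the pair $(\bx_{t},\bz_{t+1})$. The key identity is
\[
\bz_{t+1}-\bx_{t}=w_{t}(\by_{t+1}-\by_{t}),
\]
which follows from $\bx_{t}=w_{t}\by_{t}+(1-w_{t})\bz_{t}$ and $\bz_{t+1}=w_{t}\by_{t+1}+(1-w_{t})\bz_{t}$. By $(L,M)$-smoothness,
\[
f(\bz_{t+1})\leq f(\bx_{t})+\left\langle \nabla f(\bx_{t}),\bz_{t+1}-\bx_{t}\right\rangle +\frac{Lw_{t}^{2}}{2}\left\Vert \by_{t+1}-\by_{t}\right\Vert ^{2}+Mw_{t}\left\Vert \by_{t+1}-\by_{t}\right\Vert .
\]
We split $\bz_{t+1}-\bx_{t}$ using $\bx_{t}=(1-w_{t})\bx_{t}+w_{t}\bx_{t}$:
\[
\bz_{t+1}-\bx_{t}=(1-w_{t})(\bz_{t}-\bx_{t})+w_{t}(\by_{t+1}-\bx_{t}).
\]
Convexity of $f$ bounds the first part by $(1-w_{t})(f(\bz_{t})-f(\bx_{t}))$. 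In the second part we write $\by_{t+1}-\bx_{t}=(\by_{t+1}-\by)+(\by-\bx_{t})$ and use $\nabla f(\bx_{t})=\bg_{t}-\bxi_{t}$. Convexity again gives $\left\langle \nabla f(\bx_{t}),\by-\bx_{t}\right\rangle \leq f(\by)-f(\bx_{t})$. Together these produce $f(\bz_{t+1})-f(\by)\leq(1-w_{t})(f(\bz_{t})-f(\by))$ plus the remainder terms.

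For the remainder we need three pieces:
\begin{itemize}
\item The term $w_{t}\left\langle \bg_{t},\by_{t+1}-\by\right\rangle $ is handled by the optimality condition of the $\by_{t+1}$ update, exactly as for Term II in Lemma \ref{lem:SMD-descent}. This gives $\frac{w_{t}}{\eta_{t}}\left(\Breg_{\psi}(\by,\by_{t})-\Breg_{\psi}(\by,\by_{t+1})-\Breg_{\psi}(\by_{t+1},\by_{t})\right)+w_{t}(r(\by)-r(\by_{t+1}))$.
\item The noise term is $-w_{t}\left\langle \bxi_{t},\by_{t+1}-\by\right\rangle =w_{t}\left\langle \bxi_{t},\by-\by_{t}\right\rangle +w_{t}\left\langle \bxi_{t},\by_{t}-\by_{t+1}\right\rangle $. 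The first piece is the desired martingale term. The second is at most $w_{t}\left\Vert \bxi_{t}\right\Vert _{\star}\left\Vert \by_{t+1}-\by_{t}\right\Vert $.
\item For $r$, convexity applied to $\bz_{t+1}=w_{t}\by_{t+1}+(1-w_{t})\bz_{t}$ gives $r(\bz_{t+1})\leq w_{t}r(\by_{t+1})+(1-w_{t})r(\bz_{t})$. Adding this to the $f$-bound assembles $F$ and yields the factor $(1-w_{t})(F(\bz_{t})-F(\by))$, with $w_{t}r(\by)$ absorbed into $F(\by)=w_{t}F(\by)+(1-w_{t})F(\by)$.
\end{itemize}

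The leftover quantity, with $u\defeq\left\Vert \by_{t+1}-\by_{t}\right\Vert $ and $\Breg_{\psi}(\by_{t+1},\by_{t})\geq u^{2}/2$ by strong convexity, is
\[
\frac{Lw_{t}^{2}}{2}u^{2}+w_{t}(M+\left\Vert \bxi_{t}\right\Vert _{\star})u-\frac{w_{t}}{2\eta_{t}}u^{2}.
\]
The condition $\eta_{t}\leq\frac{1}{2w_{t}L}$ gives $Lw_{t}^{2}/2\leq\frac{w_{t}}{4\eta_{t}}$, so the quadratic part is at most $-\frac{w_{t}}{4\eta_{t}}u^{2}$. AM-GM then bounds $w_{t}(M+\left\Vert \bxi_{t}\right\Vert _{\star})u-\frac{w_{t}}{4\eta_{t}}u^{2}\leq\eta_{t}w_{t}(M+\left\Vert \bxi_{t}\right\Vert _{\star})^{2}$, which is exactly the stated error term.

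The step I expect to need the most care is this bookkeeping of the $r$ terms and the convex-combination decomposition, so that the factor $(1-w_{t})$ lands on $F(\bz_{t})-F(\by)$ rather than on $f$ alone. The membership $\bz_{t}\in\dom F$ follows inductively because $\bz_{t}$ is a convex combination of points in $\dom F$. The case $w_{t}=0$ is trivial, since then $\bz_{t+1}=\bz_{t}$ and both sides vanish.
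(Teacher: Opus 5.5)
Your proof is correct and follows essentially the same route as the paper's. Both use the identity $\bz_{t+1}-\bx_t=w_t(\by_{t+1}-\by_t)$, convexity of $f$ at $\bz_t$ and $\by$, the optimality condition of the $\by_{t+1}$-update with the three-point identity, convexity of $r$ along $\bz_{t+1}=w_t\by_{t+1}+(1-w_t)\bz_t$, and a final AM-GM step; the only cosmetic difference is that you lower-bound $\Breg_{\psi}(\by_{t+1},\by_t)\geq\frac{1}{2}\left\Vert \by_{t+1}-\by_t\right\Vert^2$, whereas the paper upper-bounds the norm terms by $\sqrt{2\Breg_{\psi}(\by_{t+1},\by_t)}$. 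One tiny slip: for $w_t=0$ both sides equal $F(\bz_t)-F(\by)$ rather than vanishing, and your general argument already covers that case anyway.
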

\begin{proof}
In the following proof, we fix $\by\in\dom F$. By $(L,M)$-smoothness,
we know
\begin{align}
f(\bz_{t+1})-f(\bx_{t}) & \leq\left\langle \nabla f(\bx_{t}),\bz_{t+1}-\bx_{t}\right\rangle +\frac{L}{2}\left\Vert \bz_{t+1}-\bx_{t}\right\Vert ^{2}+M\left\Vert \bz_{t+1}-\bx_{t}\right\Vert \nonumber \\
 & \overset{(a)}{=}w_{t}\left\langle \nabla f(\bx_{t}),\by_{t+1}-\by_{t}\right\rangle +\frac{w_{t}^{2}L}{2}\left\Vert \by_{t+1}-\by_{t}\right\Vert ^{2}+w_{t}M\left\Vert \by_{t+1}-\by_{t}\right\Vert \nonumber \\
 & \overset{(b)}{\leq}w_{t}\left\langle \nabla f(\bx_{t}),\by_{t+1}-\by_{t}\right\rangle +w_{t}^{2}L\Breg_{\psi}(\by_{t+1},\by_{t})+w_{t}M\sqrt{2\Breg_{\psi}(\by_{t+1},\by_{t})},\label{eq:ASMD-descent-1}
\end{align}
where $(a)$ holds due to $\bz_{t+1}-\bx_{t}=w_{t}(\by_{t+1}-\by_{t})$
in $\ASMD$ and $(b)$ follows from $1$-strong convexity of $\psi$.
Note that we have the decomposition
\begin{align}
w_{t}\left\langle \nabla f(\bx_{t}),\by_{t+1}-\by_{t}\right\rangle = & w_{t}\left\langle \bxi_{t},\by_{t}-\by_{t+1}\right\rangle +w_{t}\left\langle \bg_{t},\by_{t+1}-\by\right\rangle \nonumber \\
 & +w_{t}\left\langle \nabla f(\bx_{t}),\by-\bx_{t}\right\rangle +(1-w_{t})\left\langle \nabla f(\bx_{t}),\bz_{t}-\bx_{t}\right\rangle +w_{t}\left\langle \bxi_{t},\by-\by_{t}\right\rangle \nonumber \\
\leq & \underbrace{w_{t}\left\langle \bxi_{t},\by_{t}-\by_{t+1}\right\rangle }_{\mathrm{I}}+\underbrace{w_{t}\left\langle \bg_{t},\by_{t+1}-\by\right\rangle }_{\mathrm{II}}\nonumber \\
 & +w_{t}(f(\by)-f(\bx_{t}))+(1-w_{t})(f(\bz_{t})-f(\bx_{t}))+w_{t}\left\langle \bxi_{t},\by-\by_{t}\right\rangle ,\label{eq:ASMD-descent-2}
\end{align}
where the last step is by the convexity of $f$. Next, we bound the
left two terms in the following.

\begin{itemize}[leftmargin=*]

\item Term $\mathrm{I}$. By Cauchy-Schwarz inequality and $1$-strong
convexity of $\psi$, we have
\begin{equation}
w_{t}\left\langle \bxi_{t},\by_{t}-\by_{t+1}\right\rangle \leq w_{t}\left\Vert \bxi_{t}\right\Vert _{\star}\left\Vert \by_{t}-\by_{t+1}\right\Vert \leq w_{t}\left\Vert \bxi_{t}\right\Vert _{\star}\sqrt{2\Breg_{\psi}(\by_{t+1},\by_{t})}.\label{eq:ASMD-descent-I}
\end{equation}

\item Term $\mathrm{II}$. By the update rule of Algorithm \ref{alg:ASMD},
there exists $\nabla r(\by_{t+1})\in\partial r(\by_{t+1})$ such that
\[
\left\langle \nabla r(\by_{t+1})+\bg_{t}+\frac{\nabla\psi(\by_{t+1})-\nabla\psi(\by_{t})}{\eta_{t}},\by_{t+1}-\by\right\rangle \leq0,
\]
which implies that
\begin{align}
w_{t}\left\langle \bg_{t},\by_{t+1}-\by\right\rangle  & \leq\frac{w_{t}\left\langle \nabla\psi(\by_{t})-\nabla\psi(\by_{t+1}),\by_{t+1}-\by\right\rangle }{\eta_{t}}+w_{t}\left\langle \nabla r(\by_{t+1}),\by-\by_{t+1}\right\rangle \nonumber \\
 & \leq\frac{w_{t}\left\langle \nabla\psi(\by_{t})-\nabla\psi(\by_{t+1}),\by_{t+1}-\by\right\rangle }{\eta_{t}}+w_{t}(r(\by)-r(\by_{t+1}))\nonumber \\
 & =\frac{w_{t}\Breg_{\psi}(\by,\by_{t})-w_{t}\Breg_{\psi}(\by,\by_{t+1})-w_{t}\Breg_{\psi}(\by_{t+1},\by_{t})}{\eta_{t}}+w_{t}(r(\by)-r(\by_{t+1})),\label{eq:ASMD-descent-II}
\end{align}
where the second inequality is due to the convexity of $r$.

\end{itemize}

Plug (\ref{eq:ASMD-descent-I}) and (\ref{eq:ASMD-descent-II}) back
into (\ref{eq:ASMD-descent-2}), rearrange terms, and use $r(\bz_{t+1})\leq w_{t}r(\by_{t+1})+(1-w_{t})r(\bz_{t})$
by convexity of $r$ and $\bz_{t+1}=w_{t}\by_{t+1}+(1-w_{t})\bz_{t}$
to obtain
\begin{align*}
w_{t}\left\langle \nabla f(\bx_{t}),\by_{t+1}-\by_{t}\right\rangle \leq & \frac{w_{t}\Breg_{\psi}(\by,\by_{t})-w_{t}\Breg_{\psi}(\by,\by_{t+1})}{\eta_{t}}+w_{t}\left\Vert \bxi_{t}\right\Vert _{\star}\sqrt{2\Breg_{\psi}(\by_{t+1},\by_{t})}-\frac{w_{t}\Breg_{\psi}(\by_{t+1},\by_{t})}{\eta_{t}}\\
 & +w_{t}F(\by)+(1-w_{t})F(\bz_{t})-f(\bx_{t})-r(\bz_{t+1})+w_{t}\left\langle \bxi_{t},\by-\by_{t}\right\rangle .
\end{align*}
We combine the inequality above and (\ref{eq:ASMD-descent-1}), rearrange
terms, and use $\eta_{t}\leq\frac{1}{2w_{t}L}$ to conclude that
\begin{align*}
F(\bz_{t+1})-F(\by)\leq & (1-w_{t})(F(\bz_{t})-F(\by))+\frac{w_{t}\Breg_{\psi}(\by,\by_{t})-w_{t}\Breg_{\psi}(\by,\by_{t+1})}{\eta_{t}}\\
 & +w_{t}\left\langle \bxi_{t},\by-\by_{t}\right\rangle +\left(M+\left\Vert \bxi_{t}\right\Vert _{\star}\right)w_{t}\sqrt{2\Breg_{\psi}(\by_{t+1},\by_{t})}-\frac{w_{t}\Breg_{\psi}(\by_{t+1},\by_{t})}{2\eta_{t}}\\
\leq & (1-w_{t})(F(\bz_{t})-F(\by))+\frac{w_{t}\Breg_{\psi}(\by,\by_{t})-w_{t}\Breg_{\psi}(\by,\by_{t+1})}{\eta_{t}}\\
 & +w_{t}\left\langle \bxi_{t},\by-\by_{t}\right\rangle +\eta_{t}w_{t}\left(M+\left\Vert \bxi_{t}\right\Vert _{\star}\right)^{2},
\end{align*}
where the last step is due to AM-GM inequality.
\end{proof}

\begin{lem}
\label{lem:ASMD-core}Under Assumption \ref{assu:ASMD}, for any $T\in\N$,
$\left\{ w_{t}\in\left[0,1\right]\right\} _{t=1}^{T}$ satisfying
$w_{1}=1$, and $\left\{ \eta_{t}\leq\frac{1}{2w_{t}L}\right\} _{t=1}^{T}$
satisfying $\frac{\eta_{t}}{w_{t}}\geq\frac{(1-w_{t+1})\eta_{t+1}}{w_{t+1}},\forall t\in\left[T-1\right]$,
$\ASMD$ (Algorithm \ref{alg:ASMD}) guarantees that, for any $t\in\left[T\right]$,
\[
\Breg_{\psi}(\X_{\star},\by_{t+1})+\frac{\eta_{t}\Delta_{t+1}}{w_{t}}\leq\Breg_{\psi}(\X_{\star},\by_{1})+\sum_{s=1}^{t}\eta_{s}^{2}\left(M+\left\Vert \bxi_{s}\right\Vert _{\star}\right)^{2}+\sum_{s=1}^{t}\eta_{s}\left\langle \bxi_{s},\left[\by_{s}\right]_{\star}-\by_{s}\right\rangle .
\]
\end{lem}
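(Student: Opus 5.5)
The proof is a Lyapunov-type telescoping built on the one-step inequality of Lemma \ref{lem:ASMD-descent}. At each iteration $s\in\left[t\right]$ we invoke Lemma \ref{lem:ASMD-descent} with the comparison point $\by=\left[\by_{s}\right]_{\star}\in\X_{\star}$, so that $F(\by)=F_{\star}$. The inequality then reads
\[
\Delta_{s+1}\leq(1-w_{s})\Delta_{s}+\frac{w_{s}\Breg_{\psi}(\left[\by_{s}\right]_{\star},\by_{s})-w_{s}\Breg_{\psi}(\left[\by_{s}\right]_{\star},\by_{s+1})}{\eta_{s}}+\eta_{s}w_{s}\left(M+\left\Vert \bxi_{s}\right\Vert _{\star}\right)^{2}+w_{s}\left\langle \bxi_{s},\left[\by_{s}\right]_{\star}-\by_{s}\right\rangle .
\]
We replace $\Breg_{\psi}(\left[\by_{s}\right]_{\star},\by_{s})$ by $\Breg_{\psi}(\X_{\star},\by_{s})$, which is an equality by the definition of $\left[\cdot\right]_{\star}$. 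We bound $\Breg_{\psi}(\left[\by_{s}\right]_{\star},\by_{s+1})\geq\Breg_{\psi}(\X_{\star},\by_{s+1})$ because the infimum over $\X_{\star}$ is at most the value at any particular minimizer. Multiplying through by $\eta_{s}/w_{s}>0$ gives
\[
\frac{\eta_{s}}{w_{s}}\Delta_{s+1}+\Breg_{\psi}(\X_{\star},\by_{s+1})\leq\frac{(1-w_{s})\eta_{s}}{w_{s}}\Delta_{s}+\Breg_{\psi}(\X_{\star},\by_{s})+\eta_{s}^{2}\left(M+\left\Vert \bxi_{s}\right\Vert _{\star}\right)^{2}+\eta_{s}\left\langle \bxi_{s},\left[\by_{s}\right]_{\star}-\by_{s}\right\rangle .
\]

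Next we chain these inequalities. The stepsize condition $\frac{\eta_{s-1}}{w_{s-1}}\geq\frac{(1-w_{s})\eta_{s}}{w_{s}}$ together with $\Delta_{s}\geq0$ gives $\frac{(1-w_{s})\eta_{s}}{w_{s}}\Delta_{s}\leq\frac{\eta_{s-1}}{w_{s-1}}\Delta_{s}$ for $s\geq2$. Hence the potential $\Phi_{s}\defeq\frac{\eta_{s-1}}{w_{s-1}}\Delta_{s}+\Breg_{\psi}(\X_{\star},\by_{s})$ satisfies
\[
\Phi_{s+1}\leq\Phi_{s}+\eta_{s}^{2}\left(M+\left\Vert \bxi_{s}\right\Vert _{\star}\right)^{2}+\eta_{s}\left\langle \bxi_{s},\left[\by_{s}\right]_{\star}-\by_{s}\right\rangle .
\]
For $s=1$ we use $w_{1}=1$, which kills the $\Delta_{1}$ term, so the base of the chain is exactly $\Breg_{\psi}(\X_{\star},\by_{1})$. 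Summing the recursion from $s=1$ to $t$ yields the claim.

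The only delicate points are bookkeeping. First, we need $\Delta_{s}\geq0$; this holds since $\bz_{s}\in\dom F$, as $\bz_{s}$ is a convex combination of points $\by_{\cdot}\in\dom r$. Second, the $s=1$ base case must be handled separately because $\eta_{0}/w_{0}$ is undefined. No expectation or heavy-tail argument enters here: the statement is purely pathwise. The real obstacle is only ensuring that the monotonicity condition on $\eta_{t}/w_{t}$ is used in the right direction so that the $(1-w_{s})$-weighted previous gap is absorbed into the previous potential.
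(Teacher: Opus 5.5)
Your proposal is correct and follows the paper's proof: invoke Lemma \ref{lem:ASMD-descent} at $\by=\left[\by_{s}\right]_{\star}$, multiply by $\eta_{s}/w_{s}$, then telescope. You close the telescoping with $w_{1}=1$ and the condition $\frac{\eta_{s-1}}{w_{s-1}}\geq\frac{(1-w_{s})\eta_{s}}{w_{s}}$ together with $\Delta_{s}\geq0$, which is exactly the ``direct calculation'' the paper leaves implicit.
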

\begin{proof}
In the following, we fix $T\in\N$. We invoke Lemma \ref{lem:ASMD-descent}
with $\by=\left[\by_{t}\right]_{\star}$ and use
\begin{eqnarray*}
F(\left[\by_{t}\right]_{\star})=F_{\star}, & \Breg_{\psi}(\left[\by_{t}\right]_{\star},\by_{t})=\Breg_{\psi}(\X_{\star},\by_{t}), & \Breg_{\psi}(\left[\by_{t}\right]_{\star},\by_{t+1})\geq\Breg_{\psi}(\X_{\star},\by_{t+1})
\end{eqnarray*}
to have
\[
\Delta_{t+1}\leq(1-w_{t})\Delta_{t}+\frac{w_{t}\Breg_{\psi}(\X_{\star},\by_{t})-w_{t}\Breg_{\psi}(\X_{\star},\by_{t+1})}{\eta_{t}}+\eta_{t}w_{t}\left(M+\left\Vert \bxi_{t}\right\Vert _{\star}\right)^{2}+w_{t}\left\langle \bxi_{t},\left[\by_{t}\right]_{\star}-\by_{t}\right\rangle .
\]
Multiply both sides of the above inequality by $\frac{\eta_{t}}{w_{t}}$
and rearrange terms to obtain
\[
\Breg_{\psi}(\X_{\star},\by_{t+1})+\frac{\eta_{t}\Delta_{t+1}}{w_{t}}\leq\Breg_{\psi}(\X_{\star},\by_{t})+\frac{(1-w_{t})\eta_{t}\Delta_{t}}{w_{t}}+\eta_{t}^{2}\left(M+\left\Vert \bxi_{t}\right\Vert _{\star}\right)^{2}+\eta_{t}\left\langle \bxi_{t},\left[\by_{t}\right]_{\star}-\by_{t}\right\rangle .
\]
Finally, using $w_{1}=1$ and $\frac{\eta_{t}}{w_{t}}\geq\frac{(1-w_{t+1})\eta_{t+1}}{w_{t+1}},\forall t\in\left[T-1\right]$,
a direct calculation yields the desired result.
\end{proof}

\section{Nonconvex Optimization\label{sec:proof-ncvx}}

\subsection{Full Theorems for $\protect\SGD$ and Proofs}
\begin{thm}[Full statement of Theorem \ref{thm:main-SGD-exp}]
\label{thm:SGD-exp}Under Assumptions \ref{assu:oracle} (with $h=F$)
and \ref{assu:SGD}, for any $T\in\N$ and $\left\{ \eta_{t}\leq\frac{1}{L}\right\} _{t=1}^{T}$,
$\SGD$ (Algorithm \ref{alg:SGD}) guarantees that
\[
\E\left[\left(\sum_{t=1}^{T}\eta_{t}\left\Vert \nabla F(\bx_{t})\right\Vert ^{2}\right)^{\frac{\p}{2}}\right]\lesssim\left(F(\bx_{1})-F_{\star}\right)^{\frac{\p}{2}}+\sigma^{\p}\left(\sum_{t=1}^{T}\eta_{t}^{\frac{\p}{2-\p}}\right)^{\frac{2-\p}{2}}+L^{\frac{\p}{2}}\sigma^{\p}\sum_{t=1}^{T}\eta_{t}^{\p}.
\]
 In particular, by plugging in different choices of $\left\{ \eta_{t}\right\} _{t=1}^{T}$,
we recover Theorem \ref{thm:main-SGD-exp}.
\end{thm}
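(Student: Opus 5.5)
We follow the three-step recipe of Section \ref{sec:analysis}. First, the descent inequality. By $L$-upper smoothness and $\bx_{t+1}-\bx_{t}=-\eta_{t}\bg_{t}=-\eta_{t}(\nabla F(\bx_{t})+\bxi_{t})$,
\[
F(\bx_{t+1})-F(\bx_{t})\leq-\eta_{t}\left\Vert \nabla F(\bx_{t})\right\Vert ^{2}-\eta_{t}\left\langle \bxi_{t},\nabla F(\bx_{t})\right\rangle +\frac{L\eta_{t}^{2}}{2}\left\Vert \nabla F(\bx_{t})+\bxi_{t}\right\Vert ^{2}.
\]
Using $\left\Vert a+b\right\Vert ^{2}\leq\frac{3}{2}\left\Vert a\right\Vert ^{2}+3\left\Vert b\right\Vert ^{2}$ (or a similar split) together with $\eta_{t}\leq\frac{1}{L}$, the $\left\Vert \nabla F(\bx_{t})\right\Vert ^{2}$ part of the quadratic term is absorbed, up to a constant, into $-\eta_{t}\left\Vert \nabla F(\bx_{t})\right\Vert ^{2}$. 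This gives
\[
c\,\eta_{t}\left\Vert \nabla F(\bx_{t})\right\Vert ^{2}\leq F(\bx_{t})-F(\bx_{t+1})+\tfrac{3L}{2}\eta_{t}^{2}\left\Vert \bxi_{t}\right\Vert ^{2}-\eta_{t}\left\langle \bxi_{t},\nabla F(\bx_{t})\right\rangle
\]
for an absolute constant $c>0$. Summing over $t\in[T]$, using $F(\bx_{T+1})\geq F_{\star}$, and bounding the martingale sum by its absolute value, we get
\[
\sum_{t=1}^{T}\eta_{t}\left\Vert \nabla F(\bx_{t})\right\Vert ^{2}\lesssim\Delta+L\sum_{t=1}^{T}\eta_{t}^{2}\left\Vert \bxi_{t}\right\Vert ^{2}+\left|\sum_{t=1}^{T}\eta_{t}\left\langle \bxi_{t},\nabla F(\bx_{t})\right\rangle \right|.
\]

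Second, we raise both sides to the power $\frac{\p}{2}$ and apply the subadditivity $(a+b)^{\frac{\p}{2}}\leq a^{\frac{\p}{2}}+b^{\frac{\p}{2}}$ term by term, including inside the noise sum. Third, we take expectations. The noise term becomes $L^{\frac{\p}{2}}\sum_{t}\eta_{t}^{\p}\E\left\Vert \bxi_{t}\right\Vert ^{\p}\leq L^{\frac{\p}{2}}\sigma^{\p}\sum_{t}\eta_{t}^{\p}$. For the martingale term we apply Lemma \ref{lem:core} with $\by_{t}=\eta_{t}\nabla F(\bx_{t})\in\F_{t-1}$, which bounds it by $2^{1-\frac{\p}{2}}\sigma^{\frac{\p}{2}}\sqrt{\sum_{t}\E\left[\eta_{t}^{\p}\left\Vert \nabla F(\bx_{t})\right\Vert ^{\p}\right]}$.

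The main step is closing this self-bounding relation. Writing $\eta_{t}^{\p}\left\Vert \nabla F(\bx_{t})\right\Vert ^{\p}=\eta_{t}^{\frac{\p}{2}}\left(\eta_{t}\left\Vert \nabla F(\bx_{t})\right\Vert ^{2}\right)^{\frac{\p}{2}}$, Hölder's inequality with exponents $\frac{2}{2-\p}$ and $\frac{2}{\p}$ gives, pathwise,
\[
\sum_{t}\eta_{t}^{\p}\left\Vert \nabla F(\bx_{t})\right\Vert ^{\p}\leq\Bigl(\sum_{t}\eta_{t}^{\frac{\p}{2-\p}}\Bigr)^{\frac{2-\p}{2}}\Bigl(\sum_{t}\eta_{t}\left\Vert \nabla F(\bx_{t})\right\Vert ^{2}\Bigr)^{\frac{\p}{2}}.
\]
Let $A\defeq\E\bigl[(\sum_{t}\eta_{t}\left\Vert \nabla F(\bx_{t})\right\Vert ^{2})^{\frac{\p}{2}}\bigr]$ and $S\defeq(\sum_{t}\eta_{t}^{\frac{\p}{2-\p}})^{\frac{2-\p}{2}}$; for $\p=2$ read $S$ as $\max_{t}\eta_{t}$. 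Then the martingale term is at most $\sigma^{\frac{\p}{2}}\sqrt{S\,A}\leq\frac{A}{2}+O(\sigma^{\p}S)$ by AM-GM. This yields $A\leq\frac{A}{2}+O\bigl(\Delta^{\frac{\p}{2}}+\sigma^{\p}S+L^{\frac{\p}{2}}\sigma^{\p}\sum_{t}\eta_{t}^{\p}\bigr)$.

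To rearrange we need $A<\infty$. This holds because each $\bx_{t}$ has $\E\left\Vert \bx_{t}-\bx_{1}\right\Vert ^{\p}<\infty$ inductively. If needed, we can instead use the upper-smoothness bound $\left\Vert \nabla F(\bx)\right\Vert ^{2}\leq2L(F(\bx)-F_{\star})$ together with the one-step inequalities to ensure finiteness. Failing that, we can run the argument with a truncated time horizon, or note that the pathwise bound already controls $A$ by finite quantities. Rearranging then gives the claimed bound. Plugging in the four stepsize choices and evaluating $S$ and $\sum_{t}\eta_{t}^{\p}$ is a routine computation: the $\frac{\p}{2-\p}$-power sums produce the $\log$ factors, and the condition $\p>\frac{4}{3}$ is what makes $\sum_{t}t^{-\frac{\p}{2}}$-type terms behave as stated. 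Dividing by $(\sum_{t}\eta_{t})^{\frac{\p}{2}}$ recovers Theorem \ref{thm:main-SGD-exp}.
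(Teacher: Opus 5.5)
Your proposal is correct and follows the paper's proof essentially step for step: sum the descent inequality, raise it to the power $\frac{\p}{2}$, apply Lemma \ref{lem:core}, use H\"{o}lder with exponents $\frac{2}{2-\p}$ and $\frac{2}{\p}$, then apply AM-GM and rearrange. There are two differences, neither of which matters: you remove the cross term by a Young split, so your martingale weight is $\eta_{t}$ rather than the paper's $\eta_{t}^{2}L-\eta_{t}$ (handled there via $\left|\eta_{t}L-1\right|\leq1$), and you explicitly check $A<\infty$ before rearranging, which the paper leaves implicit and which your second route (using $\left\Vert \nabla F(\bx)\right\Vert ^{2}\leq2L(F(\bx)-F_{\star})$ and inducting on $\E[(F(\bx_{t})-F_{\star})^{\frac{\p}{2}}]$) does settle.

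Two small slips occur in the final corollary step. First, the $\log$ factors in Theorem \ref{thm:main-SGD-exp} come from $\sum_{t}\eta_{t}^{\p}$ (e.g., $\sum_{t}t^{-1}$), not from the $\frac{\p}{2-\p}$-power sums. Second, to reach the uniform average, lower bound $\sum_{t}\eta_{t}\left\Vert \nabla F(\bx_{t})\right\Vert ^{2}\geq\eta_{T}\sum_{t}\left\Vert \nabla F(\bx_{t})\right\Vert ^{2}$ and divide by $(T\eta_{T})^{\frac{\p}{2}}$, rather than by $(\sum_{t}\eta_{t})^{\frac{\p}{2}}$.
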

\begin{proof}
In the following, we write $\Delta\defeq F(\bx_{1})-F_{\star}$. By
Lemma \ref{lem:SGD-core}, we have
\begin{align*}
\sum_{t=1}^{T}\eta_{t}\left\Vert \nabla F(\bx_{t})\right\Vert ^{2} & \leq2\Delta+L\sum_{t=1}^{T}\eta_{t}^{2}\left\Vert \bxi_{t}\right\Vert ^{2}+2\sum_{t=1}^{T}\left(\eta_{t}^{2}L-\eta_{t}\right)\left\langle \bxi_{t},\nabla F(\bx_{t})\right\rangle \\
 & \leq2\Delta+L\sum_{t=1}^{T}\eta_{t}^{2}\left\Vert \bxi_{t}\right\Vert ^{2}+2\left|\sum_{t=1}^{T}\left(\eta_{t}^{2}L-\eta_{t}\right)\left\langle \bxi_{t},\nabla F(\bx_{t})\right\rangle \right|,
\end{align*}
which further implies that
\begin{align}
 & \left(\sum_{t=1}^{T}\eta_{t}\left\Vert \nabla F(\bx_{t})\right\Vert ^{2}\right)^{\frac{\p}{2}}\leq\left(2\Delta+L\sum_{t=1}^{T}\eta_{t}^{2}\left\Vert \bxi_{t}\right\Vert ^{2}+2\left|\sum_{t=1}^{T}\left(\eta_{t}^{2}L-\eta_{t}\right)\left\langle \bxi_{t},\nabla F(\bx_{t})\right\rangle \right|\right)^{\frac{\p}{2}}\nonumber \\
\leq & \left(2\Delta\right)^{\frac{\p}{2}}+L^{\frac{\p}{2}}\sum_{t=1}^{T}\eta_{t}^{\p}\left\Vert \bxi_{t}\right\Vert ^{\p}+2^{\frac{\p}{2}}\left|\sum_{t=1}^{T}\left(\eta_{t}^{2}L-\eta_{t}\right)\left\langle \bxi_{t},\nabla F(\bx_{t})\right\rangle \right|^{\frac{\p}{2}},\label{eq:SGD-exp-1}
\end{align}
where the second step is by repeatedly using $(a+b)^{\frac{\p}{2}}\leq a^{\frac{\p}{2}}+b^{\frac{\p}{2}},\forall a,b\geq0$
when $\p\in\left[0,2\right]$.

Next, we take expectations on both sides of (\ref{eq:SGD-exp-1})
to obtain
\begin{align}
 & \E\left[\left(\sum_{t=1}^{T}\eta_{t}\left\Vert \nabla F(\bx_{t})\right\Vert ^{2}\right)^{\frac{\p}{2}}\right]\nonumber \\
\leq & \left(2\Delta\right)^{\frac{\p}{2}}+L^{\frac{\p}{2}}\sigma^{\p}\sum_{t=1}^{T}\eta_{t}^{\p}+2^{\frac{\p}{2}}\E\left[\left|\sum_{t=1}^{T}\left(\eta_{t}^{2}L-\eta_{t}\right)\left\langle \bxi_{t},\nabla F(\bx_{t})\right\rangle \right|^{\frac{\p}{2}}\right]\nonumber \\
\overset{(a)}{\leq} & \left(2\Delta\right)^{\frac{\p}{2}}+L^{\frac{\p}{2}}\sigma^{\p}\sum_{t=1}^{T}\eta_{t}^{\p}+2^{\frac{\p}{2}}\sqrt{\E\left[\sum_{t=1}^{T}\left|\eta_{t}^{2}L-\eta_{t}\right|^{\p}\left\Vert \nabla F(\bx_{t})\right\Vert ^{\p}\right]}\nonumber \\
\overset{(b)}{\leq} & \left(2\Delta\right)^{\frac{\p}{2}}+L^{\frac{\p}{2}}\sigma^{\p}\sum_{t=1}^{T}\eta_{t}^{\p}+2\sigma^{\frac{\p}{2}}\sqrt{\left(\sum_{t=1}^{T}\eta_{t}^{\frac{\p}{2-\p}}\left|\eta_{t}L-1\right|^{\frac{2\p}{2-\p}}\right)^{\frac{2-\p}{2}}\E\left[\left(\sum_{t=1}^{T}\eta_{t}\left\Vert \nabla F(\bx_{t})\right\Vert ^{2}\right)^{\frac{\p}{2}}\right]}\nonumber \\
\overset{(c)}{\leq} & \left(2\Delta\right)^{\frac{\p}{2}}+L^{\frac{\p}{2}}\sigma^{\p}\sum_{t=1}^{T}\eta_{t}^{\p}+2\sigma^{\p}\left(\sum_{t=1}^{T}\eta_{t}^{\frac{\p}{2-\p}}\left|\eta_{t}L-1\right|^{\frac{2\p}{2-\p}}\right)^{\frac{2-\p}{2}}+\frac{\E\left[\left(\sum_{t=1}^{T}\eta_{t}\left\Vert \nabla F(\bx_{t})\right\Vert ^{2}\right)^{\frac{\p}{2}}\right]}{2},\label{eq:SGD-exp-2}
\end{align}
where $(a)$ is by applying Lemma \ref{lem:core} with $\by_{t}=\left(\eta_{t}^{2}L-\eta_{t}\right)\nabla F(\bx_{t})$,
$(b)$ is due to Cauchy-Schwarz inequality as follows
\begin{align}
\sum_{t=1}^{T}\left|\eta_{t}^{2}L-\eta_{t}\right|^{\p}\left\Vert \nabla F(\bx_{t})\right\Vert ^{\p} & =\sum_{t=1}^{T}\eta_{t}^{\frac{\p}{2}}\left|\eta_{t}L-1\right|^{\p}\cdot\eta_{t}^{\frac{\p}{2}}\left\Vert \nabla F(\bx_{t})\right\Vert ^{\p}\nonumber \\
 & \leq\left(\sum_{t=1}^{T}\eta_{t}^{\frac{\p}{2-\p}}\left|\eta_{t}L-1\right|^{\frac{2\p}{2-\p}}\right)^{\frac{2-\p}{2}}\left(\sum_{t=1}^{T}\eta_{t}\left\Vert \nabla F(\bx_{t})\right\Vert ^{2}\right)^{\frac{\p}{2}},\label{eq:SGD-exp-3}
\end{align}
and $(c)$ follows from AM-GM inequality.

Finally, we rearrange the terms in (\ref{eq:SGD-exp-2}) and use $\left|\eta_{t}L-1\right|\leq1$
to conclude.
\end{proof}

\begin{thm}[Full statement of Theorem \ref{thm:main-SGD-exp-smo+lip}]
\label{thm:SGD-exp-smo+lip}Under Assumptions \ref{assu:oracle}
(with $h=F$ and $\p\in\left(1,2\right)$) and \ref{assu:SGD}, and
an additional condition $\left\Vert \nabla F(\bx)\right\Vert \leq G,\forall\bx\in\R^{d}$,
for any $T\in\N$ and $\left\{ \eta_{t}\right\} _{t=1}^{T}$, $\SGD$
(Algorithm \ref{alg:SGD}) guarantees that
\[
\E\left[\sum_{t=1}^{T}\eta_{t}\left\Vert \nabla F(\bx_{t})\right\Vert ^{2}\right]\lesssim F(\bx_{1})-F_{\star}+G^{2}L^{\frac{2\p-2}{2-\p}}\sum_{t=1}^{T}\eta_{t}^{\frac{\p}{2-\p}}+G^{2-\p}L^{\p-1}\sigma^{\p}\sum_{t=1}^{T}\eta_{t}^{\p}.
\]
In particular, by plugging in different choices of $\left\{ \eta_{t}\right\} _{t=1}^{T}$,
we recover Theorem \ref{thm:main-SGD-exp-smo+lip} when $\p\in\left(1,2\right)$.
The rates in Theorem \ref{thm:main-SGD-exp-smo+lip} when $\p=2$
hold due to the existing literature, e.g., \citep{lan2020first}.
\end{thm}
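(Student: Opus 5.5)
The plan is to avoid Lemma \ref{lem:core} and the power-$\frac{\p}{2}$ trick entirely. When $\left\Vert \nabla F\right\Vert \leq G$, the second-order term of the descent inequality can be replaced by a $\p$-th power of the step length. Then all stochastic terms are integrable, and one can take plain expectations step by step, as in the classical analysis.

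\textbf{Step 1 (interpolated descent bound).} First, $\left\Vert \nabla F\right\Vert \leq G$ makes $F$ $G$-Lipschitz, so for any $\bx,\by$ we have $\Breg_{F}(\bx,\by)=F(\bx)-F(\by)-\left\langle \nabla F(\by),\bx-\by\right\rangle \leq2G\left\Vert \bx-\by\right\Vert $. Combining this with $L$-upper smoothness and writing $a=\left\Vert \bx-\by\right\Vert $ gives
\[
\Breg_{F}(\bx,\by)\leq\min\left\{ \frac{L}{2}a^{2},2Ga\right\} \leq\left(\frac{L}{2}\right)^{\p-1}(2G)^{2-\p}a^{\p},
\]
since $\min\{A,B\}\leq A^{\theta}B^{1-\theta}$ with $\theta=\p-1\in(0,1)$. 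Apply this with $\bx=\bx_{t+1}$, $\by=\bx_{t}$ and $a=\eta_{t}\left\Vert \bg_{t}\right\Vert $, then use $\left\Vert \bg_{t}\right\Vert ^{\p}\leq2^{\p-1}(\left\Vert \nabla F(\bx_{t})\right\Vert ^{\p}+\left\Vert \bxi_{t}\right\Vert ^{\p})$. This yields
\[
F(\bx_{t+1})\leq F(\bx_{t})-\eta_{t}\left\Vert \nabla F(\bx_{t})\right\Vert ^{2}-\eta_{t}\left\langle \nabla F(\bx_{t}),\bxi_{t}\right\rangle +C\,G^{2-\p}L^{\p-1}\eta_{t}^{\p}\left(\left\Vert \nabla F(\bx_{t})\right\Vert ^{\p}+\left\Vert \bxi_{t}\right\Vert ^{\p}\right).
\]
Nothing here requires $\eta_{t}\leq\frac{1}{L}$, which is why the statement allows arbitrary stepsizes.

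\textbf{Step 2 (absorb the gradient term).} Apply Young's inequality with conjugate exponents $\frac{2}{\p}$ and $\frac{2}{2-\p}$ to the product $\left(\eta_{t}^{\frac{\p}{2}}\left\Vert \nabla F(\bx_{t})\right\Vert ^{\p}\right)\cdot\left(cG^{2-\p}L^{\p-1}\eta_{t}^{\frac{\p}{2}}\right)$. This gives
\[
cG^{2-\p}L^{\p-1}\eta_{t}^{\p}\left\Vert \nabla F(\bx_{t})\right\Vert ^{\p}\leq\frac{\eta_{t}}{2}\left\Vert \nabla F(\bx_{t})\right\Vert ^{2}+C'G^{2}L^{\frac{2\p-2}{2-\p}}\eta_{t}^{\frac{\p}{2-\p}},
\]
because $(G^{2-\p}L^{\p-1})^{\frac{2}{2-\p}}=G^{2}L^{\frac{2\p-2}{2-\p}}$ and $(\eta_{t}^{\frac{\p}{2}})^{\frac{2}{2-\p}}=\eta_{t}^{\frac{\p}{2-\p}}$. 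The restriction $\p<2$ is used here. After absorption, each step has a descent of $\frac{\eta_{t}}{2}\left\Vert \nabla F(\bx_{t})\right\Vert ^{2}$.

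\textbf{Step 3 (expectations and telescoping).} The cross term satisfies $\E_{t-1}[\left\langle \nabla F(\bx_{t}),\bxi_{t}\right\rangle ]=0$. This is legitimate because $\nabla F(\bx_{t})\in\F_{t-1}$ is bounded by $G$ and $\E_{t-1}\left\Vert \bxi_{t}\right\Vert \leq\sigma$ by Jensen, so the term is integrable. Also $\E\left[\left\Vert \bxi_{t}\right\Vert ^{\p}\right]\leq\sigma^{\p}$. Summing from $t=1$ to $T$ and using $F(\bx_{T+1})\geq F_{\star}$ gives
\[
\frac{1}{2}\E\left[\sum_{t=1}^{T}\eta_{t}\left\Vert \nabla F(\bx_{t})\right\Vert ^{2}\right]\lesssim\Delta+G^{2}L^{\frac{2\p-2}{2-\p}}\sum_{t=1}^{T}\eta_{t}^{\frac{\p}{2-\p}}+G^{2-\p}L^{\p-1}\sigma^{\p}\sum_{t=1}^{T}\eta_{t}^{\p},
\]
which is the claim. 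To be rigorous about finiteness, note that $F(\bx_{t})$ is integrable by induction, since $F$ is $G$-Lipschitz and $\E\left\Vert \bg_{t}\right\Vert <\infty$.

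\textbf{Main obstacle.} The one genuinely new idea is Step 1: replacing the $\left\Vert \bg_{t}\right\Vert ^{2}$ term, which may have infinite expectation, by a $\p$-th power through the Lipschitz/smoothness interpolation. Once that bound is in place, Steps 2 and 3 are routine. The corollary rates in Theorem \ref{thm:main-SGD-exp-smo+lip} then follow by dividing by $\sum_{t}\eta_{t}$ and substituting the stepsizes.
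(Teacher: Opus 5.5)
Your proposal is correct and follows essentially the same route as the paper: a H\"{o}lder-type descent inequality with exponent $\p$ obtained from $G$-Lipschitzness plus $L$-upper smoothness (Lemma \ref{lem:general-holder} with $\nu=\p$), the split $\left\Vert \bg_{t}\right\Vert ^{\p}\leq2^{\p-1}(\left\Vert \nabla F(\bx_{t})\right\Vert ^{\p}+\left\Vert \bxi_{t}\right\Vert ^{\p})$, Young's inequality with exponents $\frac{2}{\p}$ and $\frac{2}{2-\p}$ to absorb the gradient term (Lemma \ref{lem:SGD-core-smo+lip}), and then plain expectations. The differences are cosmetic. You obtain the interpolated bound directly from $\min\{A,B\}\leq A^{\p-1}B^{2-\p}$ applied to $\Breg_{F}$, whereas the paper interpolates the gradient inner product along the segment and integrates, giving the prefactor $\frac{1}{\p}$ instead of your $2^{1-\p}$ in front of $(2G)^{2-\p}L^{\p-1}$. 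You also take conditional expectations step by step, with an integrability check on $F(\bx_{t})$, where the paper sums first and takes a single expectation.
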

\begin{proof}
In the following, we write $\Delta\defeq F(\bx_{1})-F_{\star}$. By
Lemma \ref{lem:SGD-core-smo+lip} (with $\nu=\p$), we have
\begin{align*}
\sum_{t=1}^{T}\eta_{t}\left\Vert \nabla F(\bx_{t})\right\Vert ^{2}\leq & 2\Delta+\frac{(2-\p)2^{\frac{2}{2-\p}}G^{2}L^{\frac{2\p-2}{2-\p}}}{\p}\sum_{t=1}^{T}\eta_{t}^{\frac{\p}{2-\p}}\\
 & +\frac{4G^{2-\p}L^{\p-1}}{\p}\sum_{t=1}^{T}\eta_{t}^{\p}\left\Vert \bxi_{t}\right\Vert ^{\p}-2\sum_{t=1}^{T}\eta_{t}\left\langle \bxi_{t},\nabla F(\bx_{t})\right\rangle .
\end{align*}
Next, we take expectations on both sides of the above inequality to
obtain
\[
\E\left[\sum_{t=1}^{T}\eta_{t}\left\Vert \nabla F(\bx_{t})\right\Vert ^{2}\right]\leq2\Delta+\frac{(2-\p)2^{\frac{2}{2-\p}}G^{2}L^{\frac{2\p-2}{2-\p}}}{\p}\sum_{t=1}^{T}\eta_{t}^{\frac{\p}{2-\p}}+\frac{4G^{2-\p}L^{\p-1}\sigma^{\p}}{\p}\sum_{t=1}^{T}\eta_{t}^{\p}.
\]
\end{proof}

\subsection{Full Theorems for $\protect\SGDM$ and Proofs}
\begin{thm}[Full statement of Theorem \ref{thm:main-SGDM-exp}]
\label{thm:SGDM-exp}Under Assumptions \ref{assu:oracle} (with $h=F$)
and \ref{assu:SGDM}, for any $T\in\N$ and nonincreasing $\left\{ \eta_{t}\leq\frac{(1-\beta)^{2}}{2L}\right\} _{t=1}^{T}$,
$\SGDM$ (Algorithm \ref{alg:SGDM}) guarantees that
\[
\E\left[\left(\sum_{t=1}^{T}\eta_{t}\left\Vert \nabla F(\bx_{t})\right\Vert ^{2}\right)^{\frac{\p}{2}}\right]\lesssim\left((1-\beta)(F(\bx_{1})-F_{\star})\right)^{\frac{\p}{2}}+\sigma^{\p}\left(\sum_{t=1}^{T}\eta_{t}^{\frac{\p}{2-\p}}\right)^{\frac{2-\p}{2}}+\frac{L^{\frac{\p}{2}}\sigma^{\p}}{1-\beta}\sum_{t=1}^{T}\eta_{t}^{\p}.
\]
In particular, by plugging in different choices of $\left\{ \eta_{t}\right\} _{t=1}^{T}$
and $\beta$, we recover Theorem \ref{thm:main-SGDM-exp}.
\end{thm}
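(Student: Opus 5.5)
The proof will reuse the three-step template from the proof of Theorem \ref{thm:SGD-exp}; the only new ingredient is a momentum-adapted summed descent inequality (an analogue of Lemma \ref{lem:SGD-core}). I will work with the standard auxiliary sequence for heavy-ball, $\bz_{t}\defeq\bx_{t}+\frac{\beta}{1-\beta}(\bx_{t}-\bx_{t-1})$. It satisfies $\bz_{t+1}=\bz_{t}-\frac{\eta_{t}}{1-\beta}\bg_{t}$ when $\eta_{t}$ is constant. For nonincreasing $\eta_{t}$ there is an extra correction term involving $(\eta_{t-1}-\eta_{t})$; I will absorb it using monotonicity, or else work with $\bx_{t+1}-\bx_{t}=-\sum_{s\le t}\beta^{t-s}\eta_{s}\bg_{s}$ directly.

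Applying $L$-smoothness of $F$ along $\bz_{t}$ gives
\[
F(\bz_{t+1})\le F(\bz_{t})-\tfrac{\eta_{t}}{1-\beta}\langle\nabla F(\bz_{t}),\bg_{t}\rangle+\tfrac{L\eta_{t}^{2}}{2(1-\beta)^{2}}\Vert\bg_{t}\Vert^{2}.
\]
I then write $\nabla F(\bz_{t})=\nabla F(\bx_{t})+(\nabla F(\bz_{t})-\nabla F(\bx_{t}))$ and $\bg_{t}=\nabla F(\bx_{t})+\bxi_{t}$. The cross term $\langle\nabla F(\bz_{t})-\nabla F(\bx_{t}),\nabla F(\bx_{t})\rangle$ is bounded by $L\frac{\beta}{1-\beta}\Vert\bx_{t}-\bx_{t-1}\Vert\Vert\nabla F(\bx_{t})\Vert$, and Young's inequality turns it into $\frac{1}{4}\Vert\nabla F(\bx_{t})\Vert^{2}$ plus $L^{2}\Vert\bx_{t}-\bx_{t-1}\Vert^{2}$ terms. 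Since $\bx_{t}-\bx_{t-1}$ is a geometric sum of past $\eta_{s}\bg_{s}$, summing over $t$ gives $\sum_{t}\Vert\bx_{t}-\bx_{t-1}\Vert^{2}\lesssim\frac{1}{(1-\beta)^{2}}\sum_{t}\eta_{t}^{2}\Vert\bg_{t}\Vert^{2}$. Each $\Vert\bg_{t}\Vert^{2}$ splits as $2\Vert\nabla F(\bx_{t})\Vert^{2}+2\Vert\bxi_{t}\Vert^{2}$. With $\eta_{t}\le\frac{(1-\beta)^{2}}{2L}$, every $\Vert\nabla F(\bx_{t})\Vert^{2}$ term is absorbed into the left side.

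The target summed inequality is, pathwise,
\[
\sum_{t}\eta_{t}\Vert\nabla F(\bx_{t})\Vert^{2}\lesssim(1-\beta)\Delta+\tfrac{L}{1-\beta}\sum_{t}\eta_{t}^{2}\Vert\bxi_{t}\Vert^{2}+\Big|\sum_{t}\langle\bxi_{t},\by_{t}\rangle\Big|,
\]
where $\by_{t}\in\F_{t-1}$ collects every coefficient of $\bxi_{t}$ that is linear in the noise. These are $-\eta_{t}\nabla F(\bz_{t})$ and the smoothness cross terms $\eta_{t}^{2}L\nabla F(\bx_{t})/(1-\beta)$. The right measurability is the main obstacle. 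Any term like $\langle\bxi_{t},\bx_{t+1}-\bx_{t}\rangle$ is not a martingale difference, so I must bound it by $\Vert\bxi_{t}\Vert^{2}$ terms via Young's inequality instead. Also, $\nabla F(\bz_{t})\in\F_{t-1}$ but it is not $\nabla F(\bx_{t})$. I will write $\by_{t}=-\eta_{t}\nabla F(\bx_{t})+\eta_{t}(\nabla F(\bx_{t})-\nabla F(\bz_{t}))$ and control the second piece by $L\eta_{t}\frac{\beta}{1-\beta}\Vert\bx_{t}-\bx_{t-1}\Vert$.

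Next I raise to the power $\frac{\p}{2}$ using subadditivity and take expectations. The $\Vert\bxi_{t}\Vert^{\p}$ terms give $\frac{L^{\p/2}\sigma^{\p}}{(1-\beta)^{\p/2}}\sum\eta_{t}^{\p}$, which is at most the stated $\frac{L^{\p/2}\sigma^{\p}}{1-\beta}\sum\eta_{t}^{\p}$. Lemma \ref{lem:core} bounds the martingale term by $\sigma^{\p/2}\sqrt{\sum\E\Vert\by_{t}\Vert^{\p}}$. The $\nabla F(\bx_{t})$ part is handled exactly as in (\ref{eq:SGD-exp-3}): Hölder gives $\big(\sum\eta_{t}^{\frac{\p}{2-\p}}\big)^{\frac{2-\p}{2}}$ times the target quantity, and AM-GM lets it be self-absorbed. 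The $\Vert\bx_{t}-\bx_{t-1}\Vert^{\p}$ part is bounded via the geometric sum, as $\Vert\bx_{t}-\bx_{t-1}\Vert^{\p}\lesssim(1-\beta)^{-(\p-1)}\sum_{s<t}\beta^{t-1-s}\eta_{s}^{\p}\Vert\bg_{s}\Vert^{\p}$. This again produces a $\Vert\nabla F\Vert^{\p}$ part, which is absorbed the same way using the stepsize bound and monotonicity of $\eta_{t}$, and a $\sigma^{\p}\sum\eta_{t}^{\p}$ part, which AM-GM converts into the last term of the bound. I expect the bookkeeping of $(1-\beta)$ powers in this step to be the delicate part; I would first check it with $\eta_{t}$ constant before handling the nonincreasing case.
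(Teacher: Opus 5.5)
There is a genuine gap: the way you handle the accumulated momentum noise cannot give the stated $(1-\beta)$-dependence. The rest of your architecture matches the paper: the same $\boldsymbol{z}_t$, smoothness along $\boldsymbol{z}_t$, and Lemma~\ref{lem:core} with H\"older/AM-GM self-bounding.

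Carry out your own steps: Young on the cross term, $\Vert\nabla F(\boldsymbol{z}_t)-\nabla F(\boldsymbol{x}_t)\Vert\le\frac{L\beta}{1-\beta}\Vert\boldsymbol{x}_t-\boldsymbol{x}_{t-1}\Vert$, Cauchy--Schwarz on the geometric sum, monotonicity, and $L\eta_t\le\frac{(1-\beta)^2}{2}$. The gradient-difference term then contributes $\frac{L\beta^2}{(1-\beta)^2}\sum_t\eta_t^2\Vert\boldsymbol{\xi}_t\Vert^2$ pathwise, not the $\frac{L}{1-\beta}$ of your displayed target. After raising to $\frac{p}{2}$ and taking expectations, this gives $\frac{L^{p/2}\sigma^p}{(1-\beta)^p}\sum_t\eta_t^p$, which is a factor $(1-\beta)^{1-p}$ worse than the theorem. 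For $\eta_t\equiv\frac{(1-\beta)^2}{2L}$ it exceeds the sum of the theorem's two $\sigma$-terms by a factor $\gtrsim\min\{(1-\beta)^{1-p},T^{p/2}\}$.

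Your martingale step has the same defect. The Jensen bound $\Vert\boldsymbol{x}_t-\boldsymbol{x}_{t-1}\Vert^p\lesssim(1-\beta)^{1-p}\sum_{s<t}\beta^{t-1-s}\eta_s^p\Vert\boldsymbol{g}_s\Vert^p$ leaves, after AM-GM, a term like $\sigma^p(1-\beta)^{1-p}\max_t\eta_t^{p/2}$. The theorem's right-hand side does not control this uniformly in $\beta$. More careful pathwise bookkeeping will not help: with all $\boldsymbol{\xi}_s$ equal, $\sum_t\Vert\sum_{s<t}\beta^{t-1-s}\eta_s\boldsymbol{\xi}_s\Vert^2$ really is of order $(1-\beta)^{-2}\sum_s\eta_s^2\Vert\boldsymbol{\xi}_s\Vert^2$. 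The better constant exists only in expectation, through martingale cancellation.

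The missing ingredient is a vector von Bahr--Esseen step. Split $\boldsymbol{x}_t-\boldsymbol{x}_{t-1}=-\sum_{s<t}\beta^{t-1-s}\eta_s\boldsymbol{g}_s$ into its gradient part and its noise part. Cauchy--Schwarz the gradient part and absorb it, as you do. Keep the noise part as the single vector $\sum_{s<t}\beta^{t-1-s}\gamma_s\boldsymbol{\xi}_s$, with $\gamma_s=\frac{\eta_s}{1-\beta}$, until after raising to $\frac{p}{2}$. Then apply the von Bahr--Esseen inequality to this vector-valued martingale:
\[
\mathbb{E}\Big[\Big\Vert\sum_{s<t}\beta^{t-1-s}\gamma_s\boldsymbol{\xi}_s\Big\Vert^{p}\Big]\le2^{2-p}\sigma^{p}\sum_{s<t}\beta^{p(t-1-s)}\gamma_s^{p}.
\]
Summing over $t$ now costs only $\frac{1}{1-\beta^p}\le\frac{1}{1-\beta}$. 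Combined with the prefactor $2\beta^2(1-\beta)L$ (from $\gamma_t\le\frac{1-\beta}{2L}$), this yields the theorem's last term.

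The paper also avoids decomposing the martingale coefficient. It writes
\[
-\langle\nabla F(\boldsymbol{z}_t),\nabla F(\boldsymbol{x}_t)\rangle=\tfrac12\Vert\nabla F(\boldsymbol{z}_t)-\nabla F(\boldsymbol{x}_t)\Vert^2-\tfrac12\Vert\nabla F(\boldsymbol{z}_t)\Vert^2-\tfrac12\Vert\nabla F(\boldsymbol{x}_t)\Vert^2,
\]
which keeps $\sum_t\gamma_t\Vert\nabla F(\boldsymbol{z}_t)\Vert^2$ on the left. The martingale with $\boldsymbol{y}_t=\gamma_t\nabla F(\boldsymbol{z}_t)$ is then self-bounded against that term, and $\mathbb{E}\Vert\boldsymbol{x}_t-\boldsymbol{x}_{t-1}\Vert^p$ never appears. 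Your decomposition would also work once von Bahr--Esseen replaces Jensen there, but it is messier.

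Two smaller points:
\begin{itemize}
\item $\boldsymbol{z}_{t+1}=\boldsymbol{z}_t-\gamma_t\boldsymbol{g}_t$ holds exactly for any stepsizes in Algorithm~\ref{alg:SGDM}, so there is no $(\eta_{t-1}-\eta_t)$ correction. Monotonicity is used only to swap $\sum_t\gamma_t\sum_{s<t}$ into $\sum_s\gamma_s\sum_{t>s}$.
\item No non-martingale term like $\langle\boldsymbol{\xi}_t,\boldsymbol{x}_{t+1}-\boldsymbol{x}_t\rangle$ arises, because both noise coefficients $\nabla F(\boldsymbol{x}_t)$ and $\nabla F(\boldsymbol{z}_t)$ are $\mathcal{F}_{t-1}$-measurable.
\end{itemize}
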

\begin{rem}
In both Theorem \ref{thm:SGDM-exp} here and Lemma \ref{lem:SGDM-core}
below, it is possible to relax the condition on the stepsize to $\left\{ \eta_{t}\leq\frac{(1-\beta)(1-\beta^{2})}{2L(1+\beta^{2})}\right\} _{t=1}^{T}$.
However, we keep the current one due to its simple form.
\end{rem}
\begin{proof}
In the following, we write $\Delta\defeq F(\bx_{1})-F_{\star}$ and
recall from Lemma \ref{lem:SGDM-core} that $\left\{ \bz_{t}\right\} _{t=1}^{T}$
is a sequence defined as 
\[
\bz_{t}\defeq\frac{\bx_{t}-\beta\bx_{t-1}}{1-\beta},\forall t\in\left[T+1\right],
\]
and $\left\{ \gamma_{t}\right\} _{t=1}^{T}$ is a sequence defined
as
\begin{equation}
\gamma_{t}\defeq\frac{\eta_{t}}{1-\beta},\forall t\in\left[T\right].\label{eq:SGDM-exp-gamma}
\end{equation}
By Lemma \ref{lem:SGDM-core}, we have
\begin{align*}
 & \sum_{t=1}^{T}\gamma_{t}\left\Vert \nabla F(\bx_{t})\right\Vert ^{2}+2\sum_{t=1}^{T}\gamma_{t}\left\Vert \nabla F(\bz_{t})\right\Vert ^{2}\\
\leq & 4\Delta+2L\sum_{t=1}^{T}\gamma_{t}^{2}\left\Vert \bxi_{t}\right\Vert ^{2}+2\beta^{2}(1-\beta)L\sum_{t=1}^{T}\left\Vert \sum_{s=1}^{t-1}\beta^{t-1-s}\gamma_{s}\bxi_{s}\right\Vert ^{2}\\
 & +4L\sum_{t=1}^{T}\gamma_{t}^{2}\left\langle \bxi_{t},\nabla F(\bx_{t})\right\rangle -4\sum_{t=1}^{T}\gamma_{t}\left\langle \bxi_{t},\nabla F(\bz_{t})\right\rangle \\
\leq & 4\Delta+2L\sum_{t=1}^{T}\gamma_{t}^{2}\left\Vert \bxi_{t}\right\Vert ^{2}+2\beta^{2}(1-\beta)L\sum_{t=1}^{T}\left\Vert \sum_{s=1}^{t-1}\beta^{t-1-s}\gamma_{s}\bxi_{s}\right\Vert ^{2}\\
 & +4L\left|\sum_{t=1}^{T}\gamma_{t}^{2}\left\langle \bxi_{t},\nabla F(\bx_{t})\right\rangle \right|+4\left|\sum_{t=1}^{T}\gamma_{t}\left\langle \bxi_{t},\nabla F(\bz_{t})\right\rangle \right|,
\end{align*}
The above inequality further implies that
\begin{align}
 & \frac{\left(\sum_{t=1}^{T}\gamma_{t}\left\Vert \nabla F(\bx_{t})\right\Vert ^{2}\right)^{\frac{\p}{2}}}{2^{1-\frac{\p}{2}}}+\frac{\left(\sum_{t=1}^{T}\gamma_{t}\left\Vert \nabla F(\bz_{t})\right\Vert ^{2}\right)^{\frac{\p}{2}}}{2^{1-\p}}\nonumber \\
\leq & \left(\sum_{t=1}^{T}\gamma_{t}\left\Vert \nabla F(\bx_{t})\right\Vert ^{2}+2\sum_{t=1}^{T}\gamma_{t}\left\Vert \nabla F(\bz_{t})\right\Vert ^{2}\right)^{\frac{\p}{2}}\nonumber \\
\leq & \left[4\Delta+2L\sum_{t=1}^{T}\gamma_{t}^{2}\left\Vert \bxi_{t}\right\Vert ^{2}+2\beta^{2}(1-\beta)L\sum_{t=1}^{T}\left\Vert \sum_{s=1}^{t-1}\beta^{t-1-s}\gamma_{s}\bxi_{s}\right\Vert ^{2}\right.\nonumber \\
 & \left.+4L\left|\sum_{t=1}^{T}\gamma_{t}^{2}\left\langle \bxi_{t},\nabla F(\bx_{t})\right\rangle \right|+4\left|\sum_{t=1}^{T}\gamma_{t}\left\langle \bxi_{t},\nabla F(\bz_{t})\right\rangle \right|\right]^{\frac{\p}{2}}\nonumber \\
\leq & \left(4\Delta\right)^{\frac{\p}{2}}+\left(2L\right)^{\frac{\p}{2}}\sum_{t=1}^{T}\gamma_{t}^{\p}\left\Vert \bxi_{t}\right\Vert ^{\p}+\left(2\beta^{2}(1-\beta)L\right)^{\frac{\p}{2}}\sum_{t=1}^{T}\left\Vert \sum_{s=1}^{t-1}\beta^{t-1-s}\gamma_{s}\bxi_{s}\right\Vert ^{\p}\nonumber \\
 & +\left(4L\right)^{\frac{\p}{2}}\left|\sum_{t=1}^{T}\gamma_{t}^{2}\left\langle \bxi_{t},\nabla F(\bx_{t})\right\rangle \right|^{\frac{\p}{2}}+2^{\p}\left|\sum_{t=1}^{T}\gamma_{t}\left\langle \bxi_{t},\nabla F(\bz_{t})\right\rangle \right|^{\frac{\p}{2}},\label{eq:SGDM-exp-1}
\end{align}
where the second step is by repeatedly using $(a+b)^{\frac{\p}{2}}\leq a^{\frac{\p}{2}}+b^{\frac{\p}{2}},\forall a,b\geq0$
when $\p\in\left[0,2\right]$.

Next, we take expectations on both sides of (\ref{eq:SGDM-exp-1})
to obtain
\begin{align}
 & \E\left[\frac{\left(\sum_{t=1}^{T}\gamma_{t}\left\Vert \nabla F(\bx_{t})\right\Vert ^{2}\right)^{\frac{\p}{2}}}{2^{1-\frac{\p}{2}}}+\frac{\left(\sum_{t=1}^{T}\gamma_{t}\left\Vert \nabla F(\bz_{t})\right\Vert ^{2}\right)^{\frac{\p}{2}}}{2^{1-\p}}\right]\nonumber \\
\leq & \left(4\Delta\right)^{\frac{\p}{2}}+\left(2L\right)^{\frac{\p}{2}}\sigma^{\p}\sum_{t=1}^{T}\gamma_{t}^{\p}+\left(2\beta^{2}(1-\beta)L\right)^{\frac{\p}{2}}\sum_{t=1}^{T}\E\left[\left\Vert \sum_{s=1}^{t-1}\beta^{t-1-s}\gamma_{s}\bxi_{s}\right\Vert ^{\p}\right]\nonumber \\
 & +\left(4L\right)^{\frac{\p}{2}}\E\left[\left|\sum_{t=1}^{T}\gamma_{t}^{2}\left\langle \bxi_{t},\nabla F(\bx_{t})\right\rangle \right|^{\frac{\p}{2}}\right]+2^{\p}\E\left[\left|\sum_{t=1}^{T}\gamma_{t}\left\langle \bxi_{t},\nabla F(\bz_{t})\right\rangle \right|^{\frac{\p}{2}}\right]\nonumber \\
\overset{(a)}{\leq} & \left(4\Delta\right)^{\frac{\p}{2}}+\left(2L\right)^{\frac{\p}{2}}\sigma^{\p}\sum_{t=1}^{T}\gamma_{t}^{\p}+2\left(\beta^{2}(1-\beta)L\right)^{\frac{\p}{2}}\sigma^{\p}\sum_{t=1}^{T}\sum_{s=1}^{t-1}\beta^{\p(t-1-s)}\gamma_{s}^{\p}\nonumber \\
 & +2^{1+\frac{\p}{2}}L^{\frac{\p}{2}}\sigma^{\frac{\p}{2}}\sqrt{\sum_{t=1}^{T}\gamma_{t}^{2\p}\E\left[\left\Vert \nabla F(\bx_{t})\right\Vert ^{\p}\right]}+2^{1+\frac{\p}{2}}\sigma^{\frac{\p}{2}}\sqrt{\sum_{t=1}^{T}\gamma_{t}^{\p}\E\left[\left\Vert \nabla F(\bz_{t})\right\Vert ^{\p}\right]}\nonumber \\
\overset{(b)}{\leq} & \left(4\Delta\right)^{\frac{\p}{2}}+\left[2^{\frac{\p}{2}}+\frac{2\left(\beta^{2}(1-\beta)\right)^{\frac{\p}{2}}}{1-\beta^{\p}}\right]L^{\frac{\p}{2}}\sigma^{\p}\sum_{t=1}^{T}\gamma_{t}^{\p}\nonumber \\
 & +2(1-\beta)^{\frac{\p}{2}}\sigma^{\frac{\p}{2}}\sqrt{\left(\sum_{t=1}^{T}\gamma_{t}^{\frac{\p}{2-\p}}\right)^{\frac{2-\p}{2}}\E\left[\left(\sum_{t=1}^{T}\gamma_{t}\left\Vert \nabla F(\bx_{t})\right\Vert ^{2}\right)^{\frac{\p}{2}}\right]}\nonumber \\
 & +2^{1+\frac{\p}{2}}\sigma^{\frac{\p}{2}}\sqrt{\left(\sum_{t=1}^{T}\gamma_{t}^{\frac{\p}{2-\p}}\right)^{\frac{2-\p}{2}}\E\left[\left(\sum_{t=1}^{T}\gamma_{t}\left\Vert \nabla F(\bz_{t})\right\Vert ^{2}\right)^{\frac{\p}{2}}\right]}\nonumber \\
\overset{(c)}{\leq} & \left(4\Delta\right)^{\frac{\p}{2}}+\left[2^{\frac{\p}{2}}+\frac{2\left(\beta^{2}(1-\beta)\right)^{\frac{\p}{2}}}{1-\beta^{\p}}\right]L^{\frac{\p}{2}}\sigma^{\p}\sum_{t=1}^{T}\gamma_{t}^{\p}+\left[2^{2-\frac{\p}{2}}(1-\beta)^{\p}+2\right]\sigma^{\p}\left(\sum_{t=1}^{T}\gamma_{t}^{\frac{\p}{2-\p}}\right)^{\frac{2-\p}{2}}\nonumber \\
 & +\frac{\E\left[\left(\sum_{t=1}^{T}\gamma_{t}\left\Vert \nabla F(\bx_{t})\right\Vert ^{2}\right)^{\frac{\p}{2}}\right]}{2^{2-\frac{\p}{2}}}+\frac{\E\left[\left(\sum_{t=1}^{T}\gamma_{t}\left\Vert \nabla F(\bz_{t})\right\Vert ^{2}\right)^{\frac{\p}{2}}\right]}{2^{1-\p}},\label{eq:SGDM-exp-2}
\end{align}
where $(a)$ is by applying
\[
\E\left[\left\Vert \sum_{s=1}^{t-1}\beta^{t-1-s}\gamma_{s}\bxi_{s}\right\Vert ^{\p}\right]\leq2^{2-\p}\sum_{s=1}^{t-1}\beta^{\p(t-1-s)}\gamma_{s}^{\p}\E\left[\left\Vert \bxi_{s}\right\Vert ^{\p}\right]\leq2^{2-\p}\sigma^{\p}\sum_{s=1}^{t-1}\beta^{\p(t-1-s)}\gamma_{s}^{\p}
\]
and Lemma \ref{lem:core} with $\by_{t}=\gamma_{t}^{2}\nabla F(\bx_{t})$
and $\by_{t}=\gamma_{t}\nabla F(\bz_{t})$, $(b)$ is due to $\gamma_{t}=\frac{\eta_{t}}{1-\beta}\leq\frac{1-\beta}{2L},\forall t\in\left[T\right]$
by our requirement on $\left\{ \eta_{t}\right\} _{t=1}^{T}$ and Cauchy-Schwarz
inequality similar to (\ref{eq:SGD-exp-3}), and $(c)$ follows from
AM-GM inequality.

Finally, we rearrange the terms in (\ref{eq:SGDM-exp-2}), use $\beta\leq1$
and $\frac{1}{1-\beta^{\p}}\leq\frac{1}{1-\beta}$, and plug in $\gamma_{t}=\frac{\eta_{t}}{1-\beta},\forall t\in\left[T\right]$
to conclude.
\end{proof}

\subsection{Helpful Lemmas}

We prove some helpful lemmas used in the proofs.
\begin{lem}
\label{lem:SGD-core}Under Assumption \ref{assu:SGD}, for any $T\in\N$
and $\left\{ \eta_{t}\leq\frac{1}{L}\right\} _{t=1}^{T}$, $\SGD$
(Algorithm \ref{alg:SGD}) guarantees that
\[
\sum_{t=1}^{T}\eta_{t}\left\Vert \nabla F(\bx_{t})\right\Vert ^{2}\leq2\left(F(\bx_{1})-F_{\star}\right)+L\sum_{t=1}^{T}\eta_{t}^{2}\left\Vert \bxi_{t}\right\Vert ^{2}+2\sum_{t=1}^{T}\left(\eta_{t}^{2}L-\eta_{t}\right)\left\langle \bxi_{t},\nabla F(\bx_{t})\right\rangle .
\]
\end{lem}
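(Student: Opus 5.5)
The plan is to start from the $L$-upper smoothness inequality along one SGD step and then telescope. Since $\bx_{t+1}-\bx_{t}=-\eta_{t}\bg_{t}=-\eta_{t}(\nabla F(\bx_{t})+\bxi_{t})$, Assumption \ref{assu:SGD} gives
\[
F(\bx_{t+1})\leq F(\bx_{t})-\eta_{t}\left\langle \nabla F(\bx_{t}),\nabla F(\bx_{t})+\bxi_{t}\right\rangle +\frac{L\eta_{t}^{2}}{2}\left\Vert \nabla F(\bx_{t})+\bxi_{t}\right\Vert ^{2}.
\]
I would expand the square as $\left\Vert \nabla F(\bx_{t})\right\Vert ^{2}+2\left\langle \bxi_{t},\nabla F(\bx_{t})\right\rangle +\left\Vert \bxi_{t}\right\Vert ^{2}$ and collect terms. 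This yields
\[
F(\bx_{t+1})\leq F(\bx_{t})-\left(\eta_{t}-\frac{L\eta_{t}^{2}}{2}\right)\left\Vert \nabla F(\bx_{t})\right\Vert ^{2}+\frac{L\eta_{t}^{2}}{2}\left\Vert \bxi_{t}\right\Vert ^{2}+\left(L\eta_{t}^{2}-\eta_{t}\right)\left\langle \bxi_{t},\nabla F(\bx_{t})\right\rangle .
\]

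Next I use the stepsize condition. Since $\eta_{t}\leq\frac{1}{L}$, we have $\eta_{t}-\frac{L\eta_{t}^{2}}{2}\geq\frac{\eta_{t}}{2}$, so the coefficient of $\left\Vert \nabla F(\bx_{t})\right\Vert ^{2}$ can be lower bounded by $\frac{\eta_{t}}{2}$. Crucially, this is applied only to the gradient-norm term, and the cross term is kept exactly with coefficient $(L\eta_{t}^{2}-\eta_{t})$. That exact coefficient is what the later application of Lemma \ref{lem:core} needs, so I would not bound the cross term by Young's inequality.

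Finally, I sum from $t=1$ to $T$, telescope $F(\bx_{1})-F(\bx_{T+1})\leq F(\bx_{1})-F_{\star}$ using the lower bound on $F$, and multiply through by $2$. This gives exactly the claimed inequality.

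There is no real obstacle here; the argument is purely deterministic and needs no expectations. The only point requiring care is sign bookkeeping: keep the cross term with the precise coefficient $2(\eta_{t}^{2}L-\eta_{t})$ after multiplying by $2$, rather than bounding it.
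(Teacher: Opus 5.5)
Your proposal is correct and follows essentially the same route as the paper's proof: you apply $L$-upper smoothness to one $\SGD$ step, expand $\left\Vert \bg_{t}\right\Vert ^{2}$, and use $\eta_{t}\leq\frac{1}{L}$ only to lower bound the gradient-norm coefficient by $\frac{\eta_{t}}{2}$ while keeping the cross term exact. Summing over $t$, telescoping with $F(\bx_{T+1})\geq F_{\star}$, and multiplying by $2$ then gives the stated inequality, exactly as in the paper.
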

\begin{proof}
Given $t\in\left[T\right]$, by upper smoothness of $F$ and the update
rule of $\SGD$, we have
\begin{align*}
F(\bx_{t+1}) & \leq F(\bx_{t})+\left\langle \nabla F(\bx_{t}),\bx_{t+1}-\bx_{t}\right\rangle +\frac{L}{2}\left\Vert \bx_{t+1}-\bx_{t}\right\Vert ^{2}\\
 & =F(\bx_{t})-\eta_{t}\left\langle \nabla F(\bx_{t}),\bg_{t}\right\rangle +\frac{\eta_{t}^{2}L}{2}\left\Vert \bg_{t}\right\Vert ^{2}\\
 & =F(\bx_{t})-\left(\eta_{t}-\frac{\eta_{t}^{2}L}{2}\right)\left\Vert \nabla F(\bx_{t})\right\Vert ^{2}+\frac{\eta_{t}^{2}L}{2}\left\Vert \bxi_{t}\right\Vert ^{2}+\left(\eta_{t}^{2}L-\eta_{t}\right)\left\langle \bxi_{t},\nabla F(\bx_{t})\right\rangle \\
 & \overset{\eta_{t}\leq\frac{1}{L}}{\leq}F(\bx_{t})-\frac{\eta_{t}}{2}\left\Vert \nabla F(\bx_{t})\right\Vert ^{2}+\frac{\eta_{t}^{2}L}{2}\left\Vert \bxi_{t}\right\Vert ^{2}+\left(\eta_{t}^{2}L-\eta_{t}\right)\left\langle \bxi_{t},\nabla F(\bx_{t})\right\rangle .
\end{align*}
Finally, we sum the above inequality from $t=1$ to $T$, rearrange
terms, and drop the nonnegative term $2\left(F(\bx_{T+1})-F_{\star}\right)$
to conclude.
\end{proof}

\begin{lem}
\label{lem:SGD-core-smo+lip}Under Assumption \ref{assu:SGD} and
an additional condition $\left\Vert \nabla F(\bx)\right\Vert \leq G,\forall\bx\in\R^{d}$,
for any $T\in\N$ and $\left\{ \eta_{t}\right\} _{t=1}^{T}$, $\SGD$
(Algorithm \ref{alg:SGD}) guarantees that, for any $\nu\in\left[1,2\right]$,
\begin{align*}
\sum_{t=1}^{T}\eta_{t}\left\Vert \nabla F(\bx_{t})\right\Vert ^{2}\leq & 2\left(F(\bx_{1})-F_{\star}\right)+\frac{(2-\nu)2^{\frac{\nu}{2-\nu}}G^{2}L^{\frac{2\nu-2}{2-\nu}}}{\nu}\sum_{t=1}^{T}\eta_{t}^{\frac{\nu}{2-\nu}}\\
 & +\frac{4G^{2-\nu}L^{\nu-1}}{\nu}\sum_{t=1}^{T}\eta_{t}^{\nu}\left\Vert \bxi_{t}\right\Vert ^{\nu}-2\sum_{t=1}^{T}\eta_{t}\left\langle \bxi_{t},\nabla F(\bx_{t})\right\rangle .
\end{align*}
\end{lem}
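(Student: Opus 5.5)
Starting from $L$-upper smoothness, the one-step inequality in the proof of Lemma \ref{lem:SGD-core} gives, for every $t$,
\[
F(\bx_{t+1})\leq F(\bx_{t})-\eta_{t}\left\langle \nabla F(\bx_{t}),\bg_{t}\right\rangle +\frac{\eta_{t}^{2}L}{2}\left\Vert \bg_{t}\right\Vert ^{2}.
\]
The difference from Lemma \ref{lem:SGD-core} is that I will not expand $\left\Vert \bg_{t}\right\Vert ^{2}$ into a quadratic noise term. Instead I will bound the smoothness remainder by a $\nu$-power of the step, using the gradient bound $G$. The key intermediate claim is an interpolation of upper smoothness with Lipschitzness:
\[
\Breg_{F}(\bx,\by)\leq\frac{2G^{2-\nu}L^{\nu-1}}{\nu}\left\Vert \bx-\by\right\Vert ^{\nu}\quad\text{for }\nu\in[1,2].
\]
I expect to obtain it as follows. $L$-upper smoothness gives $\Breg_{F}\leq\frac{L}{2}r^{2}$. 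The bound $\left\Vert \nabla F\right\Vert \leq G$ gives $\Breg_{F}\leq F(\bx)-F(\by)+Gr\leq2Gr$, where $r=\left\Vert \bx-\by\right\Vert$. Then $\min\{\frac{L}{2}r^{2},2Gr\}\leq(\frac{L}{2}r^{2})^{\nu-1}(2Gr)^{2-\nu}$, which yields the constant up to a harmless factor. Matching the exact constant $4/\nu$ may require a slightly finer argument, such as integrating along the segment.

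Next I apply this with $\bx=\bx_{t+1}$, $\by=\bx_{t}$ and $\bx_{t+1}-\bx_{t}=-\eta_{t}\bg_{t}$. I split $\left\Vert \bg_{t}\right\Vert ^{\nu}\leq2^{\nu-1}(\left\Vert \nabla F(\bx_{t})\right\Vert ^{\nu}+\left\Vert \bxi_{t}\right\Vert ^{\nu})$. The noise part gives exactly the term $\frac{4G^{2-\nu}L^{\nu-1}}{\nu}\eta_{t}^{\nu}\left\Vert \bxi_{t}\right\Vert ^{\nu}$.

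The gradient part, $c\,G^{2-\nu}L^{\nu-1}\eta_{t}^{\nu}\left\Vert \nabla F(\bx_{t})\right\Vert ^{\nu}$, must be absorbed into half of $\eta_{t}\left\Vert \nabla F(\bx_{t})\right\Vert ^{2}$. For this I use Young's inequality with exponents $\frac{2}{\nu}$ and $\frac{2}{2-\nu}$, applied to the product
\[
\left(\eta_{t}^{\nu/2}\left\Vert \nabla F(\bx_{t})\right\Vert ^{\nu}\right)\cdot\left(cG^{2-\nu}L^{\nu-1}\eta_{t}^{\nu/2}\right).
\]
This gives $\frac{\eta_{t}}{2}\left\Vert \nabla F(\bx_{t})\right\Vert ^{2}$ plus a constant times $G^{2}L^{\frac{2\nu-2}{2-\nu}}\eta_{t}^{\frac{\nu}{2-\nu}}$. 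That is precisely the second term in the statement, with the factor $(2-\nu)/\nu$ and a power of $2$ coming from Young's constants. No stepsize restriction is needed, which matches the statement "for any $\{\eta_{t}\}$". The linear term contributes $-\eta_{t}\left\Vert \nabla F(\bx_{t})\right\Vert ^{2}-\eta_{t}\left\langle \bxi_{t},\nabla F(\bx_{t})\right\rangle$.

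Finally I sum over $t$, drop $F(\bx_{T+1})-F_{\star}\geq0$, and multiply by $2$. This produces $2\Delta$, the bias term, the noise term and $-2\sum\eta_{t}\left\langle \bxi_{t},\nabla F(\bx_{t})\right\rangle$.

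I expect the main obstacle to be the constant bookkeeping: the interpolation constant and the Young split must reproduce exactly $\frac{(2-\nu)2^{\frac{\nu}{2-\nu}}}{\nu}$ and $\frac{4}{\nu}$. If the crude min-product argument loses constants, I would redo the interpolation through $\Breg_{F}(\bx,\by)=\int_{0}^{1}\left\langle \nabla F(\by+s(\bx-\by))-\nabla F(\by),\bx-\by\right\rangle \d s$. The endpoint $\nu=2$ should be read as a limit, in which the bias term vanishes.
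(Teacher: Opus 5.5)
Your plan is the paper's proof. The paper packages your interpolation step as Lemma \ref{lem:general-holder}, splits $\left\Vert \bg_{t}\right\Vert ^{\nu}\leq2^{\nu-1}(\left\Vert \nabla F(\bx_{t})\right\Vert ^{\nu}+\left\Vert \bxi_{t}\right\Vert ^{\nu})$, absorbs the gradient part into $\frac{\eta_{t}}{2}\left\Vert \nabla F(\bx_{t})\right\Vert ^{2}$ by Young with exponents $\frac{2}{\nu},\frac{2}{2-\nu}$, telescopes, and doubles. What needs fixing is the constant bookkeeping, where your claims of ``exactly'' and ``precisely'' do not hold as written.

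With the interpolation constant you state, $\frac{2G^{2-\nu}L^{\nu-1}}{\nu}$, the split plus the final doubling gives a noise coefficient $\frac{2^{\nu+1}}{\nu}G^{2-\nu}L^{\nu-1}$, not $\frac{4}{\nu}G^{2-\nu}L^{\nu-1}$. The min-product shortcut gives $2^{3-2\nu}G^{2-\nu}L^{\nu-1}$. This still exceeds the required $\frac{2^{2-\nu}}{\nu}G^{2-\nu}L^{\nu-1}$ on $(1,2)$, because $2^{\nu-1}<\nu$ there.

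So the segment integral you keep as a fallback is actually necessary, and it is what the paper does. When the integrand $\left\langle \nabla F(\by+s(\bx-\by))-\nabla F(\by),\bx-\by\right\rangle$ is positive, bound it both by $2G\left\Vert \bx-\by\right\Vert$ and by $Ls\left\Vert \bx-\by\right\Vert ^{2}$. The second bound comes from upper smoothness used in both directions, $\Breg_{F}(\bx',\by)+\Breg_{F}(\by,\bx')\leq L\left\Vert \bx'-\by\right\Vert ^{2}$, which you need because Assumption \ref{assu:SGD} gives no Lipschitz gradient. Interpolating gives $(2G)^{2-\nu}L^{\nu-1}s^{\nu-1}\left\Vert \bx-\by\right\Vert ^{\nu}$, and integrating gives $\frac{(2G)^{2-\nu}L^{\nu-1}}{\nu}\left\Vert \bx-\by\right\Vert ^{\nu}$. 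With this constant, $\frac{4}{\nu}$ comes out exactly.

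For the bias term, Young yields $\frac{(2-\nu)2^{\frac{\nu}{2-\nu}}}{\nu}G^{2}L^{\frac{2\nu-2}{2-\nu}}\eta_{t}^{\frac{\nu}{2-\nu}}$ per step. After your multiplication by $2$ you get $\frac{(2-\nu)2^{\frac{2}{2-\nu}}}{\nu}$, twice the printed coefficient. The paper's own per-step inequality gives the same after doubling, and the proof of Theorem \ref{thm:SGD-exp-smo+lip} invokes the lemma with $2^{\frac{2}{2-\p}}$. So the printed $2^{\frac{\nu}{2-\nu}}$ is evidently a typo, and you should not try to match it.

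Your reading of $\nu=2$ as a limit in which the bias vanishes is only right for $\eta_{t}\leq\frac{1}{2L}$. The term equals $\frac{2(2-\nu)}{\nu}(2\eta_{t}L)^{\frac{\nu}{2-\nu}}\frac{G^{2}}{L}$, which diverges as $\nu\to2$ when $2\eta_{t}L>1$. That is consistent with the fact that absorbing $L\eta_{t}^{2}\left\Vert \nabla F(\bx_{t})\right\Vert ^{2}$ into $\frac{\eta_{t}}{2}\left\Vert \nabla F(\bx_{t})\right\Vert ^{2}$ requires $\eta_{t}\leq\frac{1}{2L}$. The paper only uses the lemma for $\p\in(1,2)$.
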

\begin{proof}
We fix $\nu\in\left[1,2\right]$ in the following proof. Given $t\in\left[T\right]$,
by Lemma \ref{lem:general-holder} and the update rule of $\SGD$,
we have
\begin{align}
F(\bx_{t+1}) & \leq F(\bx_{t})+\left\langle \nabla F(\bx_{t}),\bx_{t+1}-\bx_{t}\right\rangle +\frac{\left(2G\right)^{2-\nu}L^{\nu-1}}{\nu}\left\Vert \bx_{t+1}-\bx_{t}\right\Vert ^{\nu}\nonumber \\
 & =F(\bx_{t})-\eta_{t}\left\langle \nabla F(\bx_{t}),\bg_{t}\right\rangle +\frac{\eta_{t}^{\nu}\left(2G\right)^{2-\nu}L^{\nu-1}}{\nu}\left\Vert \bg_{t}\right\Vert ^{\nu}\nonumber \\
 & \overset{(a)}{\leq}F(\bx_{t})-\eta_{t}\left\Vert \nabla F(\bx_{t})\right\Vert ^{2}-\eta_{t}\left\langle \bxi_{t},\nabla F(\bx_{t})\right\rangle +\frac{2\eta_{t}^{\nu}G^{2-\nu}L^{\nu-1}}{\nu}\left(\left\Vert \nabla F(\bx_{t})\right\Vert ^{\nu}+\left\Vert \bxi_{t}\right\Vert ^{\nu}\right)\nonumber \\
 & \overset{(b)}{\leq}F(\bx_{t})-\frac{\eta_{t}}{2}\left\Vert \nabla F(\bx_{t})\right\Vert ^{2}+\frac{(2-\nu)2^{\frac{\nu}{2-\nu}}\eta_{t}^{\frac{\nu}{2-\nu}}G^{2}L^{\frac{2\nu-2}{2-\nu}}}{\nu}+\frac{2\eta_{t}^{\nu}G^{2-\nu}L^{\nu-1}}{\nu}\left\Vert \bxi_{t}\right\Vert ^{\nu}-\eta_{t}\left\langle \bxi_{t},\nabla F(\bx_{t})\right\rangle ,\label{eq:SGD-core-smo+lip-1}
\end{align}
where $(a)$ is due to $\left\Vert \bx+\by\right\Vert ^{\nu}\leq2^{\nu-1}\left(\left\Vert \bx\right\Vert ^{\nu}+\left\Vert \by\right\Vert ^{\nu}\right),\forall\bx,\by\in\R^{d}$,
and $(b)$ is by
\begin{align*}
 & \frac{2\eta_{t}^{\nu}G^{2-\nu}L^{\nu-1}\left\Vert \nabla F(\bx_{t})\right\Vert ^{\nu}}{\nu}=\frac{2\eta_{t}^{\frac{\nu}{2}}G^{2-\nu}L^{\nu-1}}{\nu^{1-\frac{\nu}{2}}}\cdot\frac{\eta_{t}^{\frac{\nu}{2}}\left\Vert \nabla F(\bx_{t})\right\Vert ^{\nu}}{\nu^{\frac{\nu}{2}}}\\
\overset{(c)}{\leq} & \frac{\left(2\eta_{t}^{\frac{\nu}{2}}G^{2-\nu}L^{\nu-1}/\nu^{1-\frac{\nu}{2}}\right)^{\frac{2}{2-\nu}}}{2/(2-\nu)}+\frac{\left(\eta_{t}^{\frac{\nu}{2}}\left\Vert \nabla F(\bx_{t})\right\Vert ^{\nu}/\nu^{\frac{\nu}{2}}\right)^{\frac{2}{\nu}}}{2/\nu}=\frac{(2-\nu)2^{\frac{\nu}{2-\nu}}\eta_{t}^{\frac{\nu}{2-\nu}}G^{2}L^{\frac{2\nu-2}{2-\nu}}}{\nu}+\frac{\eta_{t}\left\Vert \nabla F(\bx_{t})\right\Vert ^{2}}{2},
\end{align*}
in which $(c)$ follows from Young's inequality. Finally, we sum (\ref{eq:SGD-core-smo+lip-1})
from $t=1$ to $T$, rearrange terms, and drop the nonnegative term
$2\left(F(\bx_{T+1})-F_{\star}\right)$ to conclude.
\end{proof}

\begin{lem}
\label{lem:SGDM-core}Under Assumption \ref{assu:SGDM}, for any $T\in\N$
and nonincreasing $\left\{ \eta_{t}\leq\frac{(1-\beta)^{2}}{2L}\right\} _{t=1}^{T}$,
$\SGDM$ (Algorithm \ref{alg:SGDM}) guarantees that
\begin{align*}
 & \sum_{t=1}^{T}\gamma_{t}\left\Vert \nabla F(\bx_{t})\right\Vert ^{2}+2\sum_{t=1}^{T}\gamma_{t}\left\Vert \nabla F(\bz_{t})\right\Vert ^{2}\\
\leq & 4\left(F(\bx_{1})-F_{\star}\right)+2L\sum_{t=1}^{T}\gamma_{t}^{2}\left\Vert \bxi_{t}\right\Vert ^{2}+2\beta^{2}(1-\beta)L\sum_{t=1}^{T}\left\Vert \sum_{s=1}^{t-1}\beta^{t-1-s}\gamma_{s}\bxi_{s}\right\Vert ^{2}\\
 & +4L\sum_{t=1}^{T}\gamma_{t}^{2}\left\langle \bxi_{t},\nabla F(\bx_{t})\right\rangle -4\sum_{t=1}^{T}\gamma_{t}\left\langle \bxi_{t},\nabla F(\bz_{t})\right\rangle ,
\end{align*}
where $\left\{ \bz_{t}\right\} _{t=1}^{T+1}$ is a sequence defined
in (\ref{eq:SGDM-core-z}) and $\left\{ \gamma_{t}\right\} _{t=1}^{T}$
is a sequence defined in (\ref{eq:SGDM-core-z-update}).
\end{lem}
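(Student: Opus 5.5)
The plan is to run the standard ``shadow sequence'' analysis of heavy-ball momentum. I would carry it out pathwise, before taking any expectation, so that the only noise-dependent quantities left are exactly those on the right-hand side of the claim. Define
\[
\bz_{t}\defeq\frac{\bx_{t}-\beta\bx_{t-1}}{1-\beta},\qquad\gamma_{t}\defeq\frac{\eta_{t}}{1-\beta}.
\]
The first step is to verify two algebraic identities from the update rule of Algorithm \ref{alg:SGDM} with $\bx_{0}=\bx_{1}$. The update gives $\bx_{t+1}-\beta\bx_{t}=\bx_{t}-\beta\bx_{t-1}-\eta_{t}\bg_{t}$, hence $\bz_{t+1}=\bz_{t}-\gamma_{t}\bg_{t}$ and $\bz_{1}=\bx_{1}$. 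Unrolling $\bx_{t}-\bx_{t-1}$ gives
\[
\bz_{t}-\bx_{t}=\frac{\beta(\bx_{t}-\bx_{t-1})}{1-\beta}=-\beta\sum_{s=1}^{t-1}\beta^{t-1-s}\gamma_{s}\bg_{s}.
\]

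Next, I would apply $L$-smoothness along the $\bz$-sequence:
\[
F(\bz_{t+1})\leq F(\bz_{t})-\gamma_{t}\left\langle \nabla F(\bz_{t}),\bg_{t}\right\rangle +\frac{L\gamma_{t}^{2}}{2}\left\Vert \bg_{t}\right\Vert ^{2}.
\]
In this inequality I split $\bg_{t}=\nabla F(\bx_{t})+\bxi_{t}$.
\begin{itemize}
\item For the inner product, I use $\left\langle \nabla F(\bz_{t}),\nabla F(\bx_{t})\right\rangle =\frac{1}{2}\left(\left\Vert \nabla F(\bz_{t})\right\Vert ^{2}+\left\Vert \nabla F(\bx_{t})\right\Vert ^{2}-\left\Vert \nabla F(\bz_{t})-\nabla F(\bx_{t})\right\Vert ^{2}\right)$, together with $\left\Vert \nabla F(\bz_{t})-\nabla F(\bx_{t})\right\Vert \leq L\left\Vert \bz_{t}-\bx_{t}\right\Vert $.
\item For the quadratic term, I expand $\left\Vert \bg_{t}\right\Vert ^{2}$ exactly into $\left\Vert \nabla F(\bx_{t})\right\Vert ^{2}+\left\Vert \bxi_{t}\right\Vert ^{2}+2\left\langle \bxi_{t},\nabla F(\bx_{t})\right\rangle $. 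This produces the cross term $L\gamma_{t}^{2}\left\langle \bxi_{t},\nabla F(\bx_{t})\right\rangle $ and the linear term $-\gamma_{t}\left\langle \bxi_{t},\nabla F(\bz_{t})\right\rangle $ of the claim.
\end{itemize}
Since $\gamma_{t}\leq\frac{1-\beta}{2L}\leq\frac{1}{2L}$, the term $\frac{L\gamma_{t}^{2}}{2}\left\Vert \nabla F(\bx_{t})\right\Vert ^{2}$ is at most $\frac{\gamma_{t}}{4}\left\Vert \nabla F(\bx_{t})\right\Vert ^{2}$ and is absorbed. After multiplying by $4$ and summing, the remaining error is $2L^{2}\sum_{t}\gamma_{t}\left\Vert \bz_{t}-\bx_{t}\right\Vert ^{2}$. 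The telescoping sum is bounded by $F(\bx_{1})-F_{\star}$.

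The main obstacle is controlling $\sum_{t}\gamma_{t}L^{2}\left\Vert \bz_{t}-\bx_{t}\right\Vert ^{2}$. I would split it with $\left\Vert a+b\right\Vert ^{2}\leq2\left\Vert a\right\Vert ^{2}+2\left\Vert b\right\Vert ^{2}$ into a gradient part $\beta\sum_{s<t}\beta^{t-1-s}\gamma_{s}\nabla F(\bx_{s})$ and a noise part $\beta\sum_{s<t}\beta^{t-1-s}\gamma_{s}\bxi_{s}$.
\begin{itemize}
\item \emph{Noise part.} I bound $L^{2}\gamma_{t}\leq\frac{(1-\beta)L}{2}$ and leave the norm inside. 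Tracking constants, this should give exactly the term $2\beta^{2}(1-\beta)L\sum_{t}\left\Vert \sum_{s=1}^{t-1}\beta^{t-1-s}\gamma_{s}\bxi_{s}\right\Vert ^{2}$.
\item \emph{Gradient part.} I apply Jensen's inequality with weights $\beta^{t-1-s}$, whose total is at most $\frac{1}{1-\beta}$:
\[
\left\Vert \sum_{s<t}\beta^{t-1-s}\gamma_{s}\nabla F(\bx_{s})\right\Vert ^{2}\leq\frac{1}{1-\beta}\sum_{s<t}\beta^{t-1-s}\gamma_{s}^{2}\left\Vert \nabla F(\bx_{s})\right\Vert ^{2}.
\]
I then use monotonicity $\gamma_{t}\leq\gamma_{s}$ for $s<t$ and swap the order of summation, which yields another factor $\frac{1}{1-\beta}$. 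The result is at most $\frac{L^{2}\beta^{2}}{(1-\beta)^{2}}\sum_{s}\gamma_{s}^{3}\left\Vert \nabla F(\bx_{s})\right\Vert ^{2}$.
\end{itemize}
With $\gamma_{s}\leq\frac{1-\beta}{2L}$, the gradient part becomes $\lesssim\beta^{2}\sum_{s}\gamma_{s}\left\Vert \nabla F(\bx_{s})\right\Vert ^{2}$ with a small constant. I need to check that this constant, together with the $\frac{1}{4}$ already used, still leaves coefficient $1$ on $\sum_{t}\gamma_{t}\left\Vert \nabla F(\bx_{t})\right\Vert ^{2}$ and coefficient $2$ on $\sum_{t}\gamma_{t}\left\Vert \nabla F(\bz_{t})\right\Vert ^{2}$. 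If the constants are too tight, the fallback is the sharper condition $\eta_{t}\leq\frac{(1-\beta)(1-\beta^{2})}{2L(1+\beta^{2})}$ mentioned in the remark after Theorem \ref{thm:SGDM-exp}. Finally, I drop $F(\bz_{T+1})-F_{\star}\geq0$ and rearrange to obtain the claim.
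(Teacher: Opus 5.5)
Your route is the paper's route step for step: the shadow sequence $\bz_t$, $L$-smoothness along $\bz$, the polarization identity, the split of $\bz_t-\bx_t$ into a gradient part and a noise part, Cauchy--Schwarz with geometric weights, and monotonicity of $\gamma_t$ plus a swap of summation. Your noise constant is also right: $4\beta^2L^2\gamma_t\le 2\beta^2(1-\beta)L$.

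There is a concrete problem in the constant check you left open, and as you set it up it fails. You absorb $\frac{L\gamma_t^2}{2}\|\nabla F(\bx_t)\|^2$ using only $\gamma_t\le\frac{1}{2L}$, which certifies only $\frac{\gamma_t}{4}$ as the coefficient on $\|\nabla F(\bx_t)\|^2$. The gradient part of the momentum error then costs up to $\frac{\beta^2L^2\gamma_t^3}{(1-\beta)^2}\le\frac{\beta^2\gamma_t}{4}$. That leaves $\frac{(1-\beta^2)\gamma_t}{4}$, so after multiplying by $4$ you can only certify $1-\beta^2$ in front of $\sum_t\gamma_t\|\nabla F(\bx_t)\|^2$, not $1$.

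The fix is to keep the factor $1-\beta$ in the first absorption too. Then
$\frac{L\gamma_t}{2}+\frac{\beta^2L^2\gamma_t^2}{(1-\beta)^2}\le\frac{1-\beta}{4}+\frac{\beta^2}{4}\le\frac14$,
because $\beta^2\le\beta$. This is exactly the paper's final check, and it leaves the required $\frac{\gamma_t}{4}$.

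Your fallback is not available. First, the hypothesis $\eta_t\le\frac{(1-\beta)^2}{2L}$ is part of the statement, so you cannot replace it. Second, the condition in the remark is weaker, not sharper:
$\frac{(1-\beta)(1-\beta^2)}{2L(1+\beta^2)}=\frac{(1-\beta)^2}{2L}\cdot\frac{1+\beta}{1+\beta^2}\ge\frac{(1-\beta)^2}{2L}$.
Under that larger stepsize, $4\beta^2L^2\gamma_t$ is no longer bounded by $2\beta^2(1-\beta)L$, so the stated noise constant would break.
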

\begin{proof}
Inspired by \citep{7330562,NEURIPS2020_d3f5d4de}, we first introduce
an auxiliary sequence $\left\{ \bz_{t}\right\} _{t=1}^{T+1}$ defined
as
\begin{equation}
\bz_{t}\defeq\frac{\bx_{t}-\beta\bx_{t-1}}{1-\beta},\forall t\in\left[T+1\right],\label{eq:SGDM-core-z}
\end{equation}
which, combined with the update rule of $\SGDM$, implies that for
any $t\in\left[T\right]$,
\begin{eqnarray}
\bz_{t+1}=\bz_{t}-\gamma_{t}\bg_{t}, & \text{where} & \gamma_{t}\defeq\frac{\eta_{t}}{1-\beta}.\label{eq:SGDM-core-z-update}
\end{eqnarray}
A useful fact that will be applied in the later analysis is that
\[
\bz_{t}-\bx_{t}\overset{(\ref{eq:SGDM-core-z})}{=}\frac{\beta}{1-\beta}\left(\bx_{t}-\bx_{t-1}\right)=-\frac{\beta}{1-\beta}\sum_{s=1}^{t-1}\beta^{t-1-s}\eta_{s}\bg_{s}\overset{(\ref{eq:SGDM-core-z-update})}{=}-\beta\sum_{s=1}^{t-1}\beta^{t-1-s}\gamma_{s}\bg_{s},
\]
where the second equation is obtained by expanding the update rule
of $\SGDM$.

Given $t\in\left[T\right]$, by smoothness of $F$, we have
\begin{align}
F(\bz_{t+1})\leq & F(\bz_{t})+\left\langle \nabla F(\bz_{t}),\bz_{t+1}-\bz_{t}\right\rangle +\frac{L}{2}\left\Vert \bz_{t+1}-\bz_{t}\right\Vert ^{2}\nonumber \\
\overset{(\ref{eq:SGDM-core-z-update})}{=} & F(\bz_{t})-\gamma_{t}\left\langle \nabla F(\bz_{t}),\bg_{t}\right\rangle +\frac{\gamma_{t}^{2}L}{2}\left\Vert \bg_{t}\right\Vert ^{2}\nonumber \\
= & F(\bz_{t})-\gamma_{t}\left\langle \nabla F(\bz_{t}),\nabla F(\bx_{t})\right\rangle +\frac{\gamma_{t}^{2}L}{2}\left\Vert \nabla F(\bx_{t})\right\Vert ^{2}\nonumber \\
 & +\frac{\gamma_{t}^{2}L}{2}\left\Vert \bxi_{t}\right\Vert ^{2}+\gamma_{t}^{2}L\left\langle \bxi_{t},\nabla F(\bx_{t})\right\rangle -\gamma_{t}\left\langle \bxi_{t},\nabla F(\bz_{t})\right\rangle \nonumber \\
= & F(\bz_{t})+\frac{\gamma_{t}^{2}L-\gamma_{t}}{2}\left\Vert \nabla F(\bx_{t})\right\Vert ^{2}-\frac{\gamma_{t}}{2}\left\Vert \nabla F(\bz_{t})\right\Vert ^{2}+\frac{\gamma_{t}}{2}\left\Vert \nabla F(\bz_{t})-\nabla F(\bx_{t})\right\Vert ^{2}\nonumber \\
 & +\frac{\gamma_{t}^{2}L}{2}\left\Vert \bxi_{t}\right\Vert ^{2}+\gamma_{t}^{2}L\left\langle \bxi_{t},\nabla F(\bx_{t})\right\rangle -\gamma_{t}\left\langle \bxi_{t},\nabla F(\bz_{t})\right\rangle .\label{eq:SGDM-core-1}
\end{align}
Note that, by smoothness of $F$,
\begin{align*}
\left\Vert \nabla F(\bz_{t})-\nabla F(\bx_{t})\right\Vert  & \leq L\left\Vert \bz_{t}-\bx_{t}\right\Vert \overset{(\ref{eq:SGDM-core-z-update})}{=}\beta L\left\Vert \sum_{s=1}^{t-1}\beta^{t-1-s}\gamma_{s}\bg_{s}\right\Vert =\beta L\left\Vert \sum_{s=1}^{t-1}\beta^{t-1-s}\gamma_{s}\left(\nabla F(\bx_{s})+\bxi_{s}\right)\right\Vert \\
 & \leq\beta L\left\Vert \sum_{s=1}^{t-1}\beta^{t-1-s}\gamma_{s}\nabla F(\bx_{s})\right\Vert +\beta L\left\Vert \sum_{s=1}^{t-1}\beta^{t-1-s}\gamma_{s}\bxi_{s}\right\Vert ,
\end{align*}
which implies that
\begin{align}
\left\Vert \nabla F(\bz_{t})-\nabla F(\bx_{t})\right\Vert ^{2} & \leq2\beta^{2}L^{2}\left\Vert \sum_{s=1}^{t-1}\beta^{t-1-s}\gamma_{s}\nabla F(\bx_{s})\right\Vert ^{2}+2\beta^{2}L^{2}\left\Vert \sum_{s=1}^{t-1}\beta^{t-1-s}\gamma_{s}\bxi_{s}\right\Vert ^{2}\nonumber \\
 & \leq2\beta^{2}L^{2}\left(\sum_{s^{\prime}=1}^{t-1}\beta^{t-1-s^{\prime}}\right)\sum_{s=1}^{t-1}\beta^{t-1-s}\gamma_{s}^{2}\left\Vert \nabla F(\bx_{s})\right\Vert ^{2}+2\beta^{2}L^{2}\left\Vert \sum_{s=1}^{t-1}\beta^{t-1-s}\gamma_{s}\bxi_{s}\right\Vert ^{2}\nonumber \\
 & \leq\frac{2\beta^{2}L^{2}}{1-\beta}\sum_{s=1}^{t-1}\beta^{t-1-s}\gamma_{s}^{2}\left\Vert \nabla F(\bx_{s})\right\Vert ^{2}+2\beta^{2}L^{2}\left\Vert \sum_{s=1}^{t-1}\beta^{t-1-s}\gamma_{s}\bxi_{s}\right\Vert ^{2}.\label{eq:SGDM-core-2}
\end{align}
We combine (\ref{eq:SGDM-core-1}) and (\ref{eq:SGDM-core-2}), sum
the obtained inequality from $t=1$ to $T$, use $\bz_{1}=\bx_{1}$,
apply the nonincreasing property of $\left\{ \eta_{t}\right\} _{t=1}^{T}$,
rearrange terms, and drop the nonnegative term $F(\bz_{T+1})-F_{\star}$
to obtain that
\begin{align}
 & \sum_{t=1}^{T}\left(\frac{\gamma_{t}}{2}-\frac{\gamma_{t}^{2}L}{2}-\frac{\beta^{2}\gamma_{t}^{3}L^{2}}{(1-\beta)^{2}}\right)\left\Vert \nabla F(\bx_{t})\right\Vert ^{2}+\frac{1}{2}\sum_{t=1}^{T}\gamma_{t}\left\Vert \nabla F(\bz_{t})\right\Vert ^{2}\nonumber \\
\leq & F(\bx_{1})-F_{\star}+\frac{L}{2}\sum_{t=1}^{T}\gamma_{t}^{2}\left\Vert \bxi_{t}\right\Vert ^{2}+\beta^{2}L^{2}\sum_{t=1}^{T}\gamma_{t}\left\Vert \sum_{s=1}^{t-1}\beta^{t-1-s}\gamma_{s}\bxi_{s}\right\Vert ^{2}\nonumber \\
 & +L\sum_{t=1}^{T}\gamma_{t}^{2}\left\langle \bxi_{t},\nabla F(\bx_{t})\right\rangle -\sum_{t=1}^{T}\gamma_{t}\left\langle \bxi_{t},\nabla F(\bz_{t})\right\rangle .\label{eq:SGDM-core-3}
\end{align}
Observe that, by the requirement of $\eta_{t}\leq\frac{(1-\beta)^{2}}{2L},\forall t\in\left[T\right]$,
\begin{eqnarray*}
\gamma_{t}=\frac{\eta_{t}}{1-\beta}\leq\frac{1-\beta}{2L} & \text{and} & \frac{\gamma_{t}L}{2}+\frac{\beta^{2}\gamma_{t}^{2}L^{2}}{(1-\beta)^{2}}\leq\frac{1-\beta}{4}+\frac{\beta^{2}}{4}\overset{\beta<1}{\leq}\frac{1}{4}.
\end{eqnarray*}
Plugging the above inequality into (\ref{eq:SGDM-core-3}) and multiplying
both sides by $4$ completes the proof.
\end{proof}

\section{A General Lemma}
\begin{lem}
\label{lem:general-holder}Given a differentiable function $F:\R^{d}\to\R$
that is upper $L$-smooth and $G$-Lipschitz w.r.t. a general norm
$\left\Vert \cdot\right\Vert $, i.e., $\Breg_{F}(\bx,\by)\leq\frac{L}{2}\left\Vert \bx-\by\right\Vert ^{2}$
and $\left\Vert \nabla F(\bx)\right\Vert _{\star}\leq G$, then we
have
\[
F(\bx)\leq F(\by)+\left\langle \nabla F(\by),\bx-\by\right\rangle +\frac{\left(2G\right)^{2-\nu}L^{\nu-1}}{\nu}\left\Vert \bx-\by\right\Vert ^{\nu},\forall\bx,\by\in\R^{d},\nu\in\left[1,2\right].
\]
\end{lem}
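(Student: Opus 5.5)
We need to prove Lemma \ref{lem:general-holder}: for $\nu\in[1,2]$,
\[
F(\bx)-F(\by)-\left\langle \nabla F(\by),\bx-\by\right\rangle \le \frac{(2G)^{2-\nu}L^{\nu-1}}{\nu}\left\Vert \bx-\by\right\Vert ^{\nu}.
\]
Write $\Breg_F(\bx,\by)$ for the left-hand side and $r\defeq\left\Vert \bx-\by\right\Vert$. The plan is to obtain two separate upper bounds on $\Breg_F(\bx,\by)$, one quadratic in $r$ and one linear in $r$. We then combine them by taking a minimum and interpolating with exponent $\nu$. The only subtlety is the constant $1/\nu$.

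\textbf{Two bounds.} The quadratic bound $\Breg_F(\bx,\by)\le\frac{L}{2}r^{2}$ is exactly the upper-smoothness assumption. For the linear bound, write $F(\bx)-F(\by)=\int_0^1\langle\nabla F(\by+\tau(\bx-\by)),\bx-\by\rangle\,\d\tau$. Since $\left\Vert\nabla F\right\Vert_\star\le G$, this gives $|F(\bx)-F(\by)|\le Gr$. Also $|\langle\nabla F(\by),\bx-\by\rangle|\le Gr$. Hence $\Breg_F(\bx,\by)\le 2Gr$, and so $\Breg_F(\bx,\by)\le\min\{\frac{L}{2}r^2,\,2Gr\}$.

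\textbf{Interpolation.} For $\nu\in[1,2]$, use $\min\{a,b\}\le a^{\nu-1}b^{2-\nu}$ with $a=\frac{L}{2}r^2$ and $b=2Gr$. This gives
\[
\Breg_F(\bx,\by)\le\left(\tfrac{L}{2}\right)^{\nu-1}(2G)^{2-\nu}r^{2(\nu-1)+(2-\nu)}=2^{1-\nu}(2G)^{2-\nu}L^{\nu-1}r^{\nu}.
\]
It remains to check $2^{1-\nu}\le 1/\nu$ on $[1,2]$, i.e.\ $\nu\le 2^{\nu-1}$. Both sides are equal at $\nu=1$ and $\nu=2$, and $2^{\nu-1}$ is convex while $\nu$ is linear, so the convex function lies below the chord. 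That inequality goes the wrong way for us.

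So the main obstacle is the constant, and plain min-interpolation is lossy at intermediate $\nu$ (e.g.\ at $\nu=3/2$ we have $2^{-1/2}\approx0.707>2/3$). The first thing I would try is to integrate the pointwise minimum rather than minimizing after integration. Write $\Breg_F(\bx,\by)=\int_0^1\langle\nabla F(\by+\tau(\bx-\by))-\nabla F(\by),\bx-\by\rangle\,\d\tau$ and bound the integrand at parameter $\tau$ by $\min\{2G r,\,\cdot\}$.

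The obstacle here is that upper smoothness alone does not give a pointwise gradient-Lipschitz bound $L\tau r^2$ (cf.\ the remark after Assumption \ref{assu:SGD}). Instead I would apply upper smoothness on the segment: $\phi(\tau)\defeq\Breg_F(\by+\tau(\bx-\by),\by)$ satisfies $\phi(\tau)\le\frac{L}{2}\tau^2r^2$, and its derivative is $\phi'(\tau)=\langle\nabla F(\by+\tau(\bx-\by))-\nabla F(\by),\bx-\by\rangle\le 2Gr$. I would then show $\phi(1)\le\int_0^1\min\{L\tau r^2,2Gr\}\,\d\tau$. Granting this, one computes the integral and then checks it is at most $\frac{(2G)^{2-\nu}L^{\nu-1}}{\nu}r^\nu$ via the pointwise interpolation $\min\{L\tau r^2,2Gr\}\le(L\tau r^2)^{\nu-1}(2Gr)^{2-\nu}$. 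Integrating $\tau^{\nu-1}$ over $[0,1]$ yields exactly the factor $1/\nu$, matching the claimed constant.

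Establishing the integral bound from the two one-sided facts (the quadratic envelope on $\phi$ and the slope cap on $\phi'$) is the delicate step. It is done by a comparison argument on $\phi$. If that fails, I would fall back to proving a slightly weaker constant, $2^{1-\nu}$ in place of $1/\nu$. That constant only changes absolute constants, which are harmless for Lemma \ref{lem:SGD-core-smo+lip} and Theorem \ref{thm:SGD-exp-smo+lip} since everything there is stated up to $\lesssim$.
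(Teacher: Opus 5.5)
Your proposal is correct, and the step you call delicate is immediate. Put $\tau_0\defeq\min\{1,2G/(Lr)\}$, with the obvious conventions when $Lr=0$. The envelope gives $\phi(\tau_0)\le\frac{L}{2}\tau_0^{2}r^{2}$. The mean value theorem with the slope cap $\phi'\le 2Gr$ gives $\phi(1)-\phi(\tau_0)\le 2Gr(1-\tau_0)$. Hence
\[
\phi(1)\le\tfrac{L}{2}\tau_0^{2}r^{2}+2Gr(1-\tau_0)=\int_0^1\min\{L\tau r^{2},2Gr\}\,\d\tau,
\]
since on $[0,1]$ the minimum equals $L\tau r^{2}$ for $\tau\le\tau_0$ and $2Gr$ for $\tau\ge\tau_0$. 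Your pointwise interpolation and $\int_0^1\tau^{\nu-1}\d\tau=1/\nu$ then finish the proof. So the fallback constant $2^{1-\nu}$, which would not give the lemma as stated, is never needed.

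The paper reaches the same integral by a different route: it bounds the integrand $\phi'(\tau)$ pointwise and applies the fundamental theorem of calculus. Your belief that upper smoothness yields no such pointwise bound is mistaken for the one-sided quantity that matters. Adding $\Breg_F(\bz,\by)\le\frac{L}{2}\|\bz-\by\|^{2}$ and $\Breg_F(\by,\bz)\le\frac{L}{2}\|\bz-\by\|^{2}$ gives $\langle\nabla F(\bz)-\nabla F(\by),\bz-\by\rangle\le L\|\bz-\by\|^{2}$. For $\bz=\by+\tau(\bx-\by)$ this is exactly $\phi'(\tau)\le L\tau r^{2}$. What upper smoothness does not give (the remark after Assumption \ref{assu:SGD}) is only the two-sided bound $\|\nabla F(\bz)-\nabla F(\by)\|_\star\le L\|\bz-\by\|$. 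The paper then interpolates $\phi'(\tau)\le\min\{L\tau r^{2},2Gr\}$ pointwise and integrates.

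The paper's route is a one-line pointwise argument. Yours never uses the swapped inequality, only upper smoothness anchored at $\by$, and it replaces integrating $\phi'$ with one application of the mean value theorem. It is therefore slightly more elementary and holds under a marginally weaker hypothesis, at the price of the case split at $\tau_0$.
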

\begin{proof}
Given $\bx,\by\in\R^{d}$, $\nu\in\left[1,2\right]$, and $t\in\left[0,1\right]$,
we claim there is always
\begin{equation}
\left\langle \nabla F(\by+t(\bx-\by))-\nabla F(\by),\bx-\by\right\rangle \leq\left(2G\right)^{2-\nu}L^{\nu-1}\left\Vert \bx-\by\right\Vert ^{\nu}t^{\nu-1}.\label{eq:general-holder-1}
\end{equation}
If $\left\langle \nabla F(\by+t(\bx-\by))-\nabla F(\by),\bx-\by\right\rangle \leq0$,
the above inequality clearly holds. Otherwise, we have
\begin{align*}
\left\langle \nabla F(\by+t(\bx-\by))-\nabla F(\by),\bx-\by\right\rangle  & =\frac{\left\langle \nabla F(\by+t(\bx-\by))-\nabla F(\by),t(\bx-\by)\right\rangle }{t}\\
 & \leq\frac{\left(2G\left\Vert t(\bx-\by)\right\Vert \right)^{2-\nu}\left(L\left\Vert t(\bx-\by)\right\Vert ^{2}\right)^{\nu-1}}{t}\\
 & =\left(2G\right)^{2-\nu}L^{\nu-1}\left\Vert \bx-\by\right\Vert ^{\nu}t^{\nu-1},
\end{align*}
where the second to last step is due to, by $G$-Lipschitzness,
\[
\left\langle \nabla F(\by+t(\bx-\by))-\nabla F(\by),t(\bx-\by)\right\rangle \leq\left\Vert \nabla F(\by+t(\bx-\by))-\nabla F(\by)\right\Vert _{\star}\left\Vert t(\bx-\by)\right\Vert \leq2G\left\Vert t(\bx-\by)\right\Vert ,
\]
and by upper $L$-smoothness,
\[
\left\langle \nabla F(\by+t(\bx-\by))-\nabla F(\by),t(\bx-\by)\right\rangle \leq L\left\Vert t(\bx-\by)\right\Vert ^{2}.
\]

Now, for any $\bx,\by\in\R^{d}$ and $\nu\in\left[1,2\right]$, by
the First Fundamental Theorem of Calculus,
\begin{align*}
F(\bx)-F(\by) & =\int_{0}^{1}\left\langle \nabla F(\by+t(\bx-\by)),\bx-\by\right\rangle \d t\\
 & =\left\langle \nabla F(\by),\bx-\by\right\rangle +\int_{0}^{1}\left\langle \nabla F(\by+t(\bx-\by))-\nabla F(\by),\bx-\by\right\rangle \d t\\
 & \overset{(\ref{eq:general-holder-1})}{\leq}\left\langle \nabla F(\by),\bx-\by\right\rangle +\left(2G\right)^{2-\nu}L^{\nu-1}\left\Vert \bx-\by\right\Vert ^{\nu}\int_{0}^{1}t^{\nu-1}\d t\\
 & \leq\left\langle \nabla F(\by),\bx-\by\right\rangle +\frac{\left(2G\right)^{2-\nu}L^{\nu-1}}{\nu}\left\Vert \bx-\by\right\Vert ^{\nu}.
\end{align*}
\end{proof}

\end{document}